\def\eps{\varepsilon}
\def\e{{\rm e}}
\def\dist{{\rm dist}}
\def\kur{{\rm kur}}
\def\dim{{\rm dim}}
\def\div{{\rm div}}
\def\d{{\rm d}}
\def\ddt {{\frac{\d}{\d t}}}
\def \l {\langle}
\def \r {\rangle}
\def\de{{\partial}}
\def\Q {{\mathcal Q}}
\def\R {{\mathbb R}}
\def\A {{\mathcal A}}
\def\E {{\mathcal E}}
\def\N {{\mathbb N}}
\def\H {{\mathbb H}}
\def\V {{\mathbb V}}
\def\B {{\mathbb B}}
\def\D {{\mathbb D}}
\def\K {{\mathbb K}}
\def\KK {{\mathcal K}}
\def\dom {{\mathcal D}}
\newtheorem{proposition}{Proposition}[section]
\newtheorem{theorem}[proposition]{Theorem}
\newtheorem{corollary}[proposition]{Corollary}
\newtheorem{lemma}[proposition]{Lemma}
\theoremstyle{definition}
\newtheorem{remark}[proposition]{Remark}
\numberwithin{equation}{section}
\title[Singular limits of Voigt models in fluid dynamics]
{Singular limits of Voigt models in fluid dynamics}
\author[M. Coti Zelati and C.G. Gal ]
{Michele Coti Zelati and Ciprian G. Gal}
\address{Department of Mathematics, University of Maryland
\newline\indent   College Park, MD 20742, USA}
\email{micotize@umd.edu {\rm (M.\ Coti Zelati)}}
\address{Florida International University - Department of Mathematics
\newline\indent Miami, FL 33199, USA}
\email{cgal@fiu.edu {\rm (C.G.\ Gal)}}
\subjclass[2000]{37L30, 35Q35, 35Q30, 35B40}
\keywords{Navier-Stokes-Voigt equations, global attractor, exponential attractor, regularization
of the Navier-Stokes equations, turbulence models, viscoelastic models}
\begin{document}

\begin{abstract}
We investigate the long-term behavior, as a certain regularization parameter vanishes, of the
three-dimensional Navier-Stokes-Voigt model of a viscoelastic incompressible fluid. We prove the existence
of global and exponential attractors of optimal regularity.  We then derive explicit upper bounds for
the dimension of these attractors in terms of the three-dimensional Grashof number and the regularization
parameter. Finally, we also prove convergence of the (strong) global attractor of the 3D Navier-Stokes-Voigt
model to the (weak) global attractor of the 3D Navier-Stokes equation. Our analysis improves
and extends recent results obtained by Kalantarov and Titi in \cite{KT}.
\end{abstract}

\maketitle


\section{Introduction}
\noindent Regularized fluid equations in hydrodynamics play a key role in understanding turbulent phenomena
in science. In recent years, many regularized equations have been proposed for the purpose of direct numerical
simulations of turbulent incompressible flows modeled by the Navier-Stokes (NSE) equations \cite{HLT}
(for models of two-phase incompressible fluid flows, we refer the reader to \cite{GMe} for further discussion and results).
One such regularized model is the Navier-Stokes-Voigt (NSV) equations introduced by Oskolkov \cite{Osk1} as a
model for the motion of a linear, viscoelastic, incompressible fluid.

Unlike the 3D Navier-Stokes equations, existence and uniqueness of globally-defined smooth solutions can
be rigorously proven for the 3D NSV equations, as well as the fact that the latter recovers the NSE in the limit
as a certain length scale $\alpha$ goes to zero \cites{KT, HLT}. This property was already noted for other
important regularized models such as the Navier-Stokes $\alpha$-model and the Leray $\alpha$-model \cites{CTV, CTV2}.
Moreover, the robust analytical properties of the 3D NSV model ensure the computability of solutions and
the stability of numerical schemes. Finally, there is evidence that the 3D NSV with a small regularization
parameter enjoys similar statistical properties as the 3D NSE \cites{KLT, RT}. In this connection, understanding
the long-time behavior of solutions to the 3D NSV equation is crucial. The global existence and uniqueness of
strong solutions for the 3D NSV model is by now a classical result \cite{Osk1}. The existence of the global attractor
of optimal regularity as well as an upper bound on its dimension were established in \cite{KT}. In the same paper,
estimates for the number of asymptotic determining modes of solutions of the 3D NSV problem were also derived.

Our objective in the present contribution is to establish the following results: (a) the existence of exponential
attractors $\E_\alpha$ for the 3D NSV model and (b) provide explicit upper bounds on their fractal dimension;
(c) motivated by comparable results established in \cites{CTV, CTV2} for other regularized models, prove
convergence results for the corresponding global attractors $\A_\alpha$ as the regularization parameter $\alpha$
goes to zero. We recall that the exponential attractor always contains the global attractor and also attracts bounded
subsets of the energy phase-space at an exponential rate, which makes it a more useful object in numerical simulations
than the global attractor.

While the first issue (a) has already been treated in \cite{GMe}*{Section 6} to some extent
as a special case of a general family of regularized models, in this contribution we also wanted to rephrase the results
associated with the 3D NSV into $\alpha$-dependent spaces, in order to simplify the handling of the dependence on the
parameter $\alpha>0$. One feature of this analysis is that we are able to obtain the optimal regularity of the global attractor
in \emph{one} step as opposed to employing a complicated bootstrapping procedure as in \cite{KT}. We emphasize that
the former procedure has proved quite useful in the treatment of other regularized models for the 3D NSE (cf. \cite{GMe}*{Section 6}).

Concerning the second issue (b), the best (up to date) estimate for the fractal dimension of the global attractor was found in \cite{KT}.
There it was shown that it is asymptotically growing like $c\alpha^{-6}$, for some constant $c>0$ and thus it blows up
as $\alpha$ goes to zero. Our present improvement consists in deriving a better upper bound for the dimension of both global and exponential attractors associated with the 3D NSV model. Based on the
Constantin-Foias trace formula \cite{LADY}, we find an upper bound for the fractal (box-counting) dimension of the global
attractor growing like $c\alpha^{-3}$, where $c$ is an explicitly computable constant \emph{independent} of $\alpha$. 
We refer the reader to Section 5 and to Remark \ref{important} for further details.

Following a scheme introduced by Eden, Foias and Kalantarov \cite{Eden2}, we finally show that the 3D NSV model has
an exponential attractor $\E_\alpha$ which admits the same fractal dimension estimate as for the corresponding global
attractor $\A_\alpha$. These results further justify the use of the Navier-Stokes-Voigt equations as an inviscid regularization
of the 3D NSE, in particular for numerical computations and simulations. For the final issue (c), based on the concept of multivalued
semiflows \cites{CZ,MV}, we derive results on the convergence of the (strong) global attractors for the Voigt model to the (weak) global
attractor for the 3D NSE, as $\alpha$ goes to zero. We also derive conditions for the weak global attractor of the NSE to be
strong, in terms of the topologies of the above mentioned convergences.

The paper is divided into five main sections: Section 2 provides the abstract setting for the 3D NSV model.
Then we establish the existence of global and exponential attractors in Sections 3 and 4, respectively. In
Section 5, we provide explicit bounds on the fractal dimension of these attractors and in Section 6 we discuss
the convergence of global attractors as $\alpha$ goes to zero.

\section{Abstract setting}

\noindent We start by introducing the models under study and fix the abstract functional setting
typical of the Navier-Stokes equations \cites{CONFO,S,T1,T2}. The notation will include a scale of $\alpha$-dependent
spaces, which arise naturally as finite energy spaces for the Navier-Stokes-Voigt equations.

\subsection{The fluid equations}
Given a bounded domain $\Omega\subset \R^3$ with boundary
$\de\Omega$ of class $C^{2}$, we consider the Navier-Stokes equations ruling the velocity vector
$u=u(\boldsymbol{x},t)$ and the pressure $p=p(\boldsymbol{x},t)$ of a homogeneous
incompressible fluid. In dimensionless form, the equations read
\begin{equation}\label{eq:NS}
\begin{cases}
u_t-\nu\Delta u + (u\cdot \nabla)u +\nabla p=f,\qquad \boldsymbol{x}\in \Omega, \ t>0,\\
\div\,  u =0, \qquad \boldsymbol{x}\in \Omega, \ t>0,
\end{cases}
\end{equation}
Here $f=f(\boldsymbol{x})$ is an autonomous density force per unit volume
and $\nu>0$ is the constant kinematic viscosity parameter of the fluid.
The system is supplemented with the nonslip  boundary condition
\begin{equation}\label{eq:NSBC}
u(\boldsymbol x,t)|_{\boldsymbol x\in\de \Omega}=0,\qquad t>0,
\end{equation}
and the initial condition
\begin{equation}\label{eq:NSIC}
u(\boldsymbol x,0)=u_0(\boldsymbol x), \qquad \boldsymbol{x}\in \Omega,
\end{equation}
for an assigned divergence-free function $u_0$.
In the same setting, we also take into account a modified version
of equations \eqref{eq:NS}-\eqref{eq:NSIC} given by
\begin{equation}\label{eq:NSV}
\begin{cases}
u_t-\alpha^2 \Delta u_t-\nu\Delta u + (u\cdot \nabla)u +\nabla p=f,\qquad \boldsymbol{x}\in \Omega, \ t>0,\\
\div\,  u =0, \qquad \boldsymbol{x}\in \Omega, \ t>0,
\end{cases}
\end{equation}
known in the literature as Navier-Stokes-Voigt\footnote{In the literature Voigt is sometimes spelled Voight.} equations.
The parameter $\alpha\in(0,1]$ above is a length scale parameter characterizing
the elasticity of the fluid. From the physical viewpoint, its presence
is a consequence of a modification of the Cauchy stress tensor, accounting for viscoelastic effects
of Kelvin-Voigt type. It is worth noticing that when $\alpha=0$, it is possible to recover, at least formally,
the classical Navier-Stokes equations \eqref{eq:NS}.

\subsection{Mathematical setting}\label{sub:mathset}
For $p\in[1,\infty]$ and
$k\in\N$, we denote
by $\mathbf{L}^p(\Omega)=\{L^p(\Omega)\}^3$, $\mathbf{H}^k(\Omega)=\{H^k(\Omega)\}^3$,
and $\mathbf{H}_0^k(\Omega)=\{H_0^k(\Omega)\}^3$
the usual Lebesgue and Sobolev spaces of vector-valued functions on $\Omega$. Setting
$$
\mathcal{V}=\big\{u\in C^\infty_0(\Omega, \R^3): \, \div\, u =0 \big\},
$$
we consider the usual Hilbert space associated with the Navier-Stokes equations
\begin{align*}
\H=\text{closure of } \mathcal{V} \text{ in } \mathbf{L}^2(\Omega),
\end{align*}
where $|\cdot|$ and  $\l\cdot,\cdot\r$ indicate its norm and scalar product, respectively.
Calling
$$
P:\mathbf{L}^2(\Omega)=\H\oplus \H^\perp\to \H
$$
the Leray orthogonal projection, the Stokes operator is defined as
$$
A=-P\Delta, \qquad \dom (A)=\mathbf{H}^2(\Omega)\cap  \V,
$$
with
$$
\V=\text{closure of } \mathcal{V} \text{ in } \mathbf{H}^1(\Omega).
$$
It is well known that the operator $A$ is self-adjoint and strictly positive. Moreover, $\dom (A^{1/2})=\V$ and
$$
\|u\|=|\nabla u|=|A^{1/2}u|, \qquad \forall u\in \V.
$$
Also, we indicate by $\V^*$ the
dual space of $\V$, endowed with the usual dual norm $\|\cdot\|_*$, and for which
the duality with $\V$ will also be written as $\l\cdot,\cdot\r$.
For $\alpha\in [0,1]$ and $s\in \R$, we define
the scale of Hilbert spaces
$$
\V_\alpha^s= \dom (A^{s/2}),
$$
endowed with scalar product
$$
\l u,v\r_{\V_\alpha^s}= \l A^{(s-1)/2}u, A^{(s-1)/2}v\r+\alpha^2\l A^{s/2}u,A^{s/2}v\r,
$$
and norm
$$
\|u\|^2_{\V_\alpha^s}= |A^{(s-1)/2}u|^2+\alpha^2|A^{s/2}u|^2.
$$
It is understood that, when $s<1$, the space $\V_\alpha^s$ consists of the completion
of $\H$ with respect to the above mentioned norm.
When $s=1$, we will simply write $\V_\alpha$ in place of $\V_\alpha^1$ and
$$
\|u\|^2_{\V_\alpha}= |u|^2+\alpha^2\|u\|^2,
$$
in agreement with the standard notation used for the Navier-Stokes equations.
Moreover, we have the so-called Poincar\'e inequalities
\begin{equation}\label{eq:poinc}
\begin{aligned}
&|u|\leq \frac{1}{\sqrt{\lambda_1}}\|u\|, \qquad \forall u\in \V,\\
&\|u\|\leq \frac{1}{\sqrt{\lambda_1}}|Au|, \qquad \forall u\in \dom(A),
\end{aligned}
\end{equation}
where $\lambda_1>0$ is the first eigenvalue of the Stokes operator $A$.
We set
$$
B(u,v)=P\big[(u\cdot \nabla)v\big].
$$
From the bilinear operator $B(\cdot, \cdot)$, we can define the trilinear form
$$
b(u,v,w)=\l B(u,v),w\r,
$$
which is continuous on $\V\times \V\times \V$ and satisfies
\begin{alignat}{2}
&b(u,v,v)=0, \qquad & &\forall u,v\in \V, \label{eq:b1}\\
&|b(u,v,w)|\leq c\|u\|  \|v\| \|w\|^{1/2}|w|^{1/2}, \qquad & &\forall u,v,w\in \V, \label{eq:b2}\\
&|b(u,v,w)|\leq c\|u\| \|v\|^{1/2}|Av|^{1/2} |w|, \qquad & &\forall u, w\in \V, v\in \dom(A).\label{eq:b3}
\end{alignat}
Above and in the rest of the paper, $c$ will denote a dimensionless scale invariant positive constant which might depend on
the shape of the domain $\Omega$ and other parameters of the problem, such as the viscosity $\nu$ or the forcing term $f$. Unless
otherwise stated, it will be \emph{independent} of $\alpha$.

\begin{remark}
We emphasize that all the results proven in this article are also valid in a more general setting,
when $\Omega$ is a compact Riemannian manifold with or without boundary and in the presence of other boundary conditions.
We refer the reader to \cites{HLT, GMe} for more details.
\end{remark}

System \eqref{eq:NS}--\eqref{eq:NSIC} can be rewritten as an abstract evolution equation of the form
\begin{equation}\label{eq:NSABS}
\begin{cases}
\dot{u} +\nu Au+B(u,u)=g,\\
u(0)=u_0,
\end{cases}
\end{equation}
where $g=Pf$. In the exact same way, \eqref{eq:NSV} turns into
\begin{equation}\label{eq:NSVABS}
\begin{cases}
\dot{u}+\alpha^2A\dot{u} +\nu Au+B(u,u)=g,\\
u(0)=u_0.
\end{cases}
\end{equation}

\subsection{The dynamical system generated by the Voigt model}
In the first part of this article, we will be concerned with the study of the Voigt regularized
model \eqref{eq:NSVABS}, for an arbitrary but fixed value of $\alpha\in(0,1]$. It has been known
since \cite{Osk1} that, for every $\alpha\in (0,1]$, the system \eqref{eq:NSVABS}
generates a strongly continuous semigroup
$$
S_\alpha(t):\V_\alpha\to \V_\alpha.
$$
Moreover, $S_\alpha(t)$ satisfies the following continuous dependence estimate.
\begin{lemma}
Let $u_0,v_0\in \V_\alpha$. Then
\begin{equation}\label{eq:contdep}
\|S_\alpha(t)u_0-S_\alpha(t)v_0\|_{\V_\alpha}\leq \e^{c\frac{(r+t)}{\alpha^2}} \|u_0-v_0\|_{\V_\alpha}, \qquad t\geq 0,
\end{equation}
whenever
$$
\|u_0\|_{\V_\alpha},\|v_0\|_{\V_\alpha}\leq r.
$$
\end{lemma}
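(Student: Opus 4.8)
The plan is to run the standard energy estimate on the difference of two trajectories, measured in the $\V_\alpha$-norm, and to close it with Gronwall's lemma; the inverse power of $\alpha$ in the exponent will be produced by the Poincar\'e-type comparison built into the energy space, namely $|w|\leq\|w\|_{\V_\alpha}$ together with $\|w\|\leq\alpha^{-1}\|w\|_{\V_\alpha}$. First I would set $u(t)=S_\alpha(t)u_0$, $v(t)=S_\alpha(t)v_0$ and $w=u-v$. Subtracting the two copies of \eqref{eq:NSVABS} and using bilinearity to write $B(u,u)-B(v,v)=B(u,w)+B(w,v)$, the difference solves $\dot w+\alpha^2A\dot w+\nu Aw=-B(u,w)-B(w,v)$. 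Pairing this with $w$ in $\H$, the two time-derivative terms combine into $\frac12\ddt\big(|w|^2+\alpha^2\|w\|^2\big)=\frac12\ddt\|w\|_{\V_\alpha}^2$, the viscous term yields $\nu\|w\|^2$, and the cancellation property \eqref{eq:b1} kills $b(u,w,w)$, leaving the single surviving identity
\[
\tfrac12\ddt\|w\|_{\V_\alpha}^2+\nu\|w\|^2=-b(w,v,w).
\]

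The heart of the matter is estimating the one remaining trilinear term, and arranging the bookkeeping so that exactly the power $\alpha^{-2}$ appears. I would apply \eqref{eq:b2} to obtain $|b(w,v,w)|\leq c\|v\|\,\|w\|^{3/2}|w|^{1/2}$, then isolate one factor of $\|w\|$ to be absorbed against the dissipation and measure the remaining half-powers of $w$ in the energy norm via $\|w\|^{1/2}|w|^{1/2}\leq\alpha^{-1/2}\|w\|_{\V_\alpha}$. Young's inequality then gives $|b(w,v,w)|\leq\frac{\nu}{2}\|w\|^2+\frac{c}{\alpha^{2}}\|v\|^2\|w\|_{\V_\alpha}^2$ (absorbing $\nu$ into $c$), so that after discarding the good dissipative term one arrives at a differential inequality of the form $\ddt\|w\|_{\V_\alpha}^2\leq\frac{c}{\alpha^{2}}\|v(t)\|^2\,\|w\|_{\V_\alpha}^2$. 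To turn the coefficient into a quantity controllable in time I would invoke the basic a priori estimates for \eqref{eq:NSVABS} itself: testing the equation for $v$ against $v$, using \eqref{eq:b1} and \eqref{eq:poinc} and integrating yields, on the one hand, a bound $\|v(t)\|_{\V_\alpha}\leq K$ uniform in $t\geq0$ (with $K$ depending only on $r$, $\nu$, $\lambda_1$ and the forcing, and \emph{not} on $\alpha$ since $\alpha\leq1$), and on the other hand the integrated bound $\int_0^t\|v(s)\|^2\,\d s\leq c(r+t)$.

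Finally I would apply the differential form of Gronwall's lemma, giving $\|w(t)\|_{\V_\alpha}^2\leq\|w(0)\|_{\V_\alpha}^2\exp\!\big(\frac{c}{\alpha^{2}}\int_0^t\|v(s)\|^2\,\d s\big)$, and then insert the a priori bound on $\int_0^t\|v\|^2$ and take square roots to recover the stated estimate \eqref{eq:contdep} with exponent $c(r+t)/\alpha^{2}$. The main obstacle is the nonlinear term: one must split the trilinear estimate so that precisely one power of $\|w\|$ is traded against the viscosity while the remaining $w$-factors (and the difference norm itself) are read in the $\V_\alpha$-norm, so that the Poincar\'e weight $\alpha^{-1}$ enters exactly twice and no larger negative power of $\alpha$ is generated. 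Keeping the dependence on $\alpha$ and on the data sharp during this step, together with verifying that the a priori bound on $v$ is genuinely uniform in time, is what makes the claimed exponent correct; the Gronwall step itself is routine.
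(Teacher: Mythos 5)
Your proposal is correct and follows essentially the same route as the paper: test the difference equation with $w$ in $\H$, use \eqref{eq:b1} to cancel $b(u,w,w)$, bound the remaining trilinear term via \eqref{eq:b2} and Young, convert $\|w\|^2$ into $\alpha^{-2}\|w\|_{\V_\alpha}^2$, and close with the integral bound \eqref{eq:absorb2} and Gronwall. The only (harmless) difference is the exact splitting in the Young step, which if anything yields a slightly better power of $\alpha$ than the stated $\alpha^{-2}$.
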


\begin{proof}
Let $u(t)=S_\alpha(t)u_0$ and $v(t)=S_\alpha(t)v_0$. The difference $w=u-v$ satisfies
$$
\begin{cases}
\dot{w}+\alpha^2A\dot{w} +\nu Aw+B(u,w)+B(w,v)=0,\\
w(0)=u_0-v_0.
\end{cases}
$$
Testing the above equation by $w$ in $\H$ gives
$$
\frac12\ddt \big[|w|^2+\alpha^2\|w\|^2\big]+\nu \|w\|^2=-\l B(w,v),w\r.
$$
The right hand side of the above equation can be estimated through \eqref{eq:b2}, to obtain
$$
|\l B(w,v),w\r|  \leq c  \|v\|\|w\|^2\leq \nu\|w\|^2 + c  \|v\|^2\|w\|^2.
$$
Therefore,
$$
\ddt \|w\|_{\V_\alpha}^2\leq c  \|v\|^2\|w\|^2 \leq \frac{c}{\alpha^2}\|v\|^2\|w\|_{\V_\alpha}^2.
$$
Thanks to the integral estimate \eqref{eq:absorb2} below, we have
$$
\int_0^t \|v(s)\|^2\d s\leq c(\|v_0\|^2_{\V_\alpha}+t),\qquad \forall t\geq 0,
$$
and the conclusion follows from the standard Gronwall lemma.
\end{proof}

\begin{remark}
It is expected that the estimate \eqref{eq:contdep} blows up as $\alpha\to 0$.
The limit case would correspond to prove a continuous dependence result for
weak solutions to the three-dimensional Navier-Stokes equations, a  longstanding
open problem that seems to be out of reach at the moment.
\end{remark}

\section{Global attractors}
\noindent We now deal with the dissipative features of the semigroup $S_\alpha(t)$, for $\alpha\in (0,1]$ arbitrary but fixed.
Specifically, we prove the existence of the global attractor and analyze its regularity in higher Sobolev spaces.
Define (twice) the energy of the system as
$$
E_\alpha(t)=\|S_\alpha(t)u_0\|^2_{\V_\alpha}=|u(t)|^2+\alpha^2\| u(t)\|^2.
$$
Thanks to the Poincar\'e inequality \eqref{eq:poinc}, it is straightforward to check that
\begin{equation}\label{eq:ineq}
\alpha^2\|u\|^2\leq E_\alpha \leq \frac{1+\lambda_1\alpha^2}{\lambda_1} \|u\|^2.
\end{equation}

\subsection{A first dissipative estimate}
We begin to show that
the trajectories originating from any given bounded set eventually fall, uniformly in time,
into a bounded absorbing set $\B_\alpha\subset \V_\alpha$. The proof of its existence
is based on the following standard argument, which can be made rigorous by means of a
Galerkin approximation procedure.
We multiply \eqref{eq:NSVABS} by $u$ in $\H$, to obtain
\begin{equation}\label{eq:zerodiff}
\frac12 \ddt E_\alpha +\nu\|u\|^2=\l g,u\r\leq \frac{\nu}{2}\|u\|^2+\frac{|g|^2}{2\nu\lambda_1}.
\end{equation}
Hence,
\begin{equation}\label{eq:firstdiff}
\ddt E_\alpha +\nu\|u\|^2\leq \frac{|g|^2}{\nu\lambda_1}.
\end{equation}
Thanks to \eqref{eq:ineq}, we obtain
$$
\ddt E_\alpha +\kappa_\nu E_\alpha \leq \frac{|g|^2}{\nu\lambda_1},
$$
where
\begin{equation}\label{eq:kappanu}
\kappa_\nu:= \frac{\nu\lambda_1}{1+\lambda_1\alpha^2}.
\end{equation}
The Gronwall lemma then entails
\begin{equation}\label{eq:absorb}
E_\alpha(t)\leq E_\alpha(0)\e^{-\kappa_\nu t}+ \frac{|g|^2}{\nu\kappa_\nu\lambda_1}
=E_\alpha(0)\e^{-\kappa_\nu t}+ \frac{(1+\lambda_1\alpha^2)|g|^2}{\nu^2\lambda_1^2},
\end{equation}
and, using one more time \eqref{eq:firstdiff}, we also obtain the integral estimate
\begin{equation}\label{eq:absorb2}
\nu\int_0^t\|u(s)\|^2\d s\leq \| u_0\|_{\V_\alpha}^2+ \frac{|g|^2}{\nu\lambda_1} t, \qquad \forall t\geq 0.
\end{equation}
Denote
\begin{equation}\label{eq:absball}
\B_\alpha=\left\{w\in \V_\alpha: |w|^2+\alpha^2 \|w\|^2\leq \frac{2(1+\lambda_1\alpha^2)|g|^2}{\nu^2\lambda_1^2}\right\}.
\end{equation}
\begin{proposition}\label{prop:absorb}
The set $\B_\alpha$ is a bounded absorbing set for $S_\alpha(t)$.
\end{proposition}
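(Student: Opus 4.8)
The plan is to deduce both required properties directly from the dissipative estimate \eqref{eq:absorb}, so that the proposition reduces to a bookkeeping exercise once that estimate is in hand. Boundedness of $\B_\alpha$ in $\V_\alpha$ is immediate: by its very definition \eqref{eq:absball}, the set is the closed ball in the $\V_\alpha$ norm, centered at the origin with radius $\sqrt{2(1+\lambda_1\alpha^2)}\,|g|/(\nu\lambda_1)$.

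For the absorbing property, I would fix an arbitrary bounded set $\mathcal{B}\subset \V_\alpha$ and choose $R>0$ with $\|u_0\|_{\V_\alpha}\leq R$ for every $u_0\in\mathcal{B}$, so that $E_\alpha(0)\leq R^2$. Inserting this into \eqref{eq:absorb} gives
$$
E_\alpha(t)\leq R^2\e^{-\kappa_\nu t}+\frac{(1+\lambda_1\alpha^2)|g|^2}{\nu^2\lambda_1^2},\qquad t\geq 0.
$$
The goal is to force the right-hand side below the threshold $\frac{2(1+\lambda_1\alpha^2)|g|^2}{\nu^2\lambda_1^2}$ defining $\B_\alpha$, which amounts to requiring the transient term $R^2\e^{-\kappa_\nu t}$ to be dominated by the constant $\frac{(1+\lambda_1\alpha^2)|g|^2}{\nu^2\lambda_1^2}$.

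This is achieved for all $t\geq t_0$, where one may take
$$
t_0=\frac{1}{\kappa_\nu}\log^+\!\left(\frac{R^2\nu^2\lambda_1^2}{(1+\lambda_1\alpha^2)|g|^2}\right),
$$
with $\log^+$ denoting the positive part, so that $t_0=0$ when the argument does not exceed $1$. Recalling the definition \eqref{eq:kappanu} of $\kappa_\nu$, it follows that $E_\alpha(t)=\|S_\alpha(t)u_0\|^2_{\V_\alpha}\leq \frac{2(1+\lambda_1\alpha^2)|g|^2}{\nu^2\lambda_1^2}$ for every $u_0\in\mathcal{B}$ and every $t\geq t_0$; that is, $S_\alpha(t)\mathcal{B}\subseteq\B_\alpha$. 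Since $t_0$ depends only on $R$ and on the fixed parameters of the problem, this establishes the absorbing property.

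I do not anticipate a genuine obstacle: all the analytic content—the energy identity \eqref{eq:zerodiff}, the use of \eqref{eq:ineq}, and the Gronwall argument producing \eqref{eq:absorb}—has already been carried out above. The only point requiring minor care is that \eqref{eq:absorb} is formally derived along Galerkin approximations and must be passed to the limit for genuine solutions; but this is precisely the standard justification already flagged before \eqref{eq:zerodiff}, and the inequality is preserved under the relevant weak limits.
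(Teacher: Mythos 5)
Your proposal is correct and follows exactly the route the paper takes: the proposition is stated as an immediate consequence of the dissipative estimate \eqref{eq:absorb}, with the entering time obtained by making the transient term $E_\alpha(0)\e^{-\kappa_\nu t}$ small enough that the total stays below the radius of $\B_\alpha$ (the paper leaves the entering time implicit, whereas you compute it explicitly, which is a harmless refinement). Your closing remark about the Galerkin justification matches the caveat the paper itself makes before \eqref{eq:zerodiff}.
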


In more precise terms, given a bounded set $B\subset \V_\alpha$, we have proved that there exists an entering time
$t_B>0$ such that
$$
S_\alpha(t)B\subset \B_\alpha, \qquad \forall t\geq t_B.
$$
In particular, this gives a rough estimate on the asymptotic behavior of the Voigt system, and allows us
to restrict our attention to the absorbing set $\B_\alpha$ itself. Indeed, the long term dynamics of
trajectories departing from $\B_\alpha$ captures necessarily the dynamics of any trajectory, due to the
fact that any trajectory will be absorbed by $\B_\alpha$ in finite time.

\subsection{The semigroup decomposition}\label{sub:dec}
From the dissipative estimate \eqref{eq:absorb} above, it is clear that
\begin{equation}\label{eq:estind}
\sup_{t\geq 0}\sup_{u_0\in \B_\alpha} \| S_\alpha(t)u_0 \|_{\V_\alpha}\leq M_1:=
\left[\frac{3(1+\lambda_1\alpha^2)|g|^2}{\nu^2\lambda_1^2}\right]^{1/2}.
\end{equation}
For $u_0\in \B_\alpha$, we split the solution as
$$
S_\alpha(t)u_0=L_\alpha(t)u_0+K_\alpha(t)u_0
$$
where
$$
v(t)=L_\alpha(t)u_0 \qquad \text{and}\qquad w(t)=K_\alpha(t)u_0
$$
respectively solve
\begin{equation}\label{eq:decay}
\begin{cases}
\dot{v}+\alpha^2A\dot{v} +\nu Av+B(u,v)=0,\\
v(0)=u_0,
\end{cases}
\end{equation}
and
\begin{equation}\label{eq:cpt}
\begin{cases}
\dot{w}+\alpha^2A\dot{w} +\nu Aw+B(u,w)=g,\\
w(0)=0.
\end{cases}
\end{equation}
The proof of the exponential decay of the solution operator $L_\alpha(t)$ follows
word for word the derivation of the dissipative estimate \eqref{eq:absorb} with
$g=0$. We therefore state the result in the following lemma, without proof.
\begin{lemma}\label{lem:decay}
For every $t\geq 0$, there holds
$$
\| L_\alpha(t)u_0\|_{\V_\alpha}^2\leq \|u_0\|^2_{\V_\alpha}\e^{-\kappa_\nu t},
$$
where $\kappa_\nu$ is given by \eqref{eq:kappanu}.
\end{lemma}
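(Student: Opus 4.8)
The plan is to reproduce verbatim the energy computation that led to \eqref{eq:absorb}, now carried out on the homogeneous problem \eqref{eq:decay} with the forcing term absent. First I would test \eqref{eq:decay} against $v$ in $\H$ (rigorously, against the Galerkin truncations, exactly as indicated for the construction of the absorbing set, and then pass to the limit). Since $A$ is self-adjoint with $\|v\|^2=|A^{1/2}v|^2$, the two time-derivative terms combine as
$$
\langle \dot v,v\rangle+\alpha^2\langle A\dot v,v\rangle=\frac12\ddt\big[|v|^2+\alpha^2\|v\|^2\big]=\frac12\ddt E_\alpha,
$$
while the viscous term yields $\nu\langle Av,v\rangle=\nu\|v\|^2$.

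The essential step -- and the reason the derivation is genuinely ``word for word'' -- is that the nonlinear term disappears. Indeed $\langle B(u,v),v\rangle=b(u,v,v)$, and the cancellation \eqref{eq:b1} gives $b(u,v,v)=0$ \emph{irrespective} of the transport field $u=S_\alpha(t)u_0$ sitting in the first slot. Consequently no estimate whatsoever on $u$ is required, and one is left with the clean identity
$$
\frac12\ddt E_\alpha+\nu\|v\|^2=0.
$$

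From here I would follow exactly the passage from \eqref{eq:firstdiff} to \eqref{eq:absorb}: discarding half of the dissipation (the step that, in the forced case, was spent absorbing $\langle g,u\rangle$ via Young's inequality) gives $\ddt E_\alpha+\nu\|v\|^2\leq 0$, and then the right-hand bound in \eqref{eq:ineq} together with the definition \eqref{eq:kappanu} of $\kappa_\nu$ yields $\nu\|v\|^2\geq\kappa_\nu E_\alpha$, whence
$$
\ddt E_\alpha+\kappa_\nu E_\alpha\leq 0.
$$
The standard Gronwall lemma then produces $E_\alpha(t)\leq E_\alpha(0)\e^{-\kappa_\nu t}$, which is the claim; retaining the same factor structure as in the forced estimate is precisely what accounts for the rate $\kappa_\nu$ rather than $2\kappa_\nu$. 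I do not anticipate any real obstacle beyond the routine Galerkin justification. The single point worth emphasizing is that the decay of $L_\alpha(t)$ is \emph{unconditional} in the transport field, exactly because the test-function cancellation \eqref{eq:b1} is blind to the first argument of $B$; this is what makes $L_\alpha(t)$ a uniform exponential contraction on $\V_\alpha$ at rate $\kappa_\nu$, in sharp contrast with the $\alpha$-degenerate continuous-dependence bound \eqref{eq:contdep}, where the nonlinearity enters in the non-cancelling form $B(w,v)$.
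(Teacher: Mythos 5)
Your proof is correct and is exactly the argument the paper intends: the authors state Lemma \ref{lem:decay} without proof, remarking that it follows ``word for word'' the derivation of \eqref{eq:absorb} with $g=0$, which is precisely the computation you carry out (including the key cancellation $b(u,v,v)=0$ from \eqref{eq:b1} and the Poincar\'e step $\nu\|v\|^2\geq\kappa_\nu E_\alpha$). Your closing observation that keeping the full dissipation would even yield the rate $2\kappa_\nu$ is accurate, and the stated rate $\kappa_\nu$ is what the lemma claims.
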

Next we show that, for every fixed time, the component related to $K_\alpha(t)u_0$ belongs to a compact
subset of $\V_\alpha$, uniformly as the initial data $u_0$ belongs to the absorbing set $\B_\alpha$, given by
Proposition \ref{prop:absorb}.
\begin{lemma}\label{lem:cpt}
For every $\alpha\in (0,1]$ and $u_0\in\B_\alpha$, we have the estimate
$$
\sup_{t\geq 0}\| K_\alpha(t)u_0\|_{\V_\alpha^2}^2\leq r_\alpha,
$$
where
\begin{equation}\label{eq:ralpha}
r_\alpha:=
\frac{c}{\kappa_\nu} \left[\frac{M_1^6}{\alpha^6\nu^3}+\frac{2}{\nu}|g|^2\right].
\end{equation}
\end{lemma}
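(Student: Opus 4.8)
The plan is to derive a higher-order energy estimate by testing \eqref{eq:cpt} with $Aw$ in $\H$ (rigorous via the Galerkin scheme). Using the self-adjointness of $A$ together with $\l\dot w,Aw\r=\tfrac12\ddt\|w\|^2$ and $\alpha^2\l A\dot w,Aw\r=\tfrac{\alpha^2}{2}\ddt|Aw|^2$, the linear part reorganizes into $\tfrac12\ddt\|w\|_{\V_\alpha^2}^2+\nu|Aw|^2$, so that
$$
\frac12\ddt\|w\|_{\V_\alpha^2}^2+\nu|Aw|^2=\l g,Aw\r-b(u,w,Aw).
$$
The forcing term is controlled by Cauchy--Schwarz and Young as $|\l g,Aw\r|\leq \tfrac{\nu}{4}|Aw|^2+\tfrac{|g|^2}{\nu}$.

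The crux is the trilinear term $b(u,w,Aw)$, which is critical in three dimensions. I would estimate it with \eqref{eq:b3} (taking the second slot to be $w$ and the third to be $Aw$), giving $|b(u,w,Aw)|\leq c\|u\|\,\|w\|^{1/2}|Aw|^{3/2}$, and then split off the top-order factor by Young's inequality with conjugate exponents $4/3$ and $4$:
$$
|b(u,w,Aw)|\leq \frac{\nu}{4}|Aw|^2+\frac{c}{\nu^3}\|u\|^4\|w\|^2.
$$
Unlike the viscous NSE, this leftover term cannot be absorbed by the viscosity, and this is exactly where the a priori $\V_\alpha$-bounds enter. From \eqref{eq:estind} one has $\alpha^2\|u\|^2\leq M_1^2$, while writing $w=S_\alpha(t)u_0-L_\alpha(t)u_0$ and combining \eqref{eq:estind} with Lemma \ref{lem:decay} gives $\|w\|_{\V_\alpha}\leq 2M_1$, hence $\alpha^2\|w\|^2\leq 4M_1^2$. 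Consequently $\tfrac{c}{\nu^3}\|u\|^4\|w\|^2\leq \tfrac{cM_1^6}{\nu^3\alpha^6}$, an absolute (though $\alpha$-dependent) constant.

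Collecting these bounds yields
$$
\frac12\ddt\|w\|_{\V_\alpha^2}^2+\frac{\nu}{2}|Aw|^2\leq \frac{cM_1^6}{\nu^3\alpha^6}+\frac{|g|^2}{\nu}.
$$
To produce a genuine dissipation term I would use the Poincar\'e inequality \eqref{eq:poinc} in the form $\|w\|_{\V_\alpha^2}^2=\|w\|^2+\alpha^2|Aw|^2\leq\tfrac{1+\lambda_1\alpha^2}{\lambda_1}|Aw|^2$, which, recalling \eqref{eq:kappanu}, gives $\tfrac{\nu}{2}|Aw|^2\geq \tfrac{\kappa_\nu}{2}\|w\|_{\V_\alpha^2}^2$. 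Therefore
$$
\ddt\|w\|_{\V_\alpha^2}^2+\kappa_\nu\|w\|_{\V_\alpha^2}^2\leq \frac{2cM_1^6}{\nu^3\alpha^6}+\frac{2|g|^2}{\nu},
$$
and since $w(0)=0$, the standard Gronwall lemma gives $\|w\|_{\V_\alpha^2}^2\leq \tfrac{1}{\kappa_\nu}\big[\tfrac{2cM_1^6}{\nu^3\alpha^6}+\tfrac{2|g|^2}{\nu}\big]$ uniformly in $t\geq 0$, which is precisely $r_\alpha$ in \eqref{eq:ralpha}. The main obstacle is the critical trilinear term: it cannot be closed by absorption alone, and the estimate is saved only by feeding in the uniform $\V_\alpha$-bounds, which is exactly what forces the $\alpha^{-6}$ dependence (and the blow-up of the bound as $\alpha\to 0$).
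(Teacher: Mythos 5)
Your proof is correct and follows essentially the same route as the paper: test \eqref{eq:cpt} with $Aw$, bound the trilinear term by $c\|u\|\|w\|^{1/2}|Aw|^{3/2}$, absorb the top-order factor via Young, convert the remaining dissipation to $\kappa_\nu\|w\|_{\V_\alpha^2}^2$ by Poincar\'e, and apply Gronwall with zero initial data. The only cosmetic differences are that you correctly invoke \eqref{eq:b3} for the trilinear bound where the paper cites \eqref{eq:b2}, and you spell out the bound $\|w\|_{\V_\alpha}\leq 2M_1$ via the decomposition $w=S_\alpha(t)u_0-L_\alpha(t)u_0$, which the paper leaves implicit.
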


\begin{proof}
Multiplying \eqref{eq:cpt} by $Aw$, we are led to the identity
$$
\frac12\ddt \Psi_\alpha +\nu |Aw|^2=\l g,Aw\r-\l B(u,w),Aw\r,
$$
where
$$
\Psi_\alpha(t)=\| K_\alpha(t)u_0\|_{\V_\alpha^2}^2=\| w(t)\|^2+\alpha^2|Aw(t)|^2.
$$
Clearly,
$$
|\l g,Aw\r|\leq \frac{\nu}{4}|Aw|^2 +\frac{1}{\nu}|g|^2.
$$
Also, in view of \eqref{eq:b2} and the Young inequality,
\begin{align*}
|\l B(u,w),Aw\r| \leq c\|u\|\|w\|^{1/2}|Aw|^{3/2}\leq \frac{\nu}{4}|Aw|^2+ c\frac{M_1^6}{\alpha^6\nu^3}.
\end{align*}
Therefore,
$$
\ddt \Psi_\alpha +\nu |Aw|^2\leq c\frac{M_1^6}{\alpha^6\nu^3}+\frac{2}{\nu}|g|^2.
$$
Again, it is easy to see that \eqref{eq:poinc} implies
$$
\nu |Aw|^2\geq \kappa_\nu \Psi_\alpha,
$$
so that
$$
\ddt \Psi_\alpha +\kappa_\nu \Psi_\alpha\leq c\frac{M_1^6}{\alpha^6\nu^3}+\frac{2}{\nu}|g|^2.
$$
The conclusion follows from the Gronwall lemma, noticing that $\Psi_\alpha(0)=0$.
\end{proof}

\subsection{The global attractor and its regularity}
The aim of this section is to prove the existence of a universal attractor for $S_\alpha(t)$ on $\V_\alpha$.
Recall that the universal attractor is the (unique) compact set $\A_\alpha\subset \V_\alpha$, which is at the
same time attracting, in the sense of the Hausdorff
semidistance\footnote{Given a metric space $(X,\d_X)$,
the Hausdorff semidistance $\dist_X(B,C)$ between two  sets $B,C\subset X$ is given by
$$
\dist_X(B,C)=\sup_{b\in B}\inf_{c\in C}\d_X(b,c).
$$}, and fully invariant for $S_\alpha(t)$,
that is, $S_\alpha(t)\A_\alpha=\A_\alpha$ for all $t\geq 0$ (see, e.g., \cites{H,T3}).
Define
$$
\D_\alpha(r)=\big\{u\in \V_\alpha^2: \|u\|^2+\alpha^2|Au|^2\leq r\big\}\cap \B_\alpha.
$$
From Lemmas \ref{lem:decay} and \ref{lem:cpt}, we obtain
\begin{theorem}\label{thm:expattr1}
The set $\D_\alpha(r_\alpha)$ is a compact exponentially attracting set for $S_\alpha(t)$, namely
$$
\dist_{\V_\alpha}(S_\alpha(t)\B_\alpha, \D_\alpha(r_\alpha))\leq \|\B_\alpha\|_{\V_\alpha}\e^{-\frac{\kappa_\nu}{2} t}.
$$
\end{theorem}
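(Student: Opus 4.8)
The goal is to show that $\D_\alpha(r_\alpha)$ is compact and that it attracts $\B_\alpha$ at the stated exponential rate, and this should follow almost immediately from the semigroup decomposition in Section \ref{sub:dec} together with Lemmas \ref{lem:decay} and \ref{lem:cpt}. My plan is to handle compactness and attraction separately, dealing with compactness first since it is the quicker of the two.

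For compactness, I would argue that $\D_\alpha(r_\alpha)$ is a closed and bounded subset of $\V_\alpha^2 = \dom(A)$, and that the embedding $\V_\alpha^2 \hookrightarrow \V_\alpha$ is compact. The latter is the key structural fact: since $A$ has compact resolvent (its inverse is compact because $\dom(A) = \mathbf{H}^2(\Omega)\cap\V$ embeds compactly into $\H$ via Rellich-Kondrachov on the bounded domain $\Omega$), the inclusion of the graph-norm space $\dom(A)$ into $\V$ is compact, and this compactness is inherited by the $\alpha$-weighted norms $\|\cdot\|_{\V_\alpha^2}$ and $\|\cdot\|_{\V_\alpha}$ for each fixed $\alpha \in (0,1]$, since these are equivalent to the standard $\dom(A)$ and $\V$ norms for fixed $\alpha$. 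Thus a bounded set in $\V_\alpha^2$ is precompact in $\V_\alpha$, and intersecting with the closed set $\B_\alpha$ keeps it closed, so $\D_\alpha(r_\alpha)$ is compact in $\V_\alpha$.

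For the exponential attraction estimate, I would start from the decomposition $S_\alpha(t)u_0 = L_\alpha(t)u_0 + K_\alpha(t)u_0$ for $u_0 \in \B_\alpha$. The point is that $K_\alpha(t)u_0$ lands inside $\D_\alpha(r_\alpha)$ for every $t \geq 0$: by Lemma \ref{lem:cpt} it satisfies $\|K_\alpha(t)u_0\|_{\V_\alpha^2}^2 \leq r_\alpha$, placing it in the first defining set, and a separate (standard, dissipative) bound must confirm $K_\alpha(t)u_0 \in \B_\alpha$ as well so that it lies in the intersection. Granting this, for any $u_0 \in \B_\alpha$ we estimate
\begin{equation*}
\inf_{\zeta \in \D_\alpha(r_\alpha)} \|S_\alpha(t)u_0 - \zeta\|_{\V_\alpha} \leq \|S_\alpha(t)u_0 - K_\alpha(t)u_0\|_{\V_\alpha} = \|L_\alpha(t)u_0\|_{\V_\alpha} \leq \|u_0\|_{\V_\alpha}\,\e^{-\frac{\kappa_\nu}{2}t},
\end{equation*}
where the last inequality is Lemma \ref{lem:decay}. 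Taking the supremum over $u_0 \in \B_\alpha$ and recalling the definition of the Hausdorff semidistance then yields
\begin{equation*}
\dist_{\V_\alpha}(S_\alpha(t)\B_\alpha, \D_\alpha(r_\alpha)) \leq \sup_{u_0\in\B_\alpha}\|u_0\|_{\V_\alpha}\,\e^{-\frac{\kappa_\nu}{2}t} = \|\B_\alpha\|_{\V_\alpha}\,\e^{-\frac{\kappa_\nu}{2}t},
\end{equation*}
which is exactly the claimed bound.

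The main obstacle, and the only step requiring genuine care, is verifying that $K_\alpha(t)u_0$ actually lies in $\D_\alpha(r_\alpha)$ rather than merely in the $\V_\alpha^2$-ball of radius $r_\alpha$: one must separately check that $K_\alpha(t)u_0 \in \B_\alpha$, i.e. that the compact component also obeys the absorbing-set bound. Since $w = K_\alpha(t)u_0$ solves \eqref{eq:cpt} with the full forcing $g$ and zero initial data, testing by $w$ in $\H$ and repeating the dissipative computation leading to \eqref{eq:absorb} (now with $E_\alpha(0)=0$) gives $\|w(t)\|_{\V_\alpha}^2 \leq |g|^2/(\nu\kappa_\nu\lambda_1) \leq 2(1+\lambda_1\alpha^2)|g|^2/(\nu^2\lambda_1^2)$ for all $t\geq 0$, which is precisely the membership condition for $\B_\alpha$. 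One should also confirm that $r_\alpha$ is chosen so that the two constraints are consistent, but this is guaranteed by the definition \eqref{eq:ralpha}. With these checks in place the theorem follows directly from the two lemmas.
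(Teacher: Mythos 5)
Your proof is correct and follows essentially the same route as the paper, which simply asserts that the theorem follows from Lemmas \ref{lem:decay} and \ref{lem:cpt} via the decomposition $S_\alpha(t)u_0=L_\alpha(t)u_0+K_\alpha(t)u_0$ and the compact embedding $\V_\alpha^2\hookrightarrow\V_\alpha$. You also correctly fill in the one detail the paper leaves implicit, namely that $K_\alpha(t)u_0\in\B_\alpha$ (which indeed follows by testing \eqref{eq:cpt} with $w$, since $b(u,w,w)=0$ reproduces the dissipative estimate \eqref{eq:absorb} with zero initial energy).
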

It is well known that the existence of an attracting set is equivalent to the existence of the global attractor.
Moreover, the fact that the global attractor is the minimal closed attracting set implies that the following holds.

\begin{corollary}
Let $\alpha\in(0,1]$. There exists the global attractor $\A_\alpha\subset \V_\alpha$ for $S_\alpha(t)$. Moreover,
$\A_\alpha\subset \D_\alpha(r_\alpha)$, and it is therefore bounded in $\V_\alpha^2$.
\end{corollary}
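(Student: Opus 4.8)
The plan is to deduce the existence of the global attractor $\A_\alpha$ directly from Theorem~\ref{thm:expattr1} via the standard abstract theory of dissipative semigroups. The key observation is that we are in the classical setting: $S_\alpha(t)$ is a strongly continuous semigroup on the complete metric space $\V_\alpha$, it possesses the bounded absorbing set $\B_\alpha$ (Proposition~\ref{prop:absorb}), and by Theorem~\ref{thm:expattr1} it admits the compact attracting set $\D_\alpha(r_\alpha)$. These are precisely the hypotheses of the fundamental existence theorem for global attractors (see, e.g., \cites{H,T3}), so the existence and uniqueness of $\A_\alpha$ follow immediately.

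First I would recall the abstract criterion: a semigroup that is asymptotically compact (or, as here, admits a compact attracting set) and possesses a bounded absorbing set has a connected global attractor given by the omega-limit set $\A_\alpha = \omega(\B_\alpha) = \bigcap_{s\geq 0}\overline{\bigcup_{t\geq s} S_\alpha(t)\B_\alpha}$. I would then invoke the characterization of the global attractor as the \emph{minimal} closed attracting set. Since $\D_\alpha(r_\alpha)$ is itself a closed attracting set (indeed it is compact, hence closed in $\V_\alpha$, and attracting by Theorem~\ref{thm:expattr1}), minimality forces $\A_\alpha\subset \D_\alpha(r_\alpha)$.

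The inclusion $\A_\alpha\subset\D_\alpha(r_\alpha)$ is the crux of the regularity claim. By definition $\D_\alpha(r_\alpha)\subset \V_\alpha^2$ with $\|u\|^2+\alpha^2|Au|^2\leq r_\alpha$ on it, so the inclusion shows at once that $\A_\alpha$ is bounded in $\V_\alpha^2$, which is the optimal regularity asserted. Alternatively, one may argue via full invariance: for $u\in\A_\alpha$ and the complete bounded trajectory through $u$, the decomposition $S_\alpha(t)=L_\alpha(t)+K_\alpha(t)$ together with Lemma~\ref{lem:decay} (the $L_\alpha$ part decays to zero) and Lemma~\ref{lem:cpt} (the $K_\alpha$ part is uniformly bounded in $\V_\alpha^2$ by $r_\alpha$) shows that every point of $\A_\alpha$ inherits the $\V_\alpha^2$-bound in the limit $t\to\infty$ along the trajectory.

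I do not anticipate a genuine obstacle here, as this corollary is a direct application of standard attractor theory to the ingredients already assembled. The only point requiring mild care is verifying that the exponentially attracting set of Theorem~\ref{thm:expattr1} is indeed \emph{closed} in $\V_\alpha$ (so that minimality applies): this holds because $\D_\alpha(r_\alpha)$ is the intersection of the $\V_\alpha^2$-ball with $\B_\alpha$ and is compact in $\V_\alpha$ by the compact embedding $\V_\alpha^2\hookrightarrow\V_\alpha$, hence a fortiori closed. With that observation the argument is complete.
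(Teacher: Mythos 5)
Your proposal is correct and follows essentially the same route as the paper: the existence of $\A_\alpha$ is deduced from the compact exponentially attracting set $\D_\alpha(r_\alpha)$ of Theorem \ref{thm:expattr1} via the standard abstract theory, and the inclusion $\A_\alpha\subset\D_\alpha(r_\alpha)$ (hence boundedness in $\V_\alpha^2$) from the minimality of the global attractor among closed attracting sets. The extra remarks on closedness of $\D_\alpha(r_\alpha)$ and the alternative invariance argument are sound but not needed beyond what the paper records.
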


\begin{remark}
The existence of a finite-dimensional global attractor was shown for the first time in \cite{K1}.
The estimates of the fractal and Hausdorff dimensions of the global
attractor were later improved in \cite{KT}, where also an upper bound on the number
of asymptotic determining modes  of the solutions was provided. Concerning the regularity
of the attractor, it can be proved to be as smooth as the forcing term $f$ permits, and
even real analytic, whenever $f$ is analytic as well \cite{KLT}.
\end{remark}

\begin{remark}
Our proof of the existence of the global attractor slightly differs from the one in \cite{KT}. While here
we obtain the optimal regularity in one step (also as in \cite{GMe}*{Section 6}), the method of
\cite{KT} requires a bootstrapping procedure which implies first the regularity in $\V_\alpha^{3/2}$,
then in $\V_\alpha^{5/3}$ and finally in $\V_\alpha^{2}$. Here, we also wanted to rephrase those
results into $\alpha$-dependent spaces, in order to simplify
the handling of the dependence on the parameter $\alpha$.
\end{remark}

\section{Exponential attractors}
\noindent Despite the existence of an exponentially attracting set, quantitative information on the
attraction rate of the global attractor is usually very hard to find, if not out of reach. To overcome
this difficulty, it was introduced in \cite{EFNT} the concept of exponential attractor.
An exponential attractor is a compact positively invariant subset of the phase space of finite
fractal dimension which attracts all trajectories at an exponential rate. Recall that the fractal
dimension of a compact set $K$ in a metric space $X$ is defined by
$$
\dim_XK=\limsup_{\eps \to 0} \frac{\log N(\eps,K)}{\log(1/\eps)},
$$
where $N(\eps,K)$ is the smallest numbers of balls of radius $\eps$ necessary
to cover $K$.
The main result of this section is the following.

\begin{theorem}\label{thm:expattr}
Let $\alpha\in (0,1]$. The dynamical system $S_\alpha(t)$ on $\V_\alpha$ admits an exponential attractor $\E_\alpha$
contained and bounded in $\V^2_\alpha$. Precisely,
\begin{itemize}
	\item $\E_\alpha$ is positively invariant for $S_\alpha(t)$, that is, $S_\alpha(t)\E_\alpha\subset \E_\alpha$ for every $t\geq 0$;
	\item $\dim_{\V_\alpha}\, \E_\alpha<\infty$, that is, $\E_\alpha$ has finite fractal dimension in $\V_\alpha$;
	\item there exist an increasing function $\Q_\alpha:[0,\infty)\to[0,\infty)$ and $\kappa_\alpha>0$ such that,
	for any bounded set $B\subset \V_\alpha$ there holds
	$$
	\dist_{\V_\alpha}(S_\alpha(t)B,\E_\alpha)\leq \Q_\alpha (\|B\|_{\V_\alpha})\e^{-\kappa_\alpha t}.
	$$
\end{itemize}
\end{theorem}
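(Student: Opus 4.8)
The plan is to follow the method of Eden, Foias, Nicolaenko and Temam \cites{EFNT,Eden2}: produce an exponential attractor for the discrete semigroup generated by the time-$T$ map $S_\alpha(T)$ on a suitable compact positively invariant absorbing set, relying on a \emph{smoothing property} for the difference of two trajectories, and then lift the construction to continuous time. First I would upgrade $\B_\alpha$ to a set that is positively invariant and compact in $\V_\alpha$. A direct $\V_\alpha^2$-energy estimate on \eqref{eq:NSVABS}, identical to the one in the proof of Lemma \ref{lem:cpt} but carried out on the full solution $u$ (bounding $\l B(u,u),Au\r$ through \eqref{eq:b2} and using $\alpha\|u\|\leq M_1$ on $\B_\alpha$), shows that a ball $\D_\alpha(R)$ of $\V_\alpha^2$ with $R$ comparable to $r_\alpha$ is absorbing in $\V_\alpha^2$. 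Consequently the set
\[
\mathcal{M}_\alpha=\overline{\bigcup_{t\geq 0}S_\alpha(t)\D_\alpha(R)}^{\,\V_\alpha}
\]
is bounded in $\V_\alpha^2$, hence compact in $\V_\alpha$ by the compact embedding $\V_\alpha^2\hookrightarrow\hookrightarrow\V_\alpha$, and it is positively invariant and absorbing. All subsequent estimates are performed on $\mathcal{M}_\alpha$, where $\|u\|$ and $|Au|$ are bounded by constants depending only on $\alpha$, $r_\alpha$ and $M_1$.

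The core is a smoothing estimate for $S_\alpha(T)$ on $\mathcal{M}_\alpha$. Given $u_0,v_0\in\mathcal{M}_\alpha$, write $u=S_\alpha(\cdot)u_0$, $v=S_\alpha(\cdot)v_0$, $w=u-v$, and split $w=\bar w+\tilde w$ in the spirit of \S\ref{sub:dec}: let $\bar w$ solve
\[
\dot{\bar w}+\alpha^2 A\dot{\bar w}+\nu A\bar w+B(u,\bar w)=0,\qquad \bar w(0)=u_0-v_0,
\]
and let $\tilde w$ solve the same equation with right-hand side $-B(w,v)$ and $\tilde w(0)=0$. Testing the $\bar w$-equation with $\bar w$ and using \eqref{eq:b1} reproduces Lemma \ref{lem:decay}, giving the decay $\|\bar w(t)\|_{\V_\alpha}^2\leq\|u_0-v_0\|_{\V_\alpha}^2\e^{-\kappa_\nu t}$. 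For $\tilde w$ I would test with $A\tilde w$; the term $\l B(u,\tilde w),A\tilde w\r$ is absorbed exactly as in Lemma \ref{lem:cpt}, while the forcing is controlled via \eqref{eq:b3}, crucially using $v\in\V_\alpha^2$ on $\mathcal{M}_\alpha$,
\[
|\l B(w,v),A\tilde w\r|\leq c\|w\|\,\|v\|^{1/2}|Av|^{1/2}|A\tilde w|\leq \frac{\nu}{4}|A\tilde w|^2+\frac{c}{\nu}\|v\|\,|Av|\,\|w\|^2.
\]
A Gronwall argument, together with \eqref{eq:contdep} to bound $\|w(s)\|_{\V_\alpha}$ on $[0,T]$, yields $\|\tilde w(T)\|_{\V_\alpha^2}\leq\Lambda_\alpha(T)\|u_0-v_0\|_{\V_\alpha}$ for a finite constant $\Lambda_\alpha(T)$. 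Fixing $T$ so that $\e^{-\kappa_\nu T/2}\leq\tfrac14$ then gives the decomposition
\[
S_\alpha(T)u_0-S_\alpha(T)v_0=\bar w(T)+\tilde w(T),\qquad \|\bar w(T)\|_{\V_\alpha}\leq\tfrac14\|u_0-v_0\|_{\V_\alpha},
\]
with $\tilde w(T)$ bounded in the compactly embedded space $\V_\alpha^2$ by $\Lambda_\alpha(T)\|u_0-v_0\|_{\V_\alpha}$, which is exactly the smoothing property required by the abstract construction.

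With this property the abstract theorem furnishes a discrete exponential attractor $\E_\alpha^d\subset\mathcal{M}_\alpha$ for $S_\alpha(T)$, of finite fractal dimension in $\V_\alpha$ and attracting $\mathcal{M}_\alpha$ exponentially fast along the iterates $\{S_\alpha(nT)\}_{n\in\N}$. I would then set $\E_\alpha=\bigcup_{t\in[0,T]}S_\alpha(t)\E_\alpha^d$. Positive invariance is immediate, and finiteness of $\dim_{\V_\alpha}\E_\alpha$ follows because $(t,u_0)\mapsto S_\alpha(t)u_0$ is Lipschitz on $[0,T]\times\mathcal{M}_\alpha$: Lipschitz in $u_0$ uniformly for $t\in[0,T]$ by \eqref{eq:contdep}, and Lipschitz in $t$ since $\dot u=(I+\alpha^2A)^{-1}(g-\nu Au-B(u,u))$ is bounded in $\V_\alpha$ on $\mathcal{M}_\alpha$, so $\E_\alpha$ is a Lipschitz image of $[0,T]\times\E_\alpha^d$. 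The exponential attraction of every bounded $B\subset\V_\alpha$ in the third bullet then results from first absorbing $B$ into $\mathcal{M}_\alpha$ via Proposition \ref{prop:absorb} and then interpolating the discrete attraction over the interval $[0,T]$; the dependence of the entering time on $\|B\|_{\V_\alpha}$ is what produces the increasing function $\Q_\alpha$.

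The main obstacle is the smoothing estimate for $\tilde w$: one must gain a full space derivative on the difference of two trajectories, controlling its $\V_\alpha^2$-norm by the $\V_\alpha$-norm of the initial difference alone. This is precisely where the $\V_\alpha^2$-regularity of the invariant set is indispensable, since it allows the middle factor $v$ in $B(w,v)$ to be placed in $\dom(A)$ and estimated through \eqref{eq:b3}; balancing this forcing against the viscous dissipation $\nu|A\tilde w|^2$ while tracking the $\alpha$-dependence of $\Lambda_\alpha(T)$ is the delicate point of the computation.
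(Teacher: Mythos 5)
Your overall strategy (regular invariant set, smoothing decomposition of differences, abstract discrete-to-continuous construction) is the right one and largely parallels the paper, but there is one genuine gap: the claim that $\mathcal{M}_\alpha$ is \emph{absorbing}. The Voigt equation has no parabolic smoothing effect: writing $S_\alpha(t)u_0=L_\alpha(t)u_0+K_\alpha(t)u_0$ as in Section \ref{sub:dec}, the component $L_\alpha(t)u_0$ decays exponentially in $\V_\alpha$ but never gains regularity, so a trajectory issuing from $u_0\in\V_\alpha\setminus\V_\alpha^2$ never enters a set bounded in $\V_\alpha^2$. Your $\mathcal{M}_\alpha$ is therefore positively invariant and \emph{exponentially attracting}, but not absorbing, and the final step ``first absorbing $B$ into $\mathcal{M}_\alpha$ via Proposition \ref{prop:absorb}'' does not work: that proposition only gives absorption into $\B_\alpha$, which is not contained in $\mathcal{M}_\alpha$. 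To obtain the third bullet for arbitrary bounded $B\subset\V_\alpha$ you must chain two exponential attractions ($\B_\alpha\to\D_\alpha(r_\alpha)$ from Theorem \ref{thm:expattr1}, then $\mathcal{M}_\alpha\to\E_\alpha$ from the abstract construction) through the transitivity property of exponential attraction (Lemma \ref{lem:zelik}), which in turn requires the global-in-time Lipschitz estimate \eqref{eq:contdep} on the union of the relevant orbits. This is precisely the ``essential difficulty for hyperbolic-like equations'' the paper isolates, and it occupies Lemmas \ref{lem:zelik} and \ref{lem:pain3}; your proposal skips it.

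The smoothing estimate itself is fine but organized differently from the paper. You split the \emph{difference} $w=u-v$ into $\bar w+\tilde w$, with the transport term $B(u,\bar w)$ kept in the contracting part (so the decay is Lemma \ref{lem:decay} verbatim) and the coupling $B(w,v)$ treated as a forcing, estimated via \eqref{eq:b3} using the $\V_\alpha^2$-bound on $v$. The paper instead decomposes the \emph{semigroup}, $S_\alpha(t)=V_\alpha(t)+W_\alpha(t)$, with $V_\alpha(t)$ the purely linear Stokes--Voigt semigroup and the entire nonlinearity $B(u,u)$ in $W_\alpha(t)$; the smoothing for $W_\alpha(t)u_0-W_\alpha(t)v_0$ then uses the Agmon inequality and \eqref{eq:contdep}. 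Both closes give a constant $C_*\sim\alpha^{-5/2}\e^{c/\alpha^2}$; the paper's version has the advantage of fitting literally the hypotheses of Theorem \ref{lem:suffexp}, which asks for a decomposition $S(t_*)=V+W$ into \emph{maps} of the initial datum, whereas your $\bar w,\tilde w$ depend jointly on $(u_0,v_0)$ and require the (standard, but different) ``smoothing-for-differences'' variant of the abstract result. Neither point is fatal, but the absorption issue above must be repaired before the argument is complete.
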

As a byproduct, we have the following.

\begin{corollary}\label{cor:finitefrac}
The global attractor $\A_\alpha$ of $S_\alpha(t)$ has finite fractal dimension in $\V_\alpha$.
\end{corollary}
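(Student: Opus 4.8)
The plan is to apply the abstract construction of exponential attractors based on the \emph{smoothing property} (\cite{EFNT,Eden2}) to the discrete dynamical system generated by the time-$t_\ast$ map $S:=S_\alpha(t_\ast)$, for a conveniently fixed $t_\ast>0$, and then to lift the resulting discrete exponential attractor to the continuous semigroup. First I would fix $t_\ast\geq t_{\B_\alpha}$ so that, by Proposition~\ref{prop:absorb}, $S\B_\alpha\subseteq\B_\alpha$. A higher-order energy estimate (testing \eqref{eq:NSVABS} with $Au$ and controlling the nonlinearity exactly as in the proof of Lemma~\ref{lem:cpt}, using that $\|u\|$ stays bounded on $\B_\alpha$) shows that the forward orbit of the compact set $\D_\alpha(r_\alpha)$ remains uniformly bounded in $\V_\alpha^2$. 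Consequently
\[
\KK_\alpha:=\overline{\bigcup_{n\geq 0} S^n\D_\alpha(r_\alpha)}^{\,\V_\alpha}
\]
is bounded in $\V_\alpha^2$, hence compact in $\V_\alpha$ by the compact embedding $\V_\alpha^2\hookrightarrow\V_\alpha$, positively invariant under $S$, and attracting (it contains the exponentially attracting set of Theorem~\ref{thm:expattr1}). This $\KK_\alpha$ is the compact set on which the discrete construction is carried out.

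The core of the argument is the \emph{smoothing property} of $S$ on $\KK_\alpha$, namely the existence of a constant $\Lambda=\Lambda(\alpha,t_\ast)$ with
\[
\|S u_0-S v_0\|_{\V_\alpha^2}\leq \Lambda\,\|u_0-v_0\|_{\V_\alpha},\qquad \forall\, u_0,v_0\in\KK_\alpha.
\]
To establish it, set $u(t)=S_\alpha(t)u_0$, $v(t)=S_\alpha(t)v_0$ and $w=u-v$, which solves the difference equation
\[
\dot w+\alpha^2A\dot w+\nu Aw+B(u,w)+B(w,v)=0.
\]
Testing with $Aw$ in $\H$ yields a differential inequality for $\Phi(t)=\|w\|^2+\alpha^2|Aw|^2=\|w\|_{\V_\alpha^2}^2$, in which the two nonlinear terms are handled by means of \eqref{eq:b2}--\eqref{eq:b3} and the Young inequality: the term $\langle B(u,w),Aw\rangle$ is controlled by $\|u\|\,\|w\|^{1/2}|Aw|^{3/2}$, while $\langle B(w,v),Aw\rangle$ is controlled using the uniform $\V_\alpha^2$-bound on $v$ available on $\KK_\alpha$. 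Absorbing the top-order terms into $\nu|Aw|^2$ and invoking the $\V_\alpha$-control $\int_0^{t_\ast}\|w(s)\|^2\,\d s\leq C(\alpha,t_\ast)\|u_0-v_0\|_{\V_\alpha}^2$, which follows from the continuous dependence estimate \eqref{eq:contdep} together with the energy identity, a uniform Gronwall argument gives the claimed bound at $t=t_\ast$.

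With $\KK_\alpha$ compact and positively invariant in $\V_\alpha$, the compact embedding $\V_\alpha^2\hookrightarrow\V_\alpha$, and the smoothing property in hand, the abstract theorem of \cite{EFNT,Eden2} furnishes a discrete exponential attractor $\mathcal{M}_\alpha\subset\KK_\alpha$ for $S$: it is compact, satisfies $S\mathcal{M}_\alpha\subseteq\mathcal{M}_\alpha$, has finite fractal dimension in $\V_\alpha$, and attracts $\KK_\alpha$ at an exponential rate in discrete time; combined with the exponential attraction of Theorem~\ref{thm:expattr1}, this yields exponential attraction of every bounded subset of $\V_\alpha$. To pass to the continuous semigroup I would set
\[
\E_\alpha:=\bigcup_{t\in[0,t_\ast]} S_\alpha(t)\mathcal{M}_\alpha .
\]
Positive invariance of $\E_\alpha$ follows from $S\mathcal{M}_\alpha\subseteq\mathcal{M}_\alpha$ via the semigroup property. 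Finiteness of $\dim_{\V_\alpha}\E_\alpha$ follows from the Lipschitz continuity of the map $(t,u_0)\mapsto S_\alpha(t)u_0$ on $[0,t_\ast]\times\KK_\alpha$ --- Lipschitz in $u_0$ by \eqref{eq:contdep}, and in $t$ because $\dot u$ stays bounded in $\V_\alpha$ on the $\V_\alpha^2$-bounded set $\KK_\alpha$ --- since a Lipschitz image raises the fractal dimension by at most one. The continuous exponential attraction, with a suitable rate $\kappa_\alpha>0$ and function $\Q_\alpha$, is then obtained by combining the discrete exponential attraction with the uniform continuity of $S_\alpha(t)$ over $t\in[0,t_\ast]$, while boundedness of $\E_\alpha$ in $\V_\alpha^2$ is inherited from the uniform $\V_\alpha^2$-bound on the forward orbit of $\KK_\alpha$ established in the first paragraph.

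The main obstacle is the smoothing estimate of the second paragraph: gaining a full derivative for the difference $w$, i.e.\ passing from the $\V_\alpha$-bound on $w$ to a $\V_\alpha^2$-bound, while keeping the nonlinear terms $B(u,w)+B(w,v)$ under control. This is precisely where the uniform $\V_\alpha^2$-bounds on the two trajectories --- available only because $\KK_\alpha$ is bounded in $\V_\alpha^2$ --- are essential, and where the constant $\Lambda$, and hence the dimension estimate and the attraction rate, inevitably degenerate as $\alpha\to 0$, consistently with the Remark following \eqref{eq:contdep}.
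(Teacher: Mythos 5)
There is a genuine gap, and it sits exactly where you flag ``the main obstacle'': the Lipschitz smoothing estimate
\[
\|S u_0-S v_0\|_{\V_\alpha^2}\leq \Lambda\,\|u_0-v_0\|_{\V_\alpha}
\]
for the \emph{full} time-$t_\ast$ map is false for the Voigt equation, and the Gronwall argument you sketch cannot produce it. Testing the difference equation for $w=u-v$ with $Aw$ yields a differential inequality for $\Phi(t)=\|w(t)\|_{\V_\alpha^2}^2$ whose initial value is $\Phi(0)=\|u_0-v_0\|_{\V_\alpha^2}^2$; Gronwall then returns a bound in terms of $\|u_0-v_0\|_{\V_\alpha^2}$, not $\|u_0-v_0\|_{\V_\alpha}$, and there is no parabolic mechanism to forget this initial layer. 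Indeed, rewriting the equation as $\dot u=-(I+\alpha^2A)^{-1}\bigl[\nu Au+B(u,u)-g\bigr]$ shows that the generator is a bounded perturbation of a bounded operator, so $S_\alpha(t)$ does not regularize: already for the linear flow $\dot v+\alpha^2A\dot v+\nu Av=0$ one has $v(t)=\e^{-t\nu A(I+\alpha^2A)^{-1}}v(0)$, whose high-frequency modes decay only by the bounded factor $\e^{-t\nu/\alpha^2}$, so a difference $u_0-v_0$ concentrated on high Stokes modes has $\|S u_0-Sv_0\|_{\V_\alpha^2}$ comparable to $\|u_0-v_0\|_{\V_\alpha^2}$, which is not $O(\|u_0-v_0\|_{\V_\alpha})$ even for $u_0,v_0$ in a $\V_\alpha^2$-bounded invariant set. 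The auxiliary bound $\int_0^{t_\ast}\|w\|^2\,\d s\leq C\|u_0-v_0\|_{\V_\alpha}^2$ you invoke controls one derivative too few to absorb this.

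The paper circumvents exactly this obstruction (it is the ``hyperbolic'' difficulty mentioned before Lemma \ref{lem:zelik}) by \emph{splitting} rather than smoothing: $S_\alpha(t)=V_\alpha(t)+W_\alpha(t)$, where $V_\alpha(t)$ solves the linear homogeneous problem \eqref{eq:decay1} and is a strict contraction on $\V_\alpha$ for $t$ large (Lemma \ref{lem:decay1}) but does not smooth, while $W_\alpha(t)$ solves \eqref{eq:cpt1} with \emph{zero initial datum}, so that the difference $W_\alpha(t)u_0-W_\alpha(t)v_0$ starts from $0$; this is what makes the $\V_\alpha^2$-bound of Lemma \ref{lem:cpt1} linear in $\|u_0-v_0\|_{\V_\alpha}$. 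The abstract result then used (Theorem \ref{lem:suffexp}) takes precisely a contraction-plus-compact decomposition as input, not a smoothing property of the full map. Your construction of the invariant set, the Lipschitz continuity in $(t,u_0)$, and the passage from the discrete to the continuous attractor all match the paper and are fine; it is only the central estimate that must be replaced by the decomposition argument. (Also note that once Theorem \ref{thm:expattr} is available, the corollary itself is immediate from the minimality of $\A_\alpha$, which forces $\A_\alpha\subset\E_\alpha$.)
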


To the best of our knowledge, the
best estimate for the fractal dimension in $\H$ was found in \cite{KT}, and it is asymptotically growing
like $\alpha^{-6}$. The improvement here is that the fractal dimension in $\V_\alpha$ (and, in turn, with a scaling argument, in $\V$) is
proven to be finite. We discuss a quantitative estimate in the subsequent Section \ref{sec:fractal}.

\subsection{Higher order estimates}
We begin by showing that solutions  to the Navier-Stokes-Voigt system originating from regular initial data
remain regular uniformly in time.

\begin{lemma}\label{lem:regest}
Let $\alpha\in (0,1]$, and assume $u_0\in \D_\alpha(\rho)$ for some $\rho>0$. Then
\begin{equation}\label{eq:regest1}
\|S_\alpha(t)u_0\|^2_{\V^2_\alpha}\leq \rho \e^{-\kappa_\nu t} +r_\alpha, \qquad \forall t\geq 0.
\end{equation}
In particular, we have the uniform estimate
\begin{equation}\label{eq:regest2}
\sup_{t\geq 0}\sup_{u_0\in \D_\alpha(\rho)}\|S_\alpha(t)u_0\|^2_{\V^2_\alpha}\leq \rho+r_\alpha.
\end{equation}
\end{lemma}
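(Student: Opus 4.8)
The plan is to reproduce, almost verbatim, the argument of Lemma \ref{lem:cpt}, but applied now to the full solution $u(t)=S_\alpha(t)u_0$ instead of its compact component $w=K_\alpha(t)u_0$. Set
$$
\Phi_\alpha(t)=\|S_\alpha(t)u_0\|^2_{\V^2_\alpha}=\|u(t)\|^2+\alpha^2|Au(t)|^2,
$$
and multiply \eqref{eq:NSVABS} by $Au$ in $\H$. Since $\l \dot u,Au\r=\tfrac12\ddt\|u\|^2$ and $\alpha^2\l A\dot u,Au\r=\tfrac{\alpha^2}{2}\ddt|Au|^2$, this yields the identity
$$
\frac12\ddt\Phi_\alpha+\nu|Au|^2=\l g,Au\r-\l B(u,u),Au\r.
$$
As in the earlier estimates, this computation is formal and should be justified on Galerkin approximations and then passed to the limit.

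Next I would bound the right-hand side. The forcing term is handled by the Young inequality, $|\l g,Au\r|\leq \tfrac{\nu}{4}|Au|^2+\tfrac{1}{\nu}|g|^2$. For the nonlinear term I would apply the trilinear estimate \eqref{eq:b3} with second entry $v=u\in\dom(A)$ and third entry $Au$, obtaining the same bound used in Lemma \ref{lem:cpt}, namely $|\l B(u,u),Au\r|=|b(u,u,Au)|\leq c\|u\|^{3/2}|Au|^{3/2}$, followed by the Young inequality to absorb the top-order factor, $c\|u\|^{3/2}|Au|^{3/2}\leq \tfrac{\nu}{4}|Au|^2+\tfrac{c}{\nu^3}\|u\|^6$. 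The crucial input is the first dissipative estimate: since $\D_\alpha(\rho)\subset\B_\alpha$, inequality \eqref{eq:estind} gives $\alpha^2\|u(t)\|^2\leq E_\alpha(t)\leq M_1^2$, hence $\|u\|^6\leq M_1^6/\alpha^6$. Collecting the terms reproduces exactly the differential inequality of Lemma \ref{lem:cpt},
$$
\ddt\Phi_\alpha+\nu|Au|^2\leq c\frac{M_1^6}{\alpha^6\nu^3}+\frac{2}{\nu}|g|^2.
$$

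To close the estimate I would invoke the Poincar\'e inequality \eqref{eq:poinc} in the form $\Phi_\alpha\leq \tfrac{1+\lambda_1\alpha^2}{\lambda_1}|Au|^2$, so that $\nu|Au|^2\geq \kappa_\nu\Phi_\alpha$ with $\kappa_\nu$ given by \eqref{eq:kappanu}. This turns the inequality into $\ddt\Phi_\alpha+\kappa_\nu\Phi_\alpha\leq c\tfrac{M_1^6}{\alpha^6\nu^3}+\tfrac{2}{\nu}|g|^2$, and the standard Gronwall lemma then gives $\Phi_\alpha(t)\leq \Phi_\alpha(0)\e^{-\kappa_\nu t}+r_\alpha$, the constant matching precisely the definition \eqref{eq:ralpha} of $r_\alpha$. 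Because $u_0\in\D_\alpha(\rho)$ forces $\Phi_\alpha(0)=\|u_0\|^2_{\V^2_\alpha}\leq\rho$, this is exactly \eqref{eq:regest1}. The uniform bound \eqref{eq:regest2} follows at once by taking the supremum over $t\geq0$ and $u_0\in\D_\alpha(\rho)$ and using $\e^{-\kappa_\nu t}\leq1$.

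The only genuinely delicate point is the nonlinear estimate: one must test against $Au$ rather than $u$, which brings the full $\dom(A)$-norm into play, and the trilinear bound \eqref{eq:b3} only \emph{just} controls $|b(u,u,Au)|$ by $\|u\|^{3/2}|Au|^{3/2}$ before the Young splitting, so that the highest-order factor can still be absorbed into $\nu|Au|^2$. Everything else is a routine repetition of the dissipative and compactness estimates already established; the one essential new ingredient is the a priori $\V_\alpha$-bound \eqref{eq:estind}, which supplies $\|u\|^6\leq M_1^6/\alpha^6$ on the forcing side and is exactly what accounts for the $\alpha^{-6}$ dependence in $r_\alpha$.
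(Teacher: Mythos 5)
Your proposal is correct and follows essentially the same argument as the paper: testing \eqref{eq:NSVABS} with $Au$, bounding the trilinear term by $c\|u\|^{3/2}|Au|^{3/2}$ (indeed via \eqref{eq:b3}), using the uniform bound $\|u\|^2\leq M_1^2/\alpha^2$ from \eqref{eq:estind}, and closing with Poincar\'e and Gronwall exactly as in the proof of Lemma \ref{lem:cpt}. No gaps.
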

\begin{proof}
Taking the scalar product of \eqref{eq:NSVABS} with $Au$ in $\H$, we see that the functional
$$
\Psi_\alpha(t)=\| S_\alpha(t)u_0\|_{\V_\alpha^2}^2=\| u(t)\|^2+\alpha^2|Au(t)|^2
$$
satisfies the identity
$$
\frac12\ddt \Psi_\alpha +\nu |Au|^2=\l g,Au\r-\l B(u,u),Au\r.
$$
The standard estimate
$$
|\l g,Au\r|\leq \frac{\nu}{4}|Au|^2 +\frac{1}{\nu}|g|^2,
$$
together with
\begin{align*}
|\l B(u,u),Au\r| \leq c\|u\|\|u\|^{1/2}|Au|^{3/2}\leq \frac{\nu}{4}|Au|^2+ c\frac{M_1^6}{\alpha^6\nu^3},
\end{align*}
entails
\begin{equation}\label{eq:higherestA}
\ddt \Psi_\alpha +\nu |Au|^2\leq \kappa_\nu r_\alpha.
\end{equation}
Again, it is easy to see that by \eqref{eq:poinc} we have
$$
\nu |Au|^2\geq \kappa_\nu \Psi_\alpha,
$$
so that
$$
\ddt \Psi_\alpha +\kappa_\nu \Psi_\alpha\leq \kappa_\nu r_\alpha.
$$
The conclusion follows from the Gronwall lemma.
\end{proof}
Another way to state the above result (see \cite{KT}) is that the dynamical
system $S_\alpha(t)$ restricted to $\V^2_\alpha$ possesses a bounded
absorbing set.

\subsection{Invariant exponentially attracting sets}
One essential difficulty, when constructing exponential attractors for hyperbolic
equations, is to prove that the exponential attractors attract the bounded subsets of
the whole phase space, and not those starting from a subspace of the phase space only
(typically, consisting of more regular functions). This difficulty was overcome in
\cite{FGMZ} by proving the following transitivity property of the exponential attraction.

\begin{lemma}\label{lem:zelik}
Let $\KK_0,\KK_1,\KK_2\subset \V_\alpha$ be such that
$$
\dist_{\V_\alpha}(S_\alpha(t)\KK_0,\KK_1)\leq C_0\e^{-\omega_0 t}\qquad \text{and}\qquad
\dist_{\V_\alpha}(S_\alpha(t)\KK_1,\KK_2)\leq C_1\e^{-\omega_1 t}
$$
for some $C_0,C_1\geq 0$ and $\omega_0,\omega_1>0$. Assume also that for all
$u_0,v_0\in \bigcup_{t\geq 0}S_\alpha(t)\KK_i$
$$
\|S_\alpha(t)u_0-S_\alpha(t)v_0\|_{\V_\alpha}\leq K\e^{\kappa t}\|u_0-v_0\|_{\V_\alpha},
$$
for some $K\geq 0$ and $\kappa>0$. Then
$$
\dist_{\V_\alpha}(S_\alpha(t)\KK_0,\KK_2)\leq C\e^{-\omega t},
$$
where $C=KC_0+C_1$ and $\omega=\frac{\omega_0\omega_1}{\kappa+\omega_0+\omega_1}$.
\end{lemma}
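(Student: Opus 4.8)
The plan is to exploit the semigroup property together with the Lipschitz-type continuity hypothesis to transport the first attraction estimate forward in time and then chain it with the second. Fix $u_0\in\KK_0$ and $t\geq 0$, and write the time as a sum $t=s+(t-s)$, where the splitting point $s\in[0,t]$ will be optimized at the very end. First I would invoke the attraction of $\KK_1$: by definition of the Hausdorff semidistance, for every $\eps>0$ there exists $u_1\in\KK_1$ with $\|S_\alpha(s)u_0-u_1\|_{\V_\alpha}\leq C_0\e^{-\omega_0 s}+\eps$. Then, writing $S_\alpha(t)u_0=S_\alpha(t-s)S_\alpha(s)u_0$ and applying the continuity estimate over the remaining interval of length $t-s$, I obtain $\|S_\alpha(t)u_0-S_\alpha(t-s)u_1\|_{\V_\alpha}\leq K\e^{\kappa(t-s)}\big(C_0\e^{-\omega_0 s}+\eps\big)$.

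Next I would apply the attraction of $\KK_2$ to the point $S_\alpha(t-s)u_1$: again there exists $u_2\in\KK_2$ with $\|S_\alpha(t-s)u_1-u_2\|_{\V_\alpha}\leq C_1\e^{-\omega_1(t-s)}+\eps$. Combining the two bounds by the triangle inequality and letting $\eps\to0$ gives
$$
\dist_{\V_\alpha}(S_\alpha(t)u_0,\KK_2)\leq KC_0\,\e^{\kappa(t-s)-\omega_0 s}+C_1\,\e^{-\omega_1(t-s)}.
$$
It remains to choose $s$ so as to balance the two exponentials. Setting the exponents equal, $\kappa(t-s)-\omega_0 s=-\omega_1(t-s)$, yields $s=\frac{\kappa+\omega_1}{\kappa+\omega_0+\omega_1}\,t$ and hence $t-s=\frac{\omega_0}{\kappa+\omega_0+\omega_1}\,t$; a direct substitution shows that both exponents then equal $-\omega t$ with $\omega=\frac{\omega_0\omega_1}{\kappa+\omega_0+\omega_1}$, while the two prefactors add up to $C=KC_0+C_1$. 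Since $0<s<t$ for this choice, the splitting is admissible, and since the bound is uniform in $u_0$, taking the supremum over $u_0\in\KK_0$ yields the claim.

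The step requiring care is the application of the continuity hypothesis in the first paragraph: the Lipschitz estimate is only assumed to hold for data in $\bigcup_{t\geq0}S_\alpha(t)\KK_i$, so I must verify that both of its arguments lie there. Indeed $S_\alpha(s)u_0\in S_\alpha(s)\KK_0$ and $u_1\in\KK_1=S_\alpha(0)\KK_1$, so both belong to that union and the estimate is legitimately applicable with $\tau=t-s$. The only other subtlety is that the infimum defining the Hausdorff semidistance need not be attained; this is precisely why I select $u_1,u_2$ up to an arbitrary $\eps>0$ and discard it in the limit, so that no compactness of the $\KK_i$ is ever required. Everything else reduces to the elementary one-variable optimization carried out above.
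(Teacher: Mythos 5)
Your proof is correct. The paper itself states this lemma without proof, citing \cite{FGMZ}, and your argument --- split the time interval at an intermediate $s$, transport the first attraction estimate forward via the Lipschitz hypothesis, chain with the second via the triangle inequality, and optimize $s$ by equating the two exponents --- is precisely the standard proof given in that reference, including the correct verification that both arguments of the Lipschitz estimate lie in $\bigcup_{t\geq 0}S_\alpha(t)\KK_i$ and the $\eps$-approximation handling the possibly unattained infimum.
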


We saw previously that the set $\D_\alpha(r_\alpha)$ is a regular exponentially attracting set. We wish
to find an invariant set with the same properties. Let $t_e>0$ be the entering time of $\D_\alpha(r_\alpha)$
in the absorbing set $\B_\alpha$, and define
$$
\K_\alpha= \overline{\bigcup_{t\geq t_e} S_\alpha(t) \D_\alpha(r_\alpha)}^{\V_\alpha}.
$$
We claim that $\K_\alpha$ has the required properties. The invariance is fairly straightforward.
Due to the continuity of $S_\alpha(t)$, we have
\begin{align*}
S_\alpha(t)\K_\alpha&=S_\alpha(t)\overline{\bigcup_{\tau\geq t_e} S_\alpha(\tau) \D_\alpha(r_\alpha)}^{\V_\alpha}
\subset \overline{\bigcup_{\tau\geq t_e} S_\alpha(t+\tau) \D_\alpha(r_\alpha)}^{\V_\alpha}\\
&\subset \overline{\bigcup_{\tau\geq t_e} S_\alpha(\tau) \D_\alpha(r_\alpha)}^{\V_\alpha}=\K_\alpha.
\end{align*}
We turn our attention to the regularity of $\K_\alpha$.
\begin{lemma}\label{lem:pain1}
$\K_\alpha$ is a compact set in $\V_\alpha$, bounded in $\V^2_\alpha$. Precisely,
$$
\K_\alpha\subset \D_\alpha(2r_\alpha).
$$
\end{lemma}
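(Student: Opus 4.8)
The plan is to establish the inclusion $\K_\alpha\subset\D_\alpha(2r_\alpha)$ first at the level of the un-closed union $\bigcup_{t\geq t_e}S_\alpha(t)\D_\alpha(r_\alpha)$, by checking separately the two defining conditions of $\D_\alpha(2r_\alpha)$, and then to promote it to the $\V_\alpha$-closure by showing that $\D_\alpha(2r_\alpha)$ is already closed, in fact compact, in $\V_\alpha$. For the $\V_\alpha^2$-bound I would apply Lemma \ref{lem:regest} with $\rho=r_\alpha$: since every $u_0\in\D_\alpha(r_\alpha)$ satisfies $\|u_0\|^2+\alpha^2|Au_0|^2\leq r_\alpha$, estimate \eqref{eq:regest1} yields
$$\|S_\alpha(t)u_0\|^2_{\V^2_\alpha}\leq r_\alpha\e^{-\kappa_\nu t}+r_\alpha\leq 2r_\alpha,\qquad\forall t\geq 0,$$
so that the whole forward orbit of $\D_\alpha(r_\alpha)$ lies in the closed $\V^2_\alpha$-ball of radius $\sqrt{2r_\alpha}$. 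For the second condition I recall that $t_e$ is the entering time of the bounded set $\D_\alpha(r_\alpha)$ into the absorbing set $\B_\alpha$ supplied by Proposition \ref{prop:absorb}, whence $S_\alpha(t)\D_\alpha(r_\alpha)\subset\B_\alpha$ for all $t\geq t_e$. Intersecting the two inclusions gives
$$\bigcup_{t\geq t_e}S_\alpha(t)\D_\alpha(r_\alpha)\subset\big\{u\in\V^2_\alpha:\|u\|^2+\alpha^2|Au|^2\leq 2r_\alpha\big\}\cap\B_\alpha=\D_\alpha(2r_\alpha).$$

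It then remains to pass to the closure, and this is where the only real work lies. I would argue that $\D_\alpha(2r_\alpha)$ is compact in $\V_\alpha$, so that the $\V_\alpha$-closure of a subset of it cannot escape it. The factor $\B_\alpha$ is a closed ball of $\V_\alpha$, hence $\V_\alpha$-closed. The other factor is a closed bounded ball $B$ of $\V^2_\alpha=\dom(A)$; here I would invoke the compact embedding $\V^2_\alpha\hookrightarrow\V_\alpha$, which is the Rellich compactness of $\mathbf{H}^2(\Omega)\cap\V\hookrightarrow\V$ on the bounded domain $\Omega$, equivalently the compactness of $A^{-1}$. Being convex and norm-closed, $B$ is weakly sequentially compact in the Hilbert space $\V^2_\alpha$; since the compact embedding maps $\V^2_\alpha$-weakly convergent sequences to $\V_\alpha$-norm convergent ones, every sequence in $B$ admits a subsequence converging in $\V_\alpha$ to a point of $B$, so $B$ is compact in $\V_\alpha$. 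Consequently $\D_\alpha(2r_\alpha)$, being the intersection of a $\V_\alpha$-compact set with a $\V_\alpha$-closed set, is compact in $\V_\alpha$. Taking closures in the displayed inclusion then gives $\K_\alpha\subset\D_\alpha(2r_\alpha)$, and as a $\V_\alpha$-closed subset of a $\V_\alpha$-compact set, $\K_\alpha$ is itself compact in $\V_\alpha$ and bounded in $\V^2_\alpha$.

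The dynamical content of the statement is therefore entirely encoded in the regularity estimate of Lemma \ref{lem:regest} together with the choice of the entering time $t_e$, both of which make the un-closed inclusion immediate. The only genuinely delicate point I anticipate is the topological one, namely verifying the compact embedding $\V^2_\alpha\hookrightarrow\V_\alpha$ and deducing from it the $\V_\alpha$-closedness, indeed compactness, of the $\V^2_\alpha$-ball; this is exactly what legitimizes interchanging the closure with the inclusion and simultaneously delivers the compactness of $\K_\alpha$.
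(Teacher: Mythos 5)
Your proposal is correct and follows essentially the same route as the paper: the key estimate is Lemma \ref{lem:regest} with $\rho=r_\alpha$, and the passage to the closure rests on the compact embedding $\V^2_\alpha\hookrightarrow\V_\alpha$ together with weak compactness of balls in $\V^2_\alpha$ and weak lower semicontinuity of the norm, which the paper applies pointwise to limit points of $\K_\alpha$ and you repackage as the statement that $\D_\alpha(2r_\alpha)$ is already $\V_\alpha$-compact. Your treatment is in fact slightly more explicit than the paper's on the inclusion in the factor $\B_\alpha$ (via the entering time $t_e$), which the paper leaves implicit.
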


\begin{proof}
It suffices to show boundedness in $\V^2_\alpha$ thanks to the compact embedding of $\V^2_\alpha$ into
$\V_\alpha$ and
the fact that $\K_\alpha$ is clearly closed in $\V_\alpha$.
Let $w\in \K_\alpha$. Then, there exists a sequence $t_n\geq t_e$ and
$$
w_{n}\in S_\alpha(t_n) \D_\alpha(r_\alpha)
$$
such that $w_n\to w$ strongly in $\V_\alpha$. Lemma \ref{lem:regest} implies that
$$
\|w_n\|^2_{\V^2_\alpha} \leq 2r_\alpha.
$$
As a consequence, by weak compactness and the uniqueness of the limit we learn that
$$
w_n\rightharpoonup w \qquad \text{weakly in } \V_\alpha^2.
$$
The lower semicontinuity  of the norm with respect to weak convergence finally allows us to conclude that
$$
\|w\|^2_{\V^2_\alpha}\leq \liminf_{n\to\infty}\|w_{n}\|^2_{\V^2_\alpha}\leq 2r_\alpha.
$$
\end{proof}
It is left to show that $\K_\alpha$ is exponentially attracting.
This is possible by exploiting the transitivity property of the exponential attraction, stated above in Lemma
\ref{lem:zelik}.

\begin{lemma}\label{lem:pain3}
There holds
$$
\dist_{\V_\alpha}(S(t)\B_\alpha, \K_\alpha)\leq L_\alpha\e^{-\omega_\alpha t}, \qquad \forall t\geq 0,
$$
for some $L_\alpha\geq 0$ and some $\omega_\alpha>0$. Precisely,
$$
L_\alpha\sim \sqrt{r_\alpha}\sim \frac{1}{\alpha^3}
$$
and
$$
\omega_\alpha\sim \alpha^{2} .
$$
\end{lemma}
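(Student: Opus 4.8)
The plan is to obtain the desired exponential attraction by chaining together the two attraction properties already at our disposal, through the transitivity Lemma \ref{lem:zelik}, applied to the triple $\KK_0=\B_\alpha$, $\KK_1=\D_\alpha(r_\alpha)$, $\KK_2=\K_\alpha$. The first link is exactly Theorem \ref{thm:expattr1}, which provides $\dist_{\V_\alpha}(S_\alpha(t)\B_\alpha,\D_\alpha(r_\alpha))\leq \|\B_\alpha\|_{\V_\alpha}\e^{-\kappa_\nu t/2}$, so that in the notation of Lemma \ref{lem:zelik} one has $C_0=\|\B_\alpha\|_{\V_\alpha}$ and $\omega_0=\kappa_\nu/2$.

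The second link is essentially built into the definition of $\K_\alpha$. Since $S_\alpha(\tau)\D_\alpha(r_\alpha)\subset\K_\alpha$ for every $\tau\geq t_e$, the semidistance $\dist_{\V_\alpha}(S_\alpha(t)\D_\alpha(r_\alpha),\K_\alpha)$ vanishes for $t\geq t_e$; on the transient window $t\in[0,t_e]$ the orbit $S_\alpha(t)\D_\alpha(r_\alpha)$ stays bounded in $\V_\alpha$ (it remains comparable to $M_1$ by the absorbing estimate \eqref{eq:estind} and Lemma \ref{lem:regest}, as $\D_\alpha(r_\alpha)\subset\B_\alpha$), so the semidistance is bounded there by a constant $D$. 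Absorbing this finite-time plateau into an exponential — i.e. replacing the bound $D$ valid on $[0,t_e]$ by $D\e^{\omega_1 t_e}\e^{-\omega_1 t}$, which dominates $D$ on $[0,t_e]$ and $0$ afterwards — furnishes $\dist_{\V_\alpha}(S_\alpha(t)\D_\alpha(r_\alpha),\K_\alpha)\leq C_1\e^{-\omega_1 t}$ for all $t\geq0$, where I would simply take $\omega_1=\kappa_\nu/2$ and $C_1=D\e^{\omega_1 t_e}$.

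It remains to verify the uniform Lipschitz hypothesis of Lemma \ref{lem:zelik} on $\bigcup_{t\geq0}S_\alpha(t)\KK_i$. All three forward-orbit families are contained in a fixed $\V_\alpha$-ball of radius $r\sim M_1$ (each of them is eventually absorbed by $\B_\alpha$), so the continuous dependence estimate \eqref{eq:contdep} applies with that $r$: splitting $\e^{c(r+t)/\alpha^2}=\e^{cr/\alpha^2}\,\e^{ct/\alpha^2}$ gives $K=\e^{cr/\alpha^2}$ and $\kappa=c/\alpha^2$. Feeding $C_0,\omega_0,C_1,\omega_1,K,\kappa$ into Lemma \ref{lem:zelik} yields the claim with $L_\alpha=KC_0+C_1$ and $\omega_\alpha=\omega_0\omega_1/(\kappa+\omega_0+\omega_1)$. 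Reading off the $\alpha$-dependence, the denominator is dominated by $\kappa\sim\alpha^{-2}$ while $\omega_0,\omega_1$ stay of order one, whence $\omega_\alpha\sim\alpha^2$; the prefactor $L_\alpha$ is finite for each fixed $\alpha$, with its algebraic size governed by $\sqrt{r_\alpha}\sim\alpha^{-3}$. I expect the only genuinely delicate point to be precisely this uniform-in-$\alpha$ bookkeeping: the continuous-dependence exponent carries the factor $1/\alpha^2$ coming from the $\alpha^2A\dot u$ term, and it is exactly this factor, entering through $\kappa$, that forces the attraction rate to degrade like $\alpha^2$ as $\alpha\to0$; one must also take care to invoke the Lipschitz bound on the union of all three orbit families, and to check that these orbits genuinely remain $\V_\alpha$-bounded throughout the transient phase $[0,t_e]$.
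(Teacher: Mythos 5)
Your argument is essentially identical to the paper's proof: both chain Theorem \ref{thm:expattr1} with the near-trivial attraction of $\D_\alpha(r_\alpha)$ by $\K_\alpha$ (coming from the inclusion $S_\alpha(\tau)\D_\alpha(r_\alpha)\subset\K_\alpha$ for $\tau\geq t_e$ plus a bounded transient) via the transitivity Lemma \ref{lem:zelik}, with the Lipschitz hypothesis supplied by \eqref{eq:contdep} on the union of forward orbits inside a fixed multiple of $\B_\alpha$, and both read off $\omega_\alpha\sim\alpha^2$ from the $\kappa\sim\alpha^{-2}$ term dominating the denominator. The only cosmetic difference is that the paper bounds the transient semidistance by the $\V_\alpha$-diameter of $\D_\alpha(2r_\alpha)$, i.e.\ $c\sqrt{r_\alpha}$, where you use the $M_1$-bound from $\B_\alpha$; this is immaterial to the stated conclusion.
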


\begin{proof}
Clearly,
$$
S_\alpha(t)\D_\alpha(r_\alpha)\subset \K_\alpha, \qquad \forall t\geq t_e.
$$
Therefore, $\K_\alpha$ exponentially attracts $\D_\alpha(r_\alpha)$ at any rate.
Since  $\K_\alpha$ is contained in $\D_\alpha (2r_\alpha)$,
we can write
$$
\dist_{\V_\alpha}(S_\alpha(t)\D_\alpha(r_\alpha),\K_\alpha)\leq c \sqrt{r_\alpha} \e^{-\frac{\kappa_\nu}{2} t}, \qquad \forall t\geq 0,
$$
where $c>0$ is an absolute constant independent of $\alpha$. Moreover, from the continuous dependence
estimate \eqref{eq:contdep}, we find that
$$
\|S_\alpha(t)u_0-S_\alpha(t)v_0\|_{\V_\alpha}\leq c\,\e^{\tilde{c}_\alpha t}\|u_0-v_0\|_{\V_\alpha},
$$
where
$$
u_0,v_0\in \bigcup_{t\geq 0} \big[S_\alpha(t)\B_\alpha
\cup S_\alpha(t)\D_\alpha(r_\alpha)
\cup S_\alpha(t)\K_\alpha \big]\subset 2\B_\alpha
$$
and
$$
\tilde{c}_\alpha\sim \frac{1}{\alpha^{2}}.
$$
Therefore, by the transitivity of the exponential attraction in Lemma \ref{lem:zelik}, we can conclude the proof
of the lemma by using Theorem \ref{thm:expattr1}.
\end{proof}

\subsection{Exponential attractors}
We are now in the position to prove Theorem \ref{thm:expattr}, following the main steps
of the abstract result of Theorem \ref{lem:suffexp} in the appendix. We begin by decomposing
the solution semigroup in a different way compared to the previous one in Section \ref{sub:dec}.
For $u_0\in \K_\alpha$, we split the solution as
$$
S_\alpha(t)u_0=V_\alpha(t)u_0+W_\alpha(t)u_0
$$
where
$$
v(t)=V_\alpha(t)u_0 \qquad \text{and}\qquad w(t)=W_\alpha(t)u_0
$$
respectively solve
\begin{equation}\label{eq:decay1}
\begin{cases}
\dot{v}+\alpha^2A\dot{v} +\nu Av=0,\\
v(0)=u_0.
\end{cases}
\end{equation}
and
\begin{equation}\label{eq:cpt1}
\begin{cases}
\dot{w}+\alpha^2A\dot{w} +\nu Aw+B(u,u)=g,\\
w(0)=0.
\end{cases}
\end{equation}
Notice that $V_\alpha(t)$ is a linear semigroup which is exponentially stable. It is then standard to see that
the following holds.
\begin{lemma}\label{lem:decay1}
Let $\alpha\in(0,1]$ and $u_0\in \K_\alpha$. Then
$$
\| V_\alpha(t)u_0-V_\alpha(t) v_0\|_{\V_\alpha}\leq \e^{-\frac{\kappa_\nu}{2} t} \|u_0-v_0\|_{\V_\alpha},
$$
for each $t\geq 0$.
\end{lemma}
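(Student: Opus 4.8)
The plan is to exploit the \emph{linearity} of $V_\alpha(t)$, which is the crucial simplification distinguishing \eqref{eq:decay1} from the equation \eqref{eq:decay} defining $L_\alpha(t)$: since \eqref{eq:decay1} contains no term depending on the full solution $u$, the solution operator $V_\alpha(t)$ is a genuine linear semigroup. Consequently, setting $z(t)=V_\alpha(t)u_0-V_\alpha(t)v_0=V_\alpha(t)(u_0-v_0)$, the difference $z$ itself solves the homogeneous linear problem
$$
\dot z+\alpha^2A\dot z+\nu Az=0,\qquad z(0)=u_0-v_0,
$$
so that the assertion reduces to the exponential decay estimate for $V_\alpha(t)$ applied to the initial datum $u_0-v_0$. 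This is the same computation already carried out for the dissipative estimate of Section 3, now with vanishing forcing and, in contrast to Lemma \ref{lem:decay}, with no nonlinear contribution whatsoever to control.

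First I would test the equation for $z$ with $z$ in $\H$ (the formal manipulation being justified, as elsewhere, by a Galerkin procedure). Using that $A$ is self-adjoint and that $\langle \dot z,z\rangle+\alpha^2\langle A\dot z,z\rangle=\tfrac12\ddt\big[|z|^2+\alpha^2\|z\|^2\big]=\tfrac12\ddt\|z\|_{\V_\alpha}^2$, together with $\nu\langle Az,z\rangle=\nu\|z\|^2$, I obtain the energy identity
$$
\frac12\ddt\|z\|_{\V_\alpha}^2+\nu\|z\|^2=0.
$$
Next, invoking the lower bound in \eqref{eq:ineq}, namely $\alpha^2\|z\|^2\leq \|z\|_{\V_\alpha}^2\leq \frac{1+\lambda_1\alpha^2}{\lambda_1}\|z\|^2$, I convert the dissipation term into a multiple of the energy, $\nu\|z\|^2\geq \kappa_\nu\|z\|_{\V_\alpha}^2$ with $\kappa_\nu$ as in \eqref{eq:kappanu}. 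This yields the differential inequality
$$
\ddt\|z\|_{\V_\alpha}^2+2\kappa_\nu\|z\|_{\V_\alpha}^2\leq 0,
$$
and the Gronwall lemma then gives $\|z(t)\|_{\V_\alpha}^2\leq \e^{-2\kappa_\nu t}\|z(0)\|_{\V_\alpha}^2$. Taking square roots produces decay at the rate $\kappa_\nu$, which is in fact stronger than the claimed rate $\kappa_\nu/2$; since $\e^{-\kappa_\nu t}\leq \e^{-\kappa_\nu t/2}$ for $t\geq 0$, the stated estimate follows a fortiori.

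There is essentially no obstacle here, and this is precisely the point worth flagging: the nonlinear term $B(u,u)$ that makes the genuine Voigt dynamics hard has been entirely shifted into the companion component $W_\alpha(t)$ governed by \eqref{eq:cpt1}, leaving \eqref{eq:decay1} linear and autonomous. In particular, the decay of $V_\alpha(t)$ makes no use of the continuous-dependence estimate \eqref{eq:contdep}, so none of its $\alpha$-dependent exponential growth enters; the difference of two trajectories contracts at the same clean rate $\kappa_\nu$ that governs the linear dissipative estimates of Section 3. Should one prefer to mirror the derivation of Lemma \ref{lem:decay} verbatim, discarding half of the dissipation as in the passage from \eqref{eq:zerodiff} to \eqref{eq:firstdiff}, the identical argument produces directly the rate $\kappa_\nu/2$ in the exponent, exactly as stated.
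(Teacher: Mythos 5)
Your argument is correct and is precisely the standard energy estimate that the paper invokes without writing out (it only remarks that $V_\alpha(t)$ is a linear exponentially stable semigroup and that the claim is "standard"): testing the linear homogeneous equation for $z=V_\alpha(t)(u_0-v_0)$ with $z$ gives $\tfrac12\ddt\|z\|_{\V_\alpha}^2+\nu\|z\|^2=0$, and \eqref{eq:ineq} converts the dissipation into $\kappa_\nu\|z\|_{\V_\alpha}^2$. Your observation that the full dissipation actually yields the stronger rate $\e^{-\kappa_\nu t}$, of which the stated $\e^{-\kappa_\nu t/2}$ is a weaker consequence, is also accurate.
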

Concerning $W_\alpha(t)$, all that is needed is a standard continuous dependence estimate.
\begin{lemma}\label{lem:cpt1}
The following inequality holds
$$
\| W_\alpha(t)u_0-W_\alpha(t)v_0\|_{\V_\alpha^2}\leq  \frac{c}{\alpha^{5/2}}\e^{\frac{c}{\alpha^2} (1+t)}\|u_0-v_0\|_{\V_\alpha},
$$
for each $t\geq 0$.
\end{lemma}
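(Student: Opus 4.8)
The plan is to reduce the estimate to a single differential inequality for the $\V_\alpha^2$-norm of the difference $\bar w:=W_\alpha(t)u_0-W_\alpha(t)v_0$, and to close it by feeding in the continuous dependence estimate \eqref{eq:contdep} for the \emph{full} solutions. Writing $u_1=S_\alpha(t)u_0$, $u_2=S_\alpha(t)v_0$ and $\bar u=u_1-u_2$, I first subtract the two copies of \eqref{eq:cpt1} and use the bilinear decomposition $B(u_1,u_1)-B(u_2,u_2)=B(u_1,\bar u)+B(\bar u,u_2)$, so that
$$
\dot{\bar w}+\alpha^2A\dot{\bar w}+\nu A\bar w+B(u_1,\bar u)+B(\bar u,u_2)=0,\qquad \bar w(0)=0.
$$
Taking the scalar product with $A\bar w$ in $\H$ and using $\l\dot{\bar w},A\bar w\r+\alpha^2\l A\dot{\bar w},A\bar w\r=\tfrac12\ddt(\|\bar w\|^2+\alpha^2|A\bar w|^2)$, the quantity $\Phi(t)=\|\bar w(t)\|_{\V_\alpha^2}^2$ satisfies
$$
\frac12\ddt\Phi+\nu|A\bar w|^2=-\l B(u_1,\bar u),A\bar w\r-\l B(\bar u,u_2),A\bar w\r.
$$

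The heart of the matter is to estimate the two trilinear terms so as to produce precisely the power $\alpha^{-5/2}$. After Cauchy--Schwarz, Hölder and Sobolev embeddings (equivalently, \eqref{eq:b3} for the second term together with an Agmon-type bound $\|u_1\|_{\mathbf{L}^\infty}\leq c\|u_1\|^{1/2}|Au_1|^{1/2}$ for the first), both are controlled by $c\,\|u_i\|^{1/2}|Au_i|^{1/2}\,\|\bar u\|\,|A\bar w|$. The key observation, and the point where the bound improves over a naive one, is that the two factors carrying $u_i$ must be controlled by two \emph{different} a priori bounds: the dissipative energy estimate (cf.\ \eqref{eq:estind}, \eqref{eq:absorb}) forces $\alpha\|u_i\|\leq\|u_i\|_{\V_\alpha}\leq c$, hence $\|u_i\|\leq c\alpha^{-1}$, while the higher-order regularity $\K_\alpha\subset\D_\alpha(2r_\alpha)$ from Lemma \ref{lem:pain1} only gives $\alpha|Au_i|\leq\sqrt{2r_\alpha}$, hence $|Au_i|\leq c\alpha^{-4}$ in view of $r_\alpha\sim\alpha^{-6}$ from \eqref{eq:ralpha}. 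Multiplying, $\|u_i\|^{1/2}|Au_i|^{1/2}\leq c\alpha^{-5/2}$; using instead the weaker $\V_\alpha^2$-bound $\|u_i\|\leq\sqrt{2r_\alpha}$ for \emph{both} factors would cost an extra power $\alpha^{-1}$ and spoil the exponent. Young's inequality then yields
$$
\frac12\ddt\Phi+\nu|A\bar w|^2\leq\frac{\nu}{2}|A\bar w|^2+\frac{c}{\alpha^5}\|\bar u\|^2.
$$

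Next I would absorb the surviving dissipation and pass to a Gronwall inequality. As in the earlier lemmas, the Poincaré inequality \eqref{eq:poinc} gives $\nu|A\bar w|^2\geq\kappa_\nu\Phi$ with $\kappa_\nu$ as in \eqref{eq:kappanu}, so that
$$
\ddt\Phi+\kappa_\nu\Phi\leq\frac{c}{\alpha^5}\|\bar u\|^2.
$$
Into the right-hand side I insert \eqref{eq:contdep}: since $u_0,v_0\in\K_\alpha\subset 2\B_\alpha$ have $\V_\alpha$-norm bounded by a constant $r$ independent of $\alpha$, it gives $\|\bar u\|_{\V_\alpha}\leq\e^{c(r+t)/\alpha^2}\|u_0-v_0\|_{\V_\alpha}$, and passing from $\|\bar u\|_{\V_\alpha}$ to the bare seminorm $\|\bar u\|$ costs one factor $\alpha^{-1}$, i.e.
$$
\|\bar u\|^2\leq\frac{1}{\alpha^2}\e^{\frac{c}{\alpha^2}(1+t)}\|u_0-v_0\|_{\V_\alpha}^2.
$$

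Finally I would integrate via Gronwall starting from $\Phi(0)=0$. The resulting time integral $\int_0^t\e^{-\kappa_\nu(t-s)}\e^{cs/\alpha^2}\,\d s$ is dominated by $c\alpha^2\,\e^{ct/\alpha^2}$, because the exponent $c/\alpha^2$ dwarfs $\kappa_\nu=O(1)$; this contributes exactly one extra factor $\alpha^2$, so that $\Phi(t)\leq c\alpha^{-5}\e^{\frac{c}{\alpha^2}(1+t)}\|u_0-v_0\|_{\V_\alpha}^2$, and taking square roots gives the claim. The only genuine difficulty is the $\alpha$-bookkeeping in the nonlinear step, namely recognizing that $\|u_i\|$ and $|Au_i|$ should be bounded by the energy and the $\V_\alpha^2$-regularity estimates separately; the remainder is a routine energy/Gronwall argument, justified on the Galerkin level and passed to the limit.
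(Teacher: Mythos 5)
Your proposal is correct and follows essentially the same route as the paper: the same decomposition of the difference equation, testing with $A\bar w$, the Agmon-type bound for $\l B(u_1,\bar u),A\bar w\r$ and \eqref{eq:b3} for $\l B(\bar u,u_2),A\bar w\r$ combined with the mixed use of the $\V_\alpha$ energy bound and the $\V^2_\alpha$ bound $r_\alpha\sim\alpha^{-6}$ to obtain the $\alpha^{-5}$ coefficient, then the continuous dependence estimate \eqref{eq:contdep} and an integration in time that recovers a factor $\alpha^2$. The only (harmless) cosmetic difference is that you retain the dissipative term $\kappa_\nu\Phi$ via Poincar\'e, whereas the paper simply drops $\nu|A\bar w|^2\geq 0$ before integrating.
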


\begin{proof}
Let $w_1(t)=W_\alpha(t)u_0$ and $w_2(t)=W_\alpha(t)v_0$. The difference $w(t)=w_1(t)-w_2(t)$
satisfies
\begin{equation}\label{eq:diff1}
\begin{cases}
\dot{w}+\alpha^2A\dot{w} +\nu Aw+B(u_1,u)+B(u,u_2)=0,\\
w(0)=0,
\end{cases}
\end{equation}
where $u(t)=u_1(t)-u_2(t)=S_\alpha(t)u_0-S_\alpha(t)v_0$. Multiply the
above equation by $Aw$ in $\H$. The functional
$$
\Psi_\alpha(t)=\| W_\alpha(t)u_0\|_{\V_\alpha^2}^2=\| w(t)\|^2+\alpha^2|Aw(t)|^2
$$
satisfies the energy identity
$$
\frac12\ddt \Psi_\alpha +\nu |Aw|^2=-\l B(u_1,u),Aw\r-\l B(u,u_2),Aw\r.
$$
Thanks to Lemmas  \ref{lem:regest} and \ref{lem:pain1} and the Agmon inequality, we have
\begin{align*}
|\l B(u_1,u),Aw\r| &\leq c\|u_1\|_{L^\infty}\|u\||Aw|\leq c\|u_1\|^{1/2}|Au_1|^{1/2} \|u\||Aw|\\
&\leq \frac{\nu}{4}|Aw|^2+c\|u_1\||Au_1|\|u\|^2\leq \frac{\nu}{4}|Aw|^2+\frac{c}{\alpha^2}\|u_1\|_{\V_\alpha}\|u_1\|_{\V^2_\alpha}\|u\|^2\\
&\leq \frac{\nu}{4}|Aw|^2+c\frac{\sqrt{r_\alpha}}{\alpha^2}\|u\|^2\leq \frac{\nu}{4}|Aw|^2+\frac{c}{\alpha^5}\|u\|^2 .
\end{align*}
Similarly,
\begin{align*}
|\l B(u,u_2),Aw\r| &\leq c\|u\|\|u_2\|^{1/2}|Au_2|^{1/2}|Aw|\leq \frac{\nu}{4}|Aw|^2+c\|u_2\||Au_2|\|u\|^2\\
&\leq \frac{\nu}{4}|Aw|^2+\frac{c}{\alpha^2}\|u_2\|_{\V_\alpha}\|u_2\|_{\V^2_\alpha}\|u\|^2\leq \frac{\nu}{4}|Aw|^2+\frac{c}{\alpha^5}\|u\|^2.
\end{align*}
Thus, we end up with
$$
\ddt \Psi_\alpha \leq \frac{c}{\alpha^5}\|u\|^2\leq\frac{c}{\alpha^7}\|u\|_{\V_\alpha}^2.
$$
Thanks to the continuous dependence estimate \eqref{eq:contdep}, the above can be rewritten as
$$
\ddt \Psi_\alpha \leq \frac{c}{\alpha^7}\e^{\frac{c}{\alpha^2}(1+t)}\|u_0-v_0\|^2_{\V_\alpha}.
$$
Since $\Psi_\alpha(0)=0$, and integration over the time interval $(0,t)$ yields
$$
\Psi_\alpha(t) \leq \frac{c}{\alpha^5}\e^{\frac{c}{\alpha^2}(1+t)}\|u_0-v_0\|^2_{\V_\alpha},
$$
as we wanted.

\end{proof}

In order to prove the Lipschitz continuity of the solution map required by Theorem \ref{lem:suffexp},
we need a regularity estimate on the time derivative of trajectories originated from $\K_\alpha$. In fact,
since $\K_\alpha\subset \B_\alpha$, we will prove the following the following lemma for initial data
belonging to the absorbing set.

\begin{lemma}\label{lem:derivholder}
Let $u_0\in \B_\alpha$. We have
$$
\sup_{t\geq0}\|\dot u(t)\|_{\V_\alpha}\leq  \frac{c}{\alpha^{5/2}}.
$$
\end{lemma}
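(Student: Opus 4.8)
The plan is to read the bound off the equation directly, exploiting the elliptic structure carried by the operator $I+\alpha^2 A$, rather than setting up a differential inequality for $\|\dot u\|_{\V_\alpha}$. Writing $u(t)=S_\alpha(t)u_0$ and rearranging \eqref{eq:NSVABS}, the time derivative satisfies, at each fixed time, the relation
$$
(I+\alpha^2 A)\dot u = g-\nu Au -B(u,u).
$$
Since $u_0\in\B_\alpha$, the solution remains in $\V_\alpha$ for all times, so the right-hand side belongs to $\V^*$ and $\dot u\in\V$; the pairing of this identity with $\dot u$ in $\H$ is therefore legitimate (and can be made rigorous along the Galerkin scheme already used in Section 3). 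That pairing produces exactly the norm we want on the left,
$$
\|\dot u\|_{\V_\alpha}^2 = |\dot u|^2+\alpha^2\|\dot u\|^2 = \l g,\dot u\r-\nu\l A^{1/2}u,A^{1/2}\dot u\r-b(u,u,\dot u),
$$
so the whole matter reduces to bounding each of the three terms on the right by $\|\dot u\|_{\V_\alpha}$ times an explicit $\alpha$-dependent factor.

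The estimates are then routine, the only care being the conversions $|\dot u|\leq\|\dot u\|_{\V_\alpha}$ and $\|\dot u\|\leq\alpha^{-1}\|\dot u\|_{\V_\alpha}$. The forcing term gives $|\l g,\dot u\r|\leq|g|\,\|\dot u\|_{\V_\alpha}$; the linear term gives $\nu|\l A^{1/2}u,A^{1/2}\dot u\r|\leq\nu\|u\|\,\|\dot u\|\leq(\nu/\alpha)\|u\|\,\|\dot u\|_{\V_\alpha}$; and the trilinear term, through \eqref{eq:b2}, gives
$$
|b(u,u,\dot u)|\leq c\|u\|^2\|\dot u\|^{1/2}|\dot u|^{1/2}\leq \frac{c}{\alpha^{1/2}}\|u\|^2\,\|\dot u\|_{\V_\alpha}.
$$
Dividing through by $\|\dot u\|_{\V_\alpha}$ leaves
$$
\|\dot u\|_{\V_\alpha}\leq |g|+\frac{\nu}{\alpha}\|u\|+\frac{c}{\alpha^{1/2}}\|u\|^2.
$$
Finally I would insert the uniform dissipative bound \eqref{eq:estind}: for $u_0\in\B_\alpha$ one has $\|u(t)\|_{\V_\alpha}\leq M_1$, hence $\alpha\|u(t)\|\leq M_1$, and since $M_1\leq c$ for $\alpha\in(0,1]$, also $\|u(t)\|\leq c/\alpha$. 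Substituting makes the three contributions $O(1)$, $O(\alpha^{-2})$ and $O(\alpha^{-5/2})$ respectively; the last dominates on $(0,1]$, yielding $\|\dot u(t)\|_{\V_\alpha}\leq c\alpha^{-5/2}$ uniformly in $t\geq0$.

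I do not expect a genuine obstacle here, as the estimate is algebraic in $u$ at each fixed time and requires no Gronwall argument. The only delicate points are bookkeeping—tracking the powers of $\alpha$ correctly, where the critical exponent $\alpha^{-5/2}$ emerges from pairing the quadratic bound $\|u\|^2\sim\alpha^{-2}$ with the factor $\alpha^{-1/2}$ lost in passing from $\|\dot u\|$ to $\|\dot u\|_{\V_\alpha}$ inside the nonlinearity—and justifying the pairing with $\dot u$ at the level of regularity actually available (namely $u\in\V_\alpha$ only, with no parabolic smoothing for the Voigt flow), which is handled exactly as the formal computations earlier in the paper, via Galerkin truncation and passage to the limit.
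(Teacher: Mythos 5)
Your proof is correct and follows essentially the same route as the paper's: both pair equation \eqref{eq:NSVABS} with $\dot u$ at fixed time, bound the forcing, linear, and trilinear terms using the uniform estimate \eqref{eq:estind}, and identify the dominant $\alpha^{-5/2}$ contribution from the nonlinearity. The only cosmetic differences are that you divide through by $\|\dot u\|_{\V_\alpha}$ where the paper absorbs via Young's inequality, and you apply \eqref{eq:b2} directly to $b(u,u,\dot u)$ where the paper first swaps arguments via \eqref{eq:b1}; both yield the same exponent.
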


\begin{proof}
Multiply equation \eqref{eq:NSVABS} by $\dot u$, to get
\begin{equation}\label{eq:derest}
|\dot u|^2+\alpha^2\|\dot u\|^2=\l g,\dot u\r-\nu\l Au,\dot u\r-\l B(u,u),\dot u\r.
\end{equation}
On the one hand, we have
$$
|\l g,\dot u\r|\leq \frac12|\dot u|^2+ \frac12|g|^2
$$
and, in view of \eqref{eq:estind},
$$
\nu|\l Au,\dot u\r|\leq \frac{\alpha^2}{4}\|\dot u\|^2+ \frac{1}{\alpha^2}\|u\|^2\leq \frac{\alpha^2}{4}\|\dot u\|^2+ \frac{1}{\alpha^4}\|u\|_{\V_\alpha}^2
\leq \frac{\alpha^2}{4}\|\dot u\|^2+ \frac{c}{\alpha^4}.
$$
On the other hand, by \eqref{eq:b1}-\eqref{eq:b2} and again \eqref{eq:estind}, we obtain
\begin{align*}
|\l B(u,u),\dot u\r|&=|\l B(u,\dot u), u\r|\leq  c\|\dot u\||u|^{1/2}\|u\|^{3/2}\leq \frac{\alpha^2}{4}\|\dot u\|^2+\frac{c}{\alpha^2}|u|\|u\|^{3}\\
&\leq \frac{\alpha^2}{4}\|\dot u\|^2+\frac{c}{\alpha^5}\|u\|_{\V_\alpha}^4\leq \frac{\alpha^2}{4}\|\dot u\|^2+\frac{c}{\alpha^5}.
\end{align*}
Hence, \eqref{eq:derest} and the above estimates yield
$$
|\dot u|^2+\alpha^2\|\dot u\|^2\leq \frac{c}{\alpha^{5}},
$$
as we wanted.
\end{proof}

The immediate consequence of the previous lemma is the following result.

\begin{lemma}\label{lem:lipschitz}
Let $T>0$ be arbitrarily fixed. The map
$$
(t,u_0)\mapsto S_\alpha(t)u_0:[0,T]\times \K_\alpha\to \K_\alpha
$$
is Lipschitz continuous.
\end{lemma}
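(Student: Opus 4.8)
The plan is to estimate the joint variation by splitting it into a spatial contribution (variation in the initial datum, at fixed time) and a temporal contribution (variation in time, at fixed datum), invoking one previously established result for each. Given two points $(t,u_0),(s,v_0)\in[0,T]\times\K_\alpha$, the triangle inequality gives
\[
\|S_\alpha(t)u_0-S_\alpha(s)v_0\|_{\V_\alpha}\leq \|S_\alpha(t)u_0-S_\alpha(t)v_0\|_{\V_\alpha}+\|S_\alpha(t)v_0-S_\alpha(s)v_0\|_{\V_\alpha}.
\]

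For the first term I would invoke the continuous dependence estimate \eqref{eq:contdep}. Since $\K_\alpha\subset\B_\alpha$ is bounded in $\V_\alpha$ by Lemma \ref{lem:pain1}, there is a radius $r>0$ with $\|u_0\|_{\V_\alpha},\|v_0\|_{\V_\alpha}\leq r$ for all data in $\K_\alpha$, and for $t\in[0,T]$ the estimate reduces to
\[
\|S_\alpha(t)u_0-S_\alpha(t)v_0\|_{\V_\alpha}\leq \e^{c(r+T)/\alpha^2}\|u_0-v_0\|_{\V_\alpha},
\]
so the prefactor is a finite constant depending only on $T$ and $\alpha$. For the second term, assuming without loss of generality $s\leq t$ and writing $u(\tau)=S_\alpha(\tau)v_0$, I would represent the increment as
\[
S_\alpha(t)v_0-S_\alpha(s)v_0=\int_s^t\dot u(\tau)\,\d\tau,
\]
take norms, and apply Lemma \ref{lem:derivholder}, which is available precisely because $v_0\in\K_\alpha\subset\B_\alpha$, to obtain
\[
\|S_\alpha(t)v_0-S_\alpha(s)v_0\|_{\V_\alpha}\leq\int_s^t\|\dot u(\tau)\|_{\V_\alpha}\,\d\tau\leq\frac{c}{\alpha^{5/2}}\,|t-s|.
\]
Combining the two bounds produces the desired Lipschitz estimate for the product metric on $[0,T]\times\K_\alpha$, with Lipschitz constant controlled by $\max\{\e^{c(r+T)/\alpha^2},\,c\alpha^{-5/2}\}$.

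The computation is routine once the two auxiliary lemmas are in place; the only point requiring genuine care is the justification of the integral representation of $S_\alpha(t)v_0-S_\alpha(s)v_0$ in the $\V_\alpha$ topology. This is exactly where Lemma \ref{lem:derivholder} does the essential work: the uniform bound $\sup_{\tau\geq0}\|\dot u(\tau)\|_{\V_\alpha}<\infty$ shows that each orbit is Lipschitz, hence absolutely continuous, as a $\V_\alpha$-valued curve, so the fundamental theorem of calculus applies and the temporal increment is controlled linearly in $|t-s|$. I therefore expect this step, rather than the final assembly, to be the one to state carefully.
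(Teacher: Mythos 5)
Your proposal is correct and follows essentially the same route as the paper: the same triangle-inequality split, with the spatial increment controlled by the continuous dependence estimate \eqref{eq:contdep} and the temporal increment by integrating the uniform bound on $\dot u$ from Lemma \ref{lem:derivholder}. The extra care you devote to justifying the integral representation of the time increment is a reasonable refinement that the paper leaves implicit.
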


\begin{proof}
For $u_0,v_0\in \K_\alpha$ and $t_1,t_2\in [0,T]$ we have
$$
\|S_\alpha(t_1)u_0-S_\alpha(t_2)v_0\|_{\V_\alpha}\leq
\|S_\alpha(t_1)u_0-S_\alpha(t_1)v_0\|_{\V_\alpha}+\|S_\alpha(t_1)v_0-S_\alpha(t_2)v_0\|_{\V_\alpha}.
$$
The first term of the above inequality is handled by estimate \eqref{eq:contdep}. Concerning the second one,
Lemma \ref{lem:derivholder} implies
\begin{align*}
\|S_\alpha(t_1)v_0-S_\alpha(t_2)v_0\|_{\V_\alpha}&\leq \int_{t_1}^{t_2}\|\dot u(s)\|_{\V_\alpha}\d s\leq
c\frac{1}{\alpha^{5/2}}|t_1-t_2|.
\end{align*}
\end{proof}

Let us now conclude the proof of Theorem \ref{thm:expattr}, exploiting the abstract result
in Appendix \ref{app:exp}. It is clear that the set $\K_\alpha$ constructed above
has the required properties, since it is compact and invariant. Moreover, the Lipschitz continuity
devised in Lemma \ref{lem:lipschitz} takes care of the first assumption
in Theorem \ref{lem:suffexp}, no matter how $t_*$ will be chosen. We fix $t_*>0$ according to
Lemma \ref{lem:decay1}, so that
$$
 \e^{-\frac{\kappa_\nu}{2} t_*}=\frac18.
$$
In this way,
$$
\| V_\alpha(t_*)u_0-V_\alpha(t_*) v_0\|_{\V_\alpha}\leq\frac18 \|u_0-v_0\|_{\V_\alpha}, \qquad \forall t\geq0.
$$
Correspondingly, Lemma \ref{lem:cpt1} gives us
$$
\| W_\alpha(t_*)u_0-W_\alpha(t_*)v_0\|_{\V_\alpha^2}\leq C_*\|u_0-v_0\|_{\V_\alpha},
$$
with
\begin{equation}\label{eq:cstar}
C_*=\frac{c}{\alpha^{5/2}}\e^{\frac{c}{\alpha^2} (1+t_*)}\sim \frac{\e^{\alpha^{-2} }}{\alpha^{5/2}}.
\end{equation}
Therefore, we can apply Theorem \ref{lem:suffexp} with $V=V_\alpha(t_*)$ and $W=W_\alpha(t_*)$
and conclude the proof of Theorem \ref{thm:expattr}.

\subsection{Further comments}
We would like to conclude this section with some observations and possible further
development towards a better understanding of the longtime behavior of the Voigt
model.

\begin{remark}
In general, all the estimates devised for both global and exponential attractors depend explicitly
on $\alpha$, and, generically speaking, they are not robust as $\alpha$ vanishes.
This is due to the fact that, from the asymptotic behavior viewpoint, the limit problem
is far from being well understood. The asymptotic behavior of weak solutions to the
Navier-Stokes equations has attracted the efforts of many researchers over the years,
but, at the level of strong topologies, only partial results are available \cites{BA,CF,CTV,CTV2,KV,RS,RO,SE}. We will treat
this topic in the next sections, focusing on the relation between the Voigt model and the Navier-Stokes
equations.
\end{remark}

\begin{remark}
Another interesting research direction would lean towards a better understanding
of the robustness of the exponential attractors $\E_\alpha$. We expect the exponential
attractors to satisfy a H\"older continuity property with respect to the parameter $\alpha$,
at least on compact subintervals of $(0,1]$.
Specifically, for every $\alpha_0\in(0,1]$, we expect that there exist positive constants $C_0,\omega_0$ such that
$$
\sup_{\alpha_1,\alpha_2\in [\alpha_0,1]}
\max \big\{ \dist_{\V_{\alpha_1}}(\E_{\alpha_1},\E_{\alpha_2}),
\dist_{\V_{\alpha_1}}(\E_{\alpha_2},\E_{\alpha_1})\big\}
\leq C_0(\alpha_1-\alpha_2)^{\omega_0},
$$
with $\alpha_1>\alpha_2$. Again, a much harder problem would be to
allow $\alpha_0$ to be 0 in the above estimate, obtaining a result for the
Navier-Stokes equations as well.
\end{remark}

\section{Fractal dimension estimates}\label{sec:fractal}

\noindent Theorem \ref{thm:expattr} establishes the existence of exponential
attractors for $S_\alpha(t)$, for any $\alpha\in(0,1]$. As explicitly stated in
Theorem \ref{lem:suffexp}, the fractal dimension of $\E_\alpha$ can be estimated
in terms of $\alpha$ by knowing the number of $\V_\alpha$-balls of radius
$\eps_\alpha\sim \alpha^{5/2}\e^{-\alpha^{-2}}$ needed to cover the unit ball of $\V^2_\alpha$.
Indeed, this is the order of the constant $C_*$ from \eqref{eq:cstar} and Lemma \ref{lem:cpt1}.
In this case, the dimension estimate for the exponential attractor constructed by means of
Theorem \ref{lem:suffexp} will be exponential-like, and precisely
$$
\dim_{\V_\alpha}\,\A_\alpha\leq \dim_{\V_\alpha}\, \E_\alpha\sim \frac{\e^{1/\alpha^{2}}}{\alpha^{21/2}}.
$$
However, this approach ignores the differentiability of the semigroup $S_\alpha(t)$, which is
readily available from \cite{KT}*{Theorem 5.1}, and so it leads to non-optimal dimensional
estimates of the exponential attractor. An alternative construction of exponential attractors,
based on the method of Lyapunov exponents, is presented in \cites{Eden1, Eden2}. This approach
has the advantage that the dimension estimates for the exponential attractor, obtained through the
Lyapunov exponents, is shown to be optimal in terms of dimensionless parameters as in the case
of the 2D Navier-Stokes equations \cite{Eden1}*{Section 4}. For instance, for the latter, in
\cite{Eden1}*{Sections 4.1, 4.2} it was established the existence of an exponential attractor which
admits the same fractal dimension estimates as for the corresponding global attractor.

Our goal in this section is to establish an estimate for the fractal dimension
in $\V_\alpha$ for the global attractors $\A_\alpha$. This improves the best-known result in \cite{KT} by
producing a ``better" estimate as $\alpha\to 0$ (see final Remark \ref{important}). Using the above described approach,
an estimate in $\V_\alpha$ on the exponential attractors $\E_\alpha$ follows (see Theorem \ref{thm:exx}).

As a first step towards the fractal dimension estimate, we fix $\alpha\in (0,1]$ and
a solution $u$ to \eqref{eq:NSVABS} belonging to the global attractor $\A_\alpha$.
In this way, thanks to estimates \eqref{eq:absorb2}-\eqref{eq:estind}, we obtain that
\begin{equation}\label{eq:lin2}
\sup_{t\geq 0}\|u(t)\|_{\V_\alpha} \leq M_1=
\left[\frac{3(1+\lambda_1\alpha^2)|g|^2}{\nu^2\lambda_1^2}\right]^{1/2}, \quad \limsup_{T\to\infty}\frac{1}{T}\int_0^T \|u(t)\|^2 \d t\leq  K_1:=\frac{|g|^2}{\nu^2\lambda_1} .
\end{equation}
Setting
$$
G^2=I+\alpha^2 A
$$
and
$$
\hat{u}=Gu,
$$
we can rewrite the Navier-Stokes-Voigt equations \eqref{eq:NSVABS} as
\begin{equation}\label{eq:hat}
\begin{cases}
\displaystyle\dot{\hat{u}}=-\frac{\nu}{\alpha^2}\hat{u}+\frac{\nu}{\alpha^2}G^{-2}\hat{u}-G^{-1}B(G^{-1}\hat{u},G^{-1}\hat{u})+G^{-1}g,\\
\hat{u}(0)=\hat{u}_0=Gu_0.
\end{cases}
\end{equation}
Problem \eqref{eq:hat} is completely equivalent to the original Voigt model \eqref{eq:NSVABS}. However, it is well-posed in
$\H$, due to the fact that $G:\V_\alpha\to \H$ and $G^{-1}:\H\to\V_\alpha$ are isometries, namely
$$
|Gw|=\|w\|_{\V_\alpha}, \qquad |w|=\|G^{-1}w\|_{\V_\alpha}.
$$
More precisely, equations \eqref{eq:hat} generate a strongly continuous semigroup of solution operators
$$
\widehat{S}_\alpha (t):\H\to\H, \qquad \hat{u}_0\mapsto \hat{u}(t)=\widehat{S}_\alpha (t) \hat{u}_0,
$$
which are linked to the original semigroup $S_\alpha(t)$ through the relation
$$
\widehat{S}_\alpha (t)=G^{-1}S_\alpha(t)G.
$$
Concerning its longtime behavior, $\widehat{S}_\alpha(t)$ possesses the global attractor $\widehat{\A}_\alpha$, which can be seen to
satisfy
$$
\widehat{\A}_\alpha=G\A_\alpha.
$$
At first, it turns out to be more convenient to produce an estimate  on the fractal dimension in $\H$ for the attractor $\widehat{\A}_\alpha$.
As it will be clear in the next paragraph, an estimate in $\V_\alpha$ of the global attractor $\A_\alpha$ will follow in a straightforward manner.
The equation of linear variation corresponding to \eqref{eq:hat} reads
$$
\dot{w}=L(\hat{u};t)w
$$
where
\begin{equation}\label{eq:lin1}
L(\hat{u};t)w=-\frac{\nu}{\alpha^2}w+\frac{\nu}{\alpha^2} G^{-2}w-G^{-1} B(G^{-1}w,G^{-1}\hat{u})-G^{-1}B(G^{-1} \hat{u},G^{-1} w).
\end{equation}
The following proposition is crucial for our purposes.
\begin{proposition}\label{prop:lineariz}
Let $w\in \H$. Then
$$
\l L(\hat{u};t)w, w\r\leq-h_0|w|^2+h_1(t) |G^{-1}w|^2,
$$
where
$$
h_0=\frac{\nu}{2\alpha^2},\qquad h_1(t)=\frac{1}{\alpha^2}\left(\nu+c\frac{M_1^2}{\nu^3}\|u(t)\|^2\right) .
$$
\end{proposition}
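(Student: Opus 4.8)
The plan is to test the operator $L(\hat u;t)$ against $w$ directly, exploiting that $G=(I+\alpha^2A)^{1/2}$ is self-adjoint and strictly positive, so that $G^{-1}$ and $G^{-2}$ are self-adjoint as well. First I would expand $\l L(\hat u;t)w,w\r$ term by term. The term $-\frac{\nu}{\alpha^2}w$ produces $-\frac{\nu}{\alpha^2}|w|^2$ at once. For the second term, self-adjointness of $G^{-1}$ gives $\frac{\nu}{\alpha^2}\l G^{-2}w,w\r=\frac{\nu}{\alpha^2}|G^{-1}w|^2$, which already accounts for the $\frac{\nu}{\alpha^2}|G^{-1}w|^2$ part of $h_1(t)$. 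For the two trilinear terms I would set $\phi=G^{-1}w$ and use $G^{-1}\hat u=G^{-1}Gu=u$ together with self-adjointness of $G^{-1}$ to rewrite them as $-b(\phi,u,\phi)-b(u,\phi,\phi)$.

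The key observation is that the second of these vanishes: since $w\in\H$ we have $\phi=G^{-1}w\in\V_\alpha=\V$, so the orthogonality relation \eqref{eq:b1} yields $b(u,\phi,\phi)=0$. Thus only $-b(\phi,u,\phi)$ survives, and the whole matter reduces to bounding $|b(\phi,u,\phi)|$ by $\frac{\nu}{2\alpha^2}|w|^2$ plus a suitable multiple of $\|u\|^2|G^{-1}w|^2$. To this end I would apply \eqref{eq:b2} in the form $|b(\phi,u,\phi)|\le c\|u\|\,\|\phi\|^{3/2}|\phi|^{1/2}$, and then invoke the isometry $\|G^{-1}w\|_{\V_\alpha}=|w|$ recorded in the text, which furnishes the two crucial consequences $|\phi|=|G^{-1}w|$ and $\alpha^2\|\phi\|^2\le|w|^2$, that is, $\|\phi\|^2\le|w|^2/\alpha^2$.

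Finally I would split the product by Young's inequality with conjugate exponents $4/3$ and $4$, writing $c\|u\|\|\phi\|^{3/2}|\phi|^{1/2}\le\delta\|\phi\|^2+\frac{c}{\delta^3}\|u\|^4|\phi|^2$, and choose $\delta=\nu/2$, so that $\delta\|\phi\|^2\le\frac{\nu}{2\alpha^2}|w|^2$ combines with the first linear term to leave exactly $-h_0|w|^2=-\frac{\nu}{2\alpha^2}|w|^2$. The remaining contribution is brought to the advertised form using \eqref{eq:estind}, which gives $\|u\|^2\le M_1^2/\alpha^2$ and hence $\|u\|^4|\phi|^2\le\frac{M_1^2}{\alpha^2}\|u\|^2|G^{-1}w|^2$; this is precisely the nonlinear part of $h_1(t)$, the factor $\nu^{-3}$ originating from the Young constant $\delta^{-3}$ at $\delta\sim\nu$. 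I expect the main obstacle to be organizational rather than analytic: spotting the cancellation of $b(u,\phi,\phi)$ and, above all, choosing the interpolation split so that one factor of $\|\phi\|$ is absorbed into $|w|$ (through $\|\phi\|\le|w|/\alpha$) while the factor $|\phi|$ is retained as $|G^{-1}w|$, with the powers of $\alpha$, $\nu$ and $M_1$ landing exactly on the stated $h_0$ and $h_1(t)$.
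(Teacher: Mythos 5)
Your proposal is correct and follows essentially the same route as the paper: the same cancellation $b(u,G^{-1}w,G^{-1}w)=0$ (which the paper uses silently when writing its energy identity), the same application of \eqref{eq:b2}, the identity $\alpha^2\|G^{-1}w\|^2+|G^{-1}w|^2=|w|^2$, and the same Young splitting with exponents $4/3$ and $4$ tuned so that the $\|G^{-1}w\|^2$ contribution is absorbed into $\frac{\nu}{2\alpha^2}|w|^2$. The only difference is the order in which you apply Young's inequality and the bound $\|G^{-1}w\|\le|w|/\alpha$, which is immaterial.
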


\begin{proof}
By direct calculation, it is not hard to see that
\begin{equation}\label{eq:GGG}
\| G^{-1}w\|^2+\frac{1}{\alpha^2}|G^{-1}w|^2=\frac{1}{\alpha^2}|w|^2.
\end{equation}
Moreover,
\begin{equation}\label{eq:GGG2}
\l L(\hat{u};t)w, w\r=-\frac{\nu}{\alpha^2}|w|^2+\frac{\nu}{\alpha^2} |G^{-1}w|^2-\l  B(G^{-1}w,G^{-1}\hat{u}),G^{-1}w\r.
\end{equation}
We now estimate the trilinear term above as follows.  Noting that $G^{-1}\hat{u}=u$, we obtain
$$
|\l  B(G^{-1}w,G^{-1}\hat{u}),G^{-1}w\r| \leq c|G^{-1}w|^{1/2}\|G^{-1}w\|^{3/2} \|u\|,
$$
and by means of $\eps$-Young's inequality and \eqref{eq:GGG}, there holds
\begin{align*}
|\l  B(G^{-1}w,G^{-1}\hat{u}),G^{-1}w\r| &\leq c |G^{-1}w|^{1/2}\|G^{-1}w\|^{3/2} \|u\| \\
&\leq\frac{c}{\alpha^{3/2}} |G^{-1} w|^{1/2}|w|^{3/2}\|u\|\\
&\leq \frac{3}{4} \eps |w|^2 +\frac{c}{\alpha^6} |G^{-1}w|^2\|u\|^4 \frac{1}{4\eps^3}.
\end{align*}
Setting $\eps>0$ as $\eps= \frac{2\nu}{3\alpha^2}$, we find that
$$
|\l  B(G^{-1}w,G^{-1}\hat{u}),G^{-1}w\r| \leq \frac{\nu}{2\alpha^2} |w|^2 +\frac{c}{\nu^3} \|u\|^4|G^{-1}w|^2.
$$
In light of the first inequality in \eqref{eq:lin2} and
$$
 \|u\|^4=\|u\|^2\|u\|^2\leq \frac{1}{\alpha^2}\|u\|_{\V_\alpha}^2\|u\|^2\leq \frac{M_1^2}{\alpha^2}\|u\|^2,
$$
we end up with the estimate
\begin{equation}\label{eq:GGG3}
|\l  B(G^{-1}w,G^{-1}\hat{u}),G^{-1}w\r| \leq\frac{\nu}{2\alpha^2} |w|^2 +c\frac{M_1^2}{\alpha^2\nu^3} \|u\|^2|G^{-1}w|^2.
\end{equation}
Going back to \eqref{eq:GGG2}, this implies that
$$
\l L(\hat{u};t)w, w\r\leq-\frac{\nu}{2\alpha^2}|w|^2+\frac{1}{\alpha^2}\left(\nu+c\frac{M_1^2}{\nu^3}\|u\|^2\right) |G^{-1}w|^2,
$$
which is what we wanted to prove.
\end{proof}

The main result of this section is the following.
\begin{theorem}\label{thm:fractunif}
For every $\alpha\in (0,1]$, the global attractors $\A_\alpha$ have finite fractal dimension in $\V_\alpha$. Precisely,
\begin{equation}\label{eq:lin3}
\dim_{\V_\alpha}\, \A_\alpha \leq \frac{c}{\alpha^3}\left[1+\frac{1+\lambda_1\alpha^2}{\nu^8\lambda_1^4}|g|^4\right]^{3/2},
\end{equation}
where $c>0$ is a dimensionless scale invariant constant.
\end{theorem}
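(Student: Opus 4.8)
The plan is to bound the dimension through the Constantin--Foias--Temam theory of global Lyapunov exponents, carried out for the transformed problem \eqref{eq:hat} in $\H$ rather than for \eqref{eq:NSVABS} directly in $\V_\alpha$. Since $G:\V_\alpha\to\H$ is an isometry and $\widehat{\A}_\alpha=G\A_\alpha$, fractal dimension is preserved under $G$, so that $\dim_{\V_\alpha}\A_\alpha=\dim_\H\widehat{\A}_\alpha$ and it suffices to estimate the latter. The abstract machinery requires $\widehat{S}_\alpha(t)$ to be uniformly quasi-differentiable on $\widehat{\A}_\alpha$, with linearization \eqref{eq:lin1}; this follows from the differentiability established in \cite{KT}*{Theorem 5.1}, transported through $G$, and I would simply invoke it.

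The core of the argument is the estimate of the $m$-dimensional trace numbers. Given an orthonormal family $\varphi_1,\dots,\varphi_m$ in $\H$ spanning the range of the projection $Q_m(s)$ carried along the linearized flow, Proposition \ref{prop:lineariz} yields, pointwise in time,
$$
\sum_{j=1}^m\l L(\hat u;s)\varphi_j,\varphi_j\r\leq -m\,h_0+h_1(s)\sum_{j=1}^m|G^{-1}\varphi_j|^2,
$$
because $|\varphi_j|=1$. The key observation is that $\sum_{j=1}^m|G^{-1}\varphi_j|^2=\sum_{j=1}^m\l G^{-2}\varphi_j,\varphi_j\r$ is the trace of the positive operator $G^{-2}$ over an $m$-dimensional subspace, which by the generalized min--max principle does not exceed the sum of its $m$ largest eigenvalues, namely $\sum_{k=1}^m(1+\alpha^2\lambda_k)^{-1}$. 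Invoking the three-dimensional Weyl asymptotics $\lambda_k\geq c\lambda_1 k^{2/3}$ for the Stokes operator, this spectral sum is controlled by $c\,\alpha^{-2}\lambda_1^{-1}m^{1/3}$.

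I would then time-average. Using the bound $\limsup_{T\to\infty}\frac1T\int_0^T\|u(t)\|^2\,\d t\leq K_1$ from \eqref{eq:lin2} and substituting the explicit values of $M_1$ and $K_1$, the time-average $\bar h_1$ of $h_1$ obeys $\bar h_1\leq \nu\alpha^{-2}(1+c\,(1+\lambda_1\alpha^2)|g|^4\nu^{-8}\lambda_1^{-3})$. Collecting everything, the global Lyapunov sum satisfies
$$
q_m\leq -m\,h_0+\bar h_1\sum_{k=1}^m\frac{1}{1+\alpha^2\lambda_k}\leq -\frac{\nu}{2\alpha^2}\,m+\frac{c\,\bar h_1}{\alpha^2\lambda_1}\,m^{1/3}.
$$
Locating the least integer $m$ for which $q_m<0$ amounts to requiring $m^{2/3}\gtrsim \bar h_1/(\nu\lambda_1)$, that is $m\gtrsim(\bar h_1/(\nu\lambda_1))^{3/2}$; inserting the bound for $\bar h_1$ produces exactly the right-hand side of \eqref{eq:lin3}. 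The Constantin--Foias--Temam criterion then gives $\dim_\H\widehat{\A}_\alpha\leq m$, whence \eqref{eq:lin3}.

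The delicate point, and the very source of the improvement from $\alpha^{-6}$ in \cite{KT} to $\alpha^{-3}$ here, lies in the competition in the trace estimate: the uniform dissipation rate $h_0\sim\alpha^{-2}$ is pitted against the trace of $G^{-2}$, whose eigenvalues decay like $(\alpha^2\lambda_k)^{-1}\sim\alpha^{-2}k^{-2/3}$. It is precisely the summation rate $k^{-2/3}$ dictated by the 3D Weyl law that converts the balance $m\,h_0\sim\bar h_1\sum_k(1+\alpha^2\lambda_k)^{-1}$ into the power $m^{2/3}$, hence the exponent $3/2$ and the scaling $\alpha^{-3}$; keeping the powers of $\alpha$ aligned across $h_0$, $h_1$ and the spectral sum is where the bookkeeping must be done carefully, the remainder being the standard volume-contraction argument.
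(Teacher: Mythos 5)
Your proposal is correct and follows essentially the same route as the paper: the same conjugation by the isometry $G$ to reduce to $\dim_\H\widehat{\A}_\alpha$, the same trace estimate from Proposition \ref{prop:lineariz} combined with the bound $\sum_{j}|G^{-1}\phi_j|^2\leq\sum_{k}(1+\alpha^2\lambda_k)^{-1}$ (the paper cites Lemma 6.2 of \cite{T3} for exactly the min--max step you describe), the same Stokes eigenvalue lower bound \eqref{eq:lowerstokes}, and the same time-averaging against $K_1$ leading to the balance $m^{2/3}\gtrsim \bar h_1/(\nu\lambda_1)$. The only deviation is cosmetic bookkeeping of $\lambda_1$ inside the bracket, which matches the paper's own level of precision.
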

It is a well-know fact that fractal dimension estimates are preserved by Lipschitz maps (see e.g.
\cite{T3}*{Proposition 3.1}) and since $G$ and $G^{-1}$ are isometries, in our particular case we have
$$
\dim_{\V_\alpha}\, \A_\alpha=\dim_{\V_\alpha}\, G^{-1}\widehat{\A}_\alpha= \dim_\H\, \widehat{\A}_\alpha.
$$
Consequently, Theorem \ref{thm:fractunif} follows from the fact that $\widehat{\A}_\alpha$ has finite fractal dimension in $\H$, with the same bound \eqref{eq:lin3},
as proven in the following proposition.

\begin{proposition}\label{prop:fractunif}
For every $\alpha\in (0,1]$, the global attractors $\widehat{\A}_\alpha$ have finite fractal dimension in $\H$, with
\begin{equation}\label{eq:lin4}
\dim_{\H}\, \widehat{\A}_\alpha \leq \frac{c}{\alpha^3}\left[1+\frac{1+\lambda_1\alpha^2}{\nu^8\lambda_1^4}|g|^4\right]^{3/2},
\end{equation}
where $c>0$ is a dimensionless scale invariant constant.
\end{proposition}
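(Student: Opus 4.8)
The plan is to apply the Constantin--Foias volume--contraction method \cite{LADY} to the equivalent $\H$--formulation \eqref{eq:hat}, whose linearization $L(\hat u;t)$ is recorded in \eqref{eq:lin1}. Since the semigroup $\widehat S_\alpha(t)=G^{-1}S_\alpha(t)G$ inherits the uniform (quasi-)differentiability of $S_\alpha(t)$ on the attractor (see \cite{KT}*{Theorem 5.1}), the abstract theory guarantees that $\dim_\H\,\widehat\A_\alpha\leq N$ as soon as the $N$-th uniform trace number
$$
q_N=\limsup_{T\to\infty}\ \sup_{\hat u_0\in\widehat\A_\alpha}\ \frac1T\int_0^T\ \sup_{\{\phi_j\}}\ \sum_{j=1}^N\l L(\hat u(s);s)\phi_j,\phi_j\r\,\d s
$$
becomes negative, the inner supremum running over all orthonormal families $\{\phi_j\}_{j=1}^N$ in $\H$. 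Thus the whole matter reduces to bounding the integrand and then optimising in $N$.

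For the trace bound I would feed Proposition \ref{prop:lineariz} into the sum: for any orthonormal $\{\phi_j\}_{j=1}^N\subset\H$, using $|\phi_j|=1$,
$$
\sum_{j=1}^N\l L(\hat u;t)\phi_j,\phi_j\r\leq -N h_0+h_1(t)\sum_{j=1}^N|G^{-1}\phi_j|^2.
$$
The crucial point is that the destabilizing term carries the weight $|G^{-1}\phi_j|^2$, so that
$$
\sum_{j=1}^N|G^{-1}\phi_j|^2=\mathrm{Tr}\big(G^{-2}Q_N\big)\leq\sum_{j=1}^N\frac{1}{1+\alpha^2\lambda_j},
$$
where $Q_N$ is the orthogonal projection onto $\mathrm{span}\{\phi_1,\dots,\phi_N\}$ and the last inequality is the Ky Fan maximum principle for the positive operator $G^{-2}=(I+\alpha^2A)^{-1}$, whose eigenvalues are $(1+\alpha^2\lambda_j)^{-1}$. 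Invoking the three-dimensional Weyl asymptotics $\lambda_j\geq c\lambda_1 j^{2/3}$ for the Stokes operator, I would then estimate
$$
\sum_{j=1}^N\frac{1}{1+\alpha^2\lambda_j}\leq\frac{1}{\alpha^2}\sum_{j=1}^N\frac{1}{\lambda_j}\leq\frac{c}{\alpha^2\lambda_1}\,N^{1/3}.
$$

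It remains to time-average $h_1(t)=\alpha^{-2}\big(\nu+cM_1^2\nu^{-3}\|u(t)\|^2\big)$. Using the uniform bounds \eqref{eq:lin2}, that is $\sup_t\|u\|_{\V_\alpha}\leq M_1$ and $\limsup_{T\to\infty}\tfrac1T\int_0^T\|u\|^2\,\d t\leq K_1$, the limsup of the average of $h_1$ is controlled by $\alpha^{-2}\big(\nu+cM_1^2K_1\nu^{-3}\big)$. Collecting everything gives
$$
q_N\leq-\frac{\nu}{2\alpha^2}N+\frac{c}{\alpha^4\lambda_1}\Big(\nu+c\frac{M_1^2K_1}{\nu^3}\Big)N^{1/3},
$$
so that $q_N<0$ as soon as $N^{2/3}>\tfrac{c}{\alpha^2\lambda_1\nu}\big(\nu+cM_1^2K_1\nu^{-3}\big)$. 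Substituting the explicit values of $M_1$ and $K_1$ and taking $N$ to be the first integer beyond this threshold yields $N\sim\alpha^{-3}\big[1+\tfrac{1+\lambda_1\alpha^2}{\nu^8\lambda_1^4}|g|^4\big]^{3/2}$, which is precisely \eqref{eq:lin4}.

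I expect the main difficulty to be conceptual rather than computational: one must verify that the trace formula genuinely applies, i.e. that $\widehat S_\alpha(t)$ is uniformly differentiable on $\widehat\A_\alpha$ with differential $L(\hat u;t)$ and that the linearized flow supports the required volume-contraction estimates. This is exactly where the reformulation in $\H$ via the isometries $G^{\pm1}$ pays off, since \eqref{eq:hat} is a well-posed evolution in a fixed Hilbert space. The second, quantitative, delicate point is the spectral step: the improvement from the $\alpha^{-6}$ bound of \cite{KT} down to $\alpha^{-3}$ hinges precisely on the fact that in Proposition \ref{prop:lineariz} the bad term appears against $|G^{-1}w|^2$, so that only the tamer quantity $\mathrm{Tr}(G^{-2}Q_N)\sim\alpha^{-2}\lambda_1^{-1}N^{1/3}$ enters rather than the full $N$; securing the sharp power $N^{1/3}$ there is what the three-dimensional Weyl law is responsible for.
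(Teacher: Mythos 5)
Your proposal is correct and follows essentially the same route as the paper: the Constantin--Foias trace formula applied to the $\H$-reformulation \eqref{eq:hat}, the quadratic-form bound of Proposition \ref{prop:lineariz}, the estimate $\sum_{j=1}^N|G^{-1}\phi_j|^2\leq\sum_{j=1}^N(1+\alpha^2\lambda_j)^{-1}\leq cN^{1/3}/(\lambda_1\alpha^2)$ (the paper invokes Lemma 6.2 of \cite{T3} where you invoke Ky Fan, which is the same fact), the three-dimensional eigenvalue lower bound \eqref{eq:lowerstokes}, and the time-averaging via $M_1$ and $K_1$ before optimizing in $N$. The only cosmetic difference is that you discard the $1$ in $1+\alpha^2\lambda_j$ before summing, whereas the paper keeps it; both yield the same $N^{1/3}/(\lambda_1\alpha^2)$ and hence the same threshold $N\sim\alpha^{-3}[\,\cdot\,]^{3/2}$.
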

\begin{proof}
Consider an initial orthogonal set of infinitesimal displacements $w_{1,0},\ldots,w_{n,0}$, for some $n\geq 1$.
The volume of the parallelepiped they span is given by
$$
V_n(0)=|w_{1,0}\wedge\ldots \wedge w_{n,0}|.
$$
It follows that the evolution of such displacements obeys the evolution equation
$$
\dot{w}_i=L(\hat{u};t)w_i, \qquad w_i(0)=w_{i,0}, \qquad \forall i=1,\ldots,n.
$$
Then it follows cf. \cites{CONFO, COST} that the volume elements
$$
V_n(t)=|w_{1}(t)\wedge\ldots \wedge w_{n}(t)|
$$
satisfy
$$
V_n(t)=V_n(0)\exp\left[\int_0^t \mathrm{Tr}(P_n(s)L(\hat{u};s) )\d s\right],
$$
where the orthogonal projection $P_n(s)$ is onto the linear span of $\{w_{1}(s),\ldots, w_{n}(s)\}$ in $\H$,
and
$$
\mathrm{Tr}(P_n(s)L(\hat{u};s))=\sum_{j=1}^n \l L(\hat{u};s) \phi_j(s),\phi_j(s)\r,
$$
with $n\geq 1$ and $\{\phi_1(s),\ldots,\phi_n(s)\}$ an orthonormal set spanning $P_n(s)\H$.
Letting
$$
\l\l P_nL(\hat{u})\r\r:=\limsup_{T\to\infty} \frac1T\int_0^T \mathrm{Tr}(P_n(t)L(\hat{u};t))\d t,
$$
we obtain
$$
V_n(t)\leq V_n(0)\exp\left[ t\sup_{\hat{u}\in \widehat{\A}_\alpha}\sup_{P_n(0)}\l\l P_nL(\hat{u})\r\r\right]
$$
for all $t\geq 0$, where the supremum over $P_n(0)$ is a supremum over all choices of initial $n$ orthogonal set of
infinitesimal displacements that we take around $\hat{u}$.
We then need to show that $V_n(t)$ decays exponentially in time whenever $n\geq N$, with $N>0$ to be determined.
To achieve this, we will make use of the estimate
\begin{equation}\label{eq:lowerstokes}
\lambda_j\geq c\lambda_1 j^{2/3}, \qquad \forall j\geq 1, 
\end{equation}
on the eigenvalues of the Stokes operator in three dimensions, derived in \cites{I1,I2} (see also \cites{COST,I3,MET}
for an asymptotic estimate with various boundary conditions).

Thanks to Proposition \ref{prop:lineariz}, Lemma 6.2 (pp. 454) of \cite{T3} and \eqref{eq:lowerstokes} 
we have
\begin{align*}
\frac1T\int_0^T \mathrm{Tr}(P_n(t)L(\hat{u};t))\d t&= \frac1T\int_0^T \sum_{j=1}^n \l L(\hat{u};t) \phi_j(t),\phi_j(t)\r\d t\\
&\leq\frac1T\int_0^T \sum_{j=1}^n -h_0|\phi_j(t)|^2\d t+\frac1T  \int_0^Th_1(t)  \sum_{j=1}^n |G^{-1}\phi_j(t)|^2\d t\\
&\leq -h_0 n+ \frac1T  \int_0^T h_1(t)\d t  \sum_{j=1}^n \frac{1}{1+\lambda_1 \alpha^2 j^{2/3}}\\
&\leq -h_0 n + c \frac{n^{1/3}}{\lambda_1 \alpha^2}\frac1T  \int_0^T h_1(t)\d t
\end{align*}
In view of the bounds on the global attractor given by \eqref{eq:lin2} and Proposition  \ref{prop:lineariz},
it is now straightforward to check that
$$
\l\l P_nL(\hat{u})\r\r\leq   -\frac{\nu}{2\alpha^2}n + \frac{1}{\alpha^4}\left(\nu+c\frac{M_1^2K_1}{\lambda_1 \nu^3}\right)n^{1/3}.
$$
In order to have the above quantity negative, we need to require
$$
n\geq N:= \frac{c}{\alpha^3}\left[1+\frac{M_1^2K_1}{\lambda_1 \nu^4}\right]^{3/2}.
$$
Recalling the definitions of the constants $M_1$ and $K_1$, we get
$$
N= \frac{c}{\alpha^3}\left[1+\frac{1+\lambda_1\alpha^2}{\nu^8\lambda_1^4}|g|^4\right]^{3/2},
$$
concluding the proof of the theorem.
\end{proof}
In terms of the three-dimensional Grashof number
$$
\mathfrak{G}=\frac{|g|}{\nu^2 \lambda_1^{3/4}},
$$
the fractal dimension estimate can be written as
\begin{equation}\label{eq:grashof}
\dim_{\V_\alpha}\, \A_\alpha \leq \frac{c}{\alpha^3}\left[1+\frac{1+\lambda_1\alpha^2}{\lambda_1} \mathfrak{G}^4\right]^{3/2}.
\end{equation}

The optimality of the above estimate in terms of $\mathfrak{G}$ is an open question. Nonetheless,
\eqref{eq:grashof} gives some further insights on the Navier-Stokes equations (as the limit problem as $\alpha\to 0$), which
we discuss in the remark below.
\begin{remark}
In various works \cites{CONFO,CFMT,CFT,FMRT,GT}, several estimates on the dimension of global
attractor of the three-dimensional Navier-Stokes equations were derived, although it is still an outstanding
open problem to show that it exists. In particular, it has been shown that if $\A\subset  \H$ is invariant and bounded
in $\V$, then
$$
\dim_{\H}\, \A \leq c \mathfrak{G}^{3/2}.
$$
The above bound has been derived heuristically via dimensional analysis, and it is not known that such an attractor $\A$
actually even exists for the Navier-Stokes equations. In particular, the only available result is the existence of a
\emph{weak} attractor (see next section), which is only known to be bounded in $\H$.
On the other hand, in \cite{LIU}, a lower bound of the form
$$
\dim_{\H}\, \A \geq c \mathfrak{G}
$$
was obtained.
\end{remark}

\begin{remark}\label{important}
An upper bound for the fractal dimension of the global attractor $\A_\alpha$, of the order $c\alpha^{-6}$ as $\alpha\to 0$, was obtained in \cite{KT} employing a result that can be found in the book of Ladyzhenskaya, see Theorem 4.9 of \cite{LADY}. Unfortunately, the claim of this result remains generally untrue due to a faulty assumption (see, the second condition of (4.36) in \cite{LADY} on pg. 32) which is generally false for a given positive self-adjoint operator. In particular, such an assumption does not hold for the Stokes operator $A$. This is the reason why we have chosen to work directly with the Constantin-Foias trace formula.
\end{remark}

We conclude this section with the following result on exponential attractors associated with the Voigt model.
\begin{theorem}\label{thm:exx}
The semigroup $S_\alpha(t)$ admits an exponential attractor $\E_\alpha$ whose fractal dimension obeys the estimate:
$$
\dim_{\V_\alpha}\,\E_\alpha\leq 1+\dim_{\V_\alpha}\, \A_\alpha.
$$
\end{theorem}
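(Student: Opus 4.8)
The plan is to deduce Theorem \ref{thm:exx} as a consequence of the two main constructions already established: the existence of the exponential attractor $\E_\alpha$ (Theorem \ref{thm:expattr}) together with the explicit fractal dimension bound for the global attractor $\A_\alpha$ from Theorem \ref{thm:fractunif}. The key point is that the abstract machinery used to build $\E_\alpha$ (the result quoted as Theorem \ref{lem:suffexp} in the appendix, via the Eden--Foias--Kalantarov scheme) produces an exponential attractor whose fractal dimension is controlled by the fractal dimension of the global attractor, \emph{provided} one uses the construction based on differentiability of the semigroup rather than the crude covering of the unit ball of $\V_\alpha^2$. As already flagged in the discussion at the start of Section \ref{sec:fractal}, the naive covering estimate yields an exponential-like bound of order $\e^{1/\alpha^2}\alpha^{-21/2}$, whereas the Lyapunov-exponent construction of \cites{Eden1, Eden2} gives an exponential attractor with \emph{the same} dimension estimate as the global attractor.

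First I would recall that $S_\alpha(t)$ is uniformly differentiable on $\A_\alpha$ (equivalently, on $\widehat\A_\alpha$ for $\widehat S_\alpha$), with the linearization given precisely by $L(\hat u;t)$ in \eqref{eq:lin1}; this differentiability is available from \cite{KT}*{Theorem 5.1} and is exactly what the covering approach ignored. Working in the $\H$-formulation \eqref{eq:hat}, the trace computation carried out in the proof of Proposition \ref{prop:fractunif} shows that the sum of the first $n$ global Lyapunov exponents, $\l\l P_n L(\hat u)\r\r$, becomes negative as soon as $n\geq N$, where
$$
N=\frac{c}{\alpha^3}\left[1+\frac{1+\lambda_1\alpha^2}{\nu^8\lambda_1^4}|g|^4\right]^{3/2}
$$
coincides with the bound on $\dim_\H\widehat\A_\alpha=\dim_{\V_\alpha}\A_\alpha$. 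The Eden--Foias--Kalantarov construction then produces an exponential attractor $\E_\alpha$ whose fractal dimension is bounded by $1$ plus the smallest integer $N$ making the trace negative; since $G$ and $G^{-1}$ are isometries between $\H$ and $\V_\alpha$, fractal dimensions are preserved, and one transfers the estimate back to $\V_\alpha$ exactly as in the paragraph following Theorem \ref{thm:fractunif}.

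The remaining step is to assemble these facts: invoke the differentiable version of the abstract exponential-attractor theorem (the squeezing/smoothing property supplied by the decomposition $S_\alpha=V_\alpha+W_\alpha$ of Section \ref{sub:dec}, combined with the uniform differentiability), which guarantees an $\E_\alpha$ with
$$
\dim_{\V_\alpha}\,\E_\alpha\leq 1+\dim_{\V_\alpha}\,\A_\alpha.
$$
The compactness, positive invariance, and exponential attraction of $\E_\alpha$ were already secured in Theorem \ref{thm:expattr}, so the only genuinely new content is the dimension bound, which now follows because the same integer $N$ governing the Lyapunov-exponent cutoff bounds the dimension of \emph{both} attractors. \textbf{The main obstacle} I anticipate is ensuring that the differentiable exponential-attractor scheme applies in the present $\alpha$-dependent functional setting: one must verify the uniform differentiability of $S_\alpha(t)$ on the invariant compact set $\K_\alpha$ (or on $\A_\alpha$) with a linearization satisfying the hypotheses of the trace formula, and check that the remainder in the first-order expansion is controlled uniformly in time along the attractor. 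Once this differentiability is in place --- which is precisely where \cite{KT}*{Theorem 5.1} is invoked --- the dimension count is inherited verbatim from Proposition \ref{prop:fractunif}, and the ``$1+$'' accounts for the extra dimension introduced by the time variable in the standard exponential-attractor construction.
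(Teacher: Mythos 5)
Your proposal heads in a genuinely different direction from the paper, and as written it does not quite prove the stated inequality. The paper's proof is three lines long: Lemmas \ref{lem:decay1} and \ref{lem:cpt1} show that $S_\alpha(t)$ is a contraction in the sense of \cite{Eden2}*{Definition 2.1} (a strict contraction $V_\alpha$ plus a map $W_\alpha$ that is smoothing into $\V^2_\alpha$, hence compact), Lemma \ref{lem:lipschitz} supplies the Lipschitz continuity, and then \cite{Eden2}*{Theorem 2.2} yields, for any Lipschitz $\alpha$-contraction, an exponential attractor satisfying $\dim_{\V_\alpha}\E_\alpha\leq 1+\dim_{\V_\alpha}\A_\alpha$ \emph{directly in terms of the actual fractal dimension of the global attractor}. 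No differentiability of the semigroup is needed for this step; all the required ingredients were already established in Section 4, so no new estimates are necessary.

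Your route, via uniform differentiability and the trace formula, is the construction of \cite{Eden1} rather than \cite{Eden2}, and it delivers a weaker conclusion: it bounds $\dim_{\V_\alpha}\E_\alpha$ by $1+N$, where $N\sim c\alpha^{-3}\bigl[1+\tfrac{1+\lambda_1\alpha^2}{\nu^8\lambda_1^4}|g|^4\bigr]^{3/2}$ is the Lyapunov cutoff appearing in the proof of Proposition \ref{prop:fractunif}. Since $N$ is only an \emph{upper bound} for $\dim_{\V_\alpha}\A_\alpha$, the inequality $\dim_{\V_\alpha}\E_\alpha\leq 1+N$ does not imply $\dim_{\V_\alpha}\E_\alpha\leq 1+\dim_{\V_\alpha}\A_\alpha$, which is what the theorem asserts. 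Your own closing sentence reveals the issue: arguing that ``the same integer $N$ governing the Lyapunov-exponent cutoff bounds the dimension of both attractors'' establishes only that both dimensions are at most of order $N$, not that one exceeds the other by at most $1$. Moreover, the differentiability verification that you single out as the main obstacle is precisely the hypothesis the paper's argument avoids. If you want the clean additive bound of the theorem, the $\alpha$-contraction route of \cite{Eden2} is the one to take; the Lyapunov-exponent route is the right tool for the \emph{explicit} estimate of Theorem \ref{thm:fractunif}, which is a separate statement.
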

\begin{proof}
By virtue of Lemmas \ref{lem:decay1} and \ref{lem:cpt1}, we immediately see that the
semigroup $S_\alpha(t)$ is a contraction in the sense of \cite{Eden2}*{Definition 2.1}.
Furthermore, by Lemma \ref{lem:lipschitz} it is also Lipschitz. The claim follows then from
the application of \cite{Eden2}*{Theorem 2.2}.
\end{proof}

\section{On the Navier-Stokes limit of the Voigt model}
\noindent In this section, we establish various results on the convergence of the (strong) global attractors for the Voigt
model as $\alpha$ goes to zero. In contrast to the trajectory dynamical approach employed in \cites{CTV,CTV2} for
the 3D Navier-Stokes $\alpha$-model and the 3D Leray $\alpha$-model, here we only use the (simple) concept of
multivalued semiflows and weak topology of $L^2$, by choosing to work directly in the physical phase space $\H$.

\subsection{Leray-Hopf weak solutions}\label{sub:LH}
A Leray-Hopf weak solution to \eqref{eq:NSABS} is a function $u:[0,\infty)\to \H$ such that
\begin{enumerate}[label=(\roman*)]
	\item $u\in L^\infty_{loc}(0,\infty;\H)\cap L^2_{loc}(0,\infty;\V)$;
	\item $\dot{u}\in L^{4/3}_{loc}(0,\infty;\V^*)$;
	\item $u\in C([0,\infty);\H_w)$, namely, for every $v\in \H$, the function
	$t\mapsto \l u(t),v\r$ is continuous from $[0,\infty)$ into $\R$;
	\item for every $s,t\in [0,\infty)$ with $s\leq t$, there holds
	$$
	\l u(t),v\r + \nu\int_s^t\l Au(\tau),v \r\d\tau+\int_s^t\l B(u(\tau),u(\tau)),v \r\d\tau=\l u(s),v\r+
	 \int_s^t\l g,v \r\d\tau, \qquad \forall v\in\V;
	$$
	\begin{equation}\label{eq:enerineq}
	\frac12\frac{\d}{\d t}|u(t)|^2+\nu\|u(t)\|^2\leq \l g,u(t)\r, \qquad t\in[0,T],
	\end{equation}
	in the distribution sense.
\end{enumerate}
The classical results of Leray \cite{L} and Hopf \cite{HO} establish in particular that
for any given $u_0\in \H$, there exists at least a solution to \eqref{eq:NSABS}
in the above sense. However, the uniqueness of such solutions is still unknown.

\begin{remark}
Inequality \eqref{eq:enerineq} should be understood in the following sense: for each
positive function $\psi\in C_0^\infty((0,T))$,
\begin{equation}\label{eq:enerineq1}
-\frac12\int_0^T|u(t)|^2\psi^\prime(t)\d t+\nu\int_0^T\|u(t)\|^2\psi(t)\d t\leq \int_0^T\l g,u(t)\r\psi(t)\d t.
\end{equation}
It is worth observing here that any solution constructed by means of a Faedo-Galerkin
approximation procedure satisfies \eqref{eq:enerineq}, but this is not known for
weak solutions in general.
\end{remark}

\begin{remark}
Inequality \eqref{eq:enerineq} is sometimes rephrased as follows:  for almost every $s\in [0,\infty)$, $u$ satisfies the energy inequality
	\begin{equation}\label{eq:enerineq2}
	|u(t)|^2+2\nu\int_s^t\| u(\tau)\|^2\d \tau\leq |u(s)|^2+2\int_s^t \l g,u(\tau)\r \d\tau,
	\end{equation}
for all $t\geq s$.
The set (of full measure) of times $s$ for which the energy inequality \eqref{eq:enerineq2} holds
coincides with the points of strong continuity from the right of $u$ in $\H$.
It is important to notice that we do not require  the energy inequality \eqref{eq:enerineq2} to hold
at $s=0$. In this way, we are guaranteed that translations of solutions are still solutions, while
the concatenation property is unknown. Our approach slightly differs
from \cites{FRT,FRT2,RO,SE}, while, in this sense at least, it is more closely related to that in \cite{CF}.
\end{remark}

For every $t\geq 0$, we  define a multivalued function $S(t):\H\to 2^\H$ by
$$
S(t)u_0=\big\{u(t):  u \text{ is a solution to \eqref{eq:NSABS} with } u(0)=u_0\big\}.
$$
It is clear that $S(0)$ is the identity on $\H$, while the fact that translations of solutions are still solutions
implies the inclusion
\begin{equation}\label{eq:semtra}
S(t+\tau)u_0\subset S(t)S(\tau)u_0, \qquad \forall t,\tau\geq 0, \quad \forall u_0\in\H.
\end{equation}
In the literature, the above objects are referred to as \emph{multivalued semiflows}, and were introduced
for the first time in \cite{MV}. Different approaches to the study of the asymptotic behavior of systems
without uniqueness properties of solutions were developed in  \cites{BA, CV, CHES}.
It is well known (see \cite{COST}) that the set
$$
\B_0=\left\{w\in \H: |w|^2\leq\frac{2(1+\lambda_1)|g|^2}{\nu^2\lambda_1^2}\right\}
$$
is a bounded absorbing set for $S(t)$.

%

\begin{remark}
In fact, any radius bigger than $|g|/\nu\lambda_1$ will produce
an absorbing ball. The reason why we chose this particular one is that we will be able
to compare it with the absorbing set of the Voigt model, defined in \eqref{eq:absball}. Notice that
$\B_\alpha\subset \B_0$ for every $\alpha\in(0,1]$.
\end{remark}

In what follows, we will study the asymptotic behavior of the Navier-Stokes equations
with respect to two different metrics.

\begin{itemize}
\item Since $\H$ is a separable Hilbert space, the weak
topology on bounded sets is metrizable, by a metric that we will denote by $\d_w$.
The classical approach first devised in \cite{FT} consists, roughly speaking, in studying
the asymptotic behavior of the absorbing ball $\B_0$ with respect to the weak metric $\d_w$,
in terms of the so-called weak attractor.

\item The harder (and, for the moment, out of reach) case is the one involving the strong
topology of $\H$. It is perhaps one of the most challenging problems in the theory of infinite-dimensional
dynamical system to prove that the above-mentioned weak attractor is strong.

\end{itemize}

\subsection{Weak attractors}
Although formulated in different ways, one of the main results of \cite{CF} and \cite{RO} can be stated as follows.

\begin{theorem}\label{thm:weakstrong}
There exists a weakly compact set $\A_w\subset \B_0$ such that
$$
\lim_{t\to\infty}\dist_w(S(t)\B_0,\A_w)=0,
$$
where $\dist_w$ stands for the Hausdorff semidistance between sets given by the
weak metric $\d_w$. Moreover, $\A_w$ is unique, it is the weak $\omega$-limit set of $\B_0$ and
$$
\A_w=\left\{u(0): u \text{ is a solution on } (-\infty,\infty) \text{ with } u(t)\in\B_0,\ \forall t\in\R\right\}.
$$
Also,  the following statements are equivalent:
\begin{enumerate}[label=(\roman*)]
	\item $\A_w$ is strongly compact in $\H$;
	\item $\A_w$ is the strong attractor of $S(t)$;
	\item  all solutions on the weak global attractor are strongly continuous in $\H$.
\end{enumerate}
\end{theorem}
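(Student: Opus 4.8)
The plan is to treat the multivalued map $S(t)$ as a semiflow acting on the weakly compact absorbing ball $\B_0$, to construct $\A_w$ as its weak $\omega$-limit set, and then to decide when weak attraction can be upgraded to strong attraction by means of an energy argument. The single analytic ingredient on which everything rests is a weak closed-graph property of the solution map: if $u_{0,n}\rightharpoonup u_0$ in $\H$ and $v_n\in S(t)u_{0,n}$ with $v_n\rightharpoonup v$, then $v\in S(t)u_0$. I would prove this by the classical compactness scheme for Leray-Hopf solutions. The energy inequality \eqref{eq:enerineq2} furnishes uniform bounds in $L^\infty_{loc}(0,\infty;\H)\cap L^2_{loc}(0,\infty;\V)$ and, through the equation, a bound on $\dot u$ in $L^{4/3}_{loc}(0,\infty;\V^*)$; the Aubin-Lions lemma then yields strong $L^2_{loc}(0,\infty;\H)$ convergence of a subsequence, which suffices to pass to the limit in the weak formulation (iv) and, using weak lower semicontinuity, in the energy inequality. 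Granting this, $\A_w=\bigcap_{s\geq0}\overline{\bigcup_{t\geq s}S(t)\B_0}^{\,w}$ is non-empty and weakly compact, being a nested intersection of weakly closed subsets of the weakly compact ball $\B_0$; it weakly attracts $\B_0$; and it is minimal among weakly closed weakly attracting sets, whence unique.

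Next I would establish the representation of $\A_w$ through complete bounded trajectories. The inclusion $\supset$ is immediate, since a solution defined on all of $\R$ with values in $\B_0$ cannot escape the attracting set, so $u(0)\in\A_w$. For the inclusion $\subset$, given $u_0\in\A_w$ I would use the invariance inherited from the closed-graph property to select, at each negative time, a preimage lying in $\B_0$, and then assemble a complete trajectory through $u_0$ by a diagonal weak-compactness argument; the closed-graph property again guarantees that the pieced-together curve is a genuine Leray-Hopf solution on $\R$.

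The heart of the statement is the equivalence of (i)--(iii), which hinges on the elementary Hilbert-space fact that a weakly continuous curve is strongly continuous precisely when its norm is continuous, together with the observation that for a Leray-Hopf solution the continuity of $t\mapsto|u(t)|$ is exactly the energy \emph{equality}. The implication (ii)$\Rightarrow$(i) is trivial. For (i)$\Rightarrow$(iii), if $\A_w$ is strongly compact then any (weakly continuous) trajectory valued in $\A_w$ is strongly continuous: for $t_n\to t$ one has $u(t_n)\rightharpoonup u(t)$, and every subsequence of $\{u(t_n)\}$ admits a strongly convergent sub-subsequence whose limit, by uniqueness of the weak limit, must be $u(t)$. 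The decisive step is (iii)$\Rightarrow$(ii), for which I would run Ball's energy method. Given $t_n\to\infty$ and $u_{0,n}\in\B_0$, I extract $u_n(t_n)\rightharpoonup\xi\in\A_w$ and, shifting time, a limiting complete bounded trajectory $y$ through $\xi$; by (iii) this limit satisfies the energy equality, and comparing it with the energy inequality for the $u_n$ over a window $[t_n-T,t_n]$ gives $\limsup_n|u_n(t_n)|\leq|\xi|$, which together with weak lower semicontinuity forces $|u_n(t_n)|\to|\xi|$ and hence strong convergence. This is precisely strong asymptotic compactness, so the weak attraction improves to strong attraction and $\A_w$ is the strong attractor.

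The main obstacle is exactly this energy comparison in (iii)$\Rightarrow$(ii). The approximating solutions satisfy only the energy \emph{inequality}, whereas the limiting trajectory must be fed the energy \emph{equality} coming from (iii); moreover \eqref{eq:enerineq2} is not assumed to hold at the initial instant, so the balance must be tracked over a backward window $[t_n-T,t_n]$ whose lower endpoint is chosen to be a time of strong right-continuity, and where the interior dissipation term $\int\|u_n\|^2$ is controlled from below by weak lower semicontinuity in $L^2(\V)$ while the boundary and forcing terms pass to the limit by the strong $L^2_{loc}(\H)$ convergence. Making this bookkeeping rigorous, in a way compatible with the multivalued, non-unique framework (where only the inclusion \eqref{eq:semtra} is available), is the genuinely delicate part of the argument.
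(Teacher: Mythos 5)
The paper does not prove Theorem \ref{thm:weakstrong} at all: it is stated as a quotation of known results from \cite{CF} and \cite{RO} (with the weak attractor itself going back to \cite{FT}), so there is no in-paper argument to compare against. Your sketch is a faithful reconstruction of the standard proof from those references: the weak closed-graph property via Aubin--Lions, the weak $\omega$-limit construction $\A_w=\bigcap_{s\geq0}\overline{\bigcup_{t\geq s}S(t)\B_0}^{\,w}$ inside the weakly compact (because closed, convex, bounded) ball $\B_0$, the representation by complete bounded trajectories via a diagonal argument on time-shifted solutions, and Ball's energy method for (iii)$\Rightarrow$(ii). You also correctly isolate the genuinely delicate points: that only the inclusion \eqref{eq:semtra} is available in the multivalued setting, that the energy inequality \eqref{eq:enerineq2} is only known at a.e.\ initial time so the backward window $[t_n-T,t_n]$ must start at a good time, and that strong continuity of a weakly continuous $\H$-valued curve is equivalent to continuity of its norm. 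Two small points worth tightening if you write this out in full: in the representation formula the inclusion $\subset$ is not obtained by ``selecting preimages'' (backward solvability within $\A_w$ is not given a priori in the multivalued framework) but precisely by the diagonal limit of shifted trajectories that you mention second; and in (i)$\Rightarrow$(iii) you should first record that every complete bounded trajectory takes values in $\A_w$ for all times (a consequence of the translation invariance of the representation formula) before invoking strong compactness. Neither is a gap in the idea, only in the bookkeeping.
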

The existence of the weak attractor of the 3D NSE dates back to the seminal work of Foias and Temam \cite{FT},
while one of the novelties of  \cite{CF} and \cite{RO} is the necessary and sufficient condition of
strong continuity ensuring the existence of a strong attractor. In this spirit, we shall investigate
what kind of information can be deduced about the weak attractor of the NSE from the Voigt regularization.

%
%

\subsection{Convergence as $\alpha\to 0$}
At the level of weak topologies, it can be rigorously shown that the global attractors $\{\A_\alpha\}_{\alpha\in(0,1]}$
are upper semicontinuous with respect to $\alpha$.
The main result of this section reads as follows.
\begin{theorem}\label{thm:mainweak}
The family of attractors $\{\A_\alpha\}_{\alpha\in(0,1]}$ of the Voigt equations converges, as $\alpha\to 0$, to the weak
attractor $\A_w$ of the Navier-Stokes equations, namely
\begin{equation}\label{eq:convVstar}
\lim_{\alpha\to 0}\dist_{w}(\A_\alpha,\A_w)=0.
\end{equation}
\end{theorem}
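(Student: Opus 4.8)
The plan is to establish the upper semicontinuity convergence \eqref{eq:convVstar} by a standard compactness-and-characterization argument adapted to the multivalued (weak) setting. I would argue by contradiction: suppose \eqref{eq:convVstar} fails. Then there exist $\eps_0>0$, a sequence $\alpha_n\to 0$, and points $a_n\in\A_{\alpha_n}$ such that $\dist_w(a_n,\A_w)\geq\eps_0$ for all $n$. The goal is to produce a limit point of the sequence $\{a_n\}$ that lies in $\A_w$, contradicting this separation. Because $\A_{\alpha_n}\subset\B_{\alpha_n}\subset\B_0$ for every $n$ (as noted in the excerpt), the sequence $\{a_n\}$ is uniformly bounded in $\H$, hence weakly precompact; passing to a subsequence, $a_n\rightharpoonup a$ weakly in $\H$ for some $a\in\H$. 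Since the weak topology on the bounded set $\B_0$ is metrized by $\d_w$, this means $\d_w(a_n,a)\to 0$, and it suffices to show $a\in\A_w$.

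The heart of the proof is showing $a\in\A_w$, for which I would use the characterization of $\A_w$ in Theorem \ref{thm:weakstrong} as the set of time-zero values of \emph{complete bounded} trajectories of the Navier-Stokes system lying in $\B_0$. Since each $\A_{\alpha_n}$ is fully invariant under $S_{\alpha_n}(t)$, the point $a_n$ lies on a complete trajectory $u_n:\R\to\A_{\alpha_n}$ of the Voigt equation \eqref{eq:NSVABS} with $u_n(0)=a_n$ and $u_n(t)\in\B_0$ for all $t\in\R$. The plan is then to extract a limit of these complete Voigt trajectories and show it is a Leray--Hopf weak solution of the Navier--Stokes equation on the whole line, staying in $\B_0$. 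For this I would derive uniform-in-$\alpha$ bounds: the energy estimate \eqref{eq:absorb} gives a uniform $L^\infty(\R;\H)$ bound, and integrating \eqref{eq:firstdiff} over finite windows gives a uniform $L^2_{loc}(\R;\V)$ bound on $u_n$; from the equation one extracts a uniform bound on $\dot u_n + \alpha_n^2 A\dot u_n$ in a negative-order space. The crucial observation is that the regularizing term $\alpha_n^2 A\dot u_n$ vanishes in the limit: testing against a fixed $v\in\V$ and using $\alpha_n^2|\l A\dot u_n,v\r| = \alpha_n^2|\l \dot u_n, Av\r|$ together with the uniform bounds shows this contribution tends to zero.

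By the Banach--Alaoglu and Aubin--Lions compactness theorems, along a further subsequence $u_n\to u$ weakly-$\ast$ in $L^\infty_{loc}(\R;\H)$, weakly in $L^2_{loc}(\R;\V)$, and strongly in $L^2_{loc}(\R;\H)$ on every compact time window. The strong $L^2_{loc}(\R;\H)$ convergence is what allows passage to the limit in the quadratic nonlinearity $\l B(u_n,u_n),v\r$ for fixed $v\in\V$, using the continuity of the trilinear form $b$. Thus $u$ satisfies the weak formulation (iv) of a Leray--Hopf solution on $(-\infty,\infty)$, and the uniform energy control passes to the limit (via lower semicontinuity of the norm under weak convergence) to yield the energy inequality and the bound $u(t)\in\B_0$ for all $t\in\R$. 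Consequently $u$ is a complete bounded Navier--Stokes trajectory in $\B_0$, so $u(0)\in\A_w$ by the characterization in Theorem \ref{thm:weakstrong}. Finally, identifying $u(0)=a$ (which follows since $u_n(0)=a_n\rightharpoonup a$ and the convergence is compatible with evaluation, using the equicontinuity in $\H_w$ from the uniform $\dot u_n$ bounds) gives $a\in\A_w$, the desired contradiction.

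The main obstacle, and the step requiring the most care, is the passage to the limit in the nonlinear term and the correct handling of the time-derivative compactness in the \emph{multivalued}, non-unique setting: one must ensure that the limit $u$ genuinely qualifies as a Leray--Hopf weak solution in the precise sense of Subsection \ref{sub:LH}, including the continuity property $u\in C(\R;\H_w)$ and the energy inequality at almost every $s$, rather than merely a distributional solution. Securing strong $L^2_{loc}(\R;\H)$ convergence via a uniform-in-$\alpha$ estimate on $\dot u_n$ in $L^{4/3}_{loc}(\R;\V^*)$---carefully accounting for the Voigt term $\alpha_n^2 A\dot u_n$ and showing it does not spoil the compactness despite being a priori of higher order---is the technically delicate point on which the entire argument hinges.
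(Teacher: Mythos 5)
Your proof is correct, but it follows a genuinely different route from the paper's. You argue by contradiction and use the characterization of $\A_w$ in Theorem \ref{thm:weakstrong} as the set of time-zero values of complete bounded trajectories in $\B_0$: the full invariance of $\A_{\alpha_n}$ yields complete Voigt trajectories through the points $a_n$, and your compactness argument produces a limiting complete Leray--Hopf trajectory lying in $\B_0$, whence $a=u(0)\in\A_w$. The paper instead splits $\dist_w(\A_\alpha,\A_w)\le \dist_w(\A_\alpha,S(t)\B_0)+\dist_w(S(t)\B_0,\A_w)$, fixes a time $t_\eps$ at which the second term is small using the attraction of $\B_0$ by $\A_w$, and controls the first term by a forward-in-time convergence result for solutions emanating from the absorbing balls (Lemma \ref{lem:conv} and Corollary \ref{cor:conv}); notably, the paper's own Lemma \ref{lem:strongconv}, used for the strong-topology statement, is precisely the complete-trajectory extraction you propose. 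Your route avoids invoking the attraction property of $\A_w$ but requires the two-sided version of the compactness lemma, while the paper's route needs only convergence on $[0,\infty)$ from the absorbing set; both hinge on the same technical core (uniform bounds, Aubin--Lions, passage to the limit in $B(u_n,u_n)$ and in the energy inequality). One small repair: for $v\in\V$ the pairing $\l \dot u_n, Av\r$ is not defined, so to dispose of the term $\alpha_n^2 A\dot u_n$ you should either take $v$ in a dense smoother class or, as the paper does, integrate in time first so that the term reduces to $\alpha_n^2\l A^{1/2}u_n(\cdot),A^{1/2}v\r$ evaluated at the endpoints, which vanishes because $\alpha_n^2\|u_n(t)\|\le c\,\alpha_n\to 0$ uniformly.
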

Theorem \ref{thm:mainweak} rephrases in a very simple way some of the results
obtained in \cites{CTV,CTV2} for different models. In these papers, the
authors make use of the theory of trajectory attractors in order to deal with the
possible non-uniqueness of Leray-Hopf weak solutions, obtaining convergence
results analogous to \eqref{eq:convVstar} in the so-called trajectory spaces and their
(non-metrizable) topologies.

We start by establishing some standard estimates for the Voigt problem which are
uniform with respect to $\alpha$.

\begin{lemma}\label{lem:estind}
Let $u_0\in \B_\alpha$ and set $u^\alpha(t)=S_\alpha(t)u_0$. Then
\begin{equation}\label{eq:estind1}
\sup_{t\geq 0} \left[|u^\alpha(t)|^2+\alpha^2\|u^\alpha(t)\|^2+\nu\int_{t}^{t+1}\|u^\alpha(\tau)\|^2\d \tau+\int_{t}^{t+1}\|\dot{u}^\alpha(\tau)\|_*^{4/3}\d \tau\right]\leq c,
\end{equation}
where $c$ is independent of $\alpha$.
\end{lemma}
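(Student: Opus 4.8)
The plan is to treat the four contributions in \eqref{eq:estind1} separately, noting that the first three are essentially immediate consequences of the dissipative estimates of Section 3, while the real work lies in the uniform control of $\dot u^\alpha$ in $\V^*$. Throughout, the point that makes every constant \emph{independent} of $\alpha$ is simply that $\alpha\in(0,1]$, so that $1+\lambda_1\alpha^2\leq 1+\lambda_1$.

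First I would dispatch the pointwise and integral-in-time energy terms. Since $u_0\in\B_\alpha$, the dissipative estimate \eqref{eq:absorb} yields $E_\alpha(t)=|u^\alpha(t)|^2+\alpha^2\|u^\alpha(t)\|^2\leq M_1^2=3(1+\lambda_1\alpha^2)|g|^2/(\nu^2\lambda_1^2)$ for all $t\geq 0$, which is bounded by $3(1+\lambda_1)|g|^2/(\nu^2\lambda_1^2)$ uniformly in $\alpha$; this covers the first two terms at once. For the third term I would integrate the differential inequality \eqref{eq:firstdiff} over $[t,t+1]$, obtaining
$$
\nu\int_t^{t+1}\|u^\alpha(\tau)\|^2\d\tau\leq E_\alpha(t)-E_\alpha(t+1)+\frac{|g|^2}{\nu\lambda_1}\leq M_1^2+\frac{|g|^2}{\nu\lambda_1},
$$
again uniform in $\alpha$ by the previous step.

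The fourth term is the heart of the matter, and the main obstacle is the regularizing term $\alpha^2 A\dot u^\alpha$: pairing the equation with $\phi\in\V$ produces a contribution $\alpha^2\langle A^{1/2}\dot u^\alpha,A^{1/2}\phi\rangle$ that cannot be controlled uniformly through the (non-uniform) bound of Lemma \ref{lem:derivholder}. The resolution is to use the self-adjointness of $G^2=I+\alpha^2 A$. Writing the equation as $G^2\dot u^\alpha=g-\nu Au^\alpha-B(u^\alpha,u^\alpha)=:f^\alpha$ and setting $\psi=G^{-2}\phi$ for $\phi\in\V$, one checks that $\langle\dot u^\alpha,\phi\rangle=\langle G^2\dot u^\alpha,\psi\rangle=\langle f^\alpha,\psi\rangle$. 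Since $G^{-2}$ acts as a contraction on $\V$, i.e. $\|\psi\|=\|G^{-2}\phi\|\leq\|\phi\|$, taking the supremum over $\|\phi\|\leq 1$ gives the $\alpha$-free bound
$$
\|\dot u^\alpha\|_*\leq\|f^\alpha\|_*\leq\frac{|g|}{\sqrt{\lambda_1}}+\nu\|u^\alpha\|+c\,|u^\alpha|^{1/2}\|u^\alpha\|^{3/2},
$$
where the nonlinear term is estimated via the antisymmetry \eqref{eq:b1} and \eqref{eq:b2}, namely $\|B(u^\alpha,u^\alpha)\|_*\leq c\,|u^\alpha|^{1/2}\|u^\alpha\|^{3/2}$.

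Finally I would raise this estimate to the power $4/3$ and integrate. The exponent $4/3$ is exactly what renders the nonlinear contribution integrable with the bounds at hand: from $\|\dot u^\alpha\|_*^{4/3}\leq c\big(|g|^{4/3}+\nu^{4/3}\|u^\alpha\|^{4/3}+|u^\alpha|^{2/3}\|u^\alpha\|^2\big)$, H\"older over an interval of unit length gives $\int_t^{t+1}\|u^\alpha\|^{4/3}\d\tau\leq\big(\int_t^{t+1}\|u^\alpha\|^2\d\tau\big)^{2/3}$, while $\int_t^{t+1}|u^\alpha|^{2/3}\|u^\alpha\|^2\d\tau\leq\big(\sup_\tau|u^\alpha|^{2/3}\big)\int_t^{t+1}\|u^\alpha\|^2\d\tau$. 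All factors on the right are controlled uniformly in $\alpha$ by the first three terms already estimated, which closes the proof. The only delicate step is the uniform dual bound on $\dot u^\alpha$; everything else is routine given the Section 3 estimates and $\alpha\leq 1$.
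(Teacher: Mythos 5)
Your proposal is correct and follows essentially the same route as the paper: the first three terms come from the dissipative estimates \eqref{eq:absorb} and \eqref{eq:firstdiff}, and the time-derivative bound is obtained by inverting $I+\alpha^2A$ (the paper writes $\dot u^\alpha=(I+\alpha^2A)^{-1}\bigl[g-\nu Au^\alpha-B(u^\alpha,u^\alpha)\bigr]$ and uses the uniform boundedness of $(I+\alpha^2A)^{-1}$ in $\V^*$, which is exactly your duality argument with $G^{-2}$), followed by the same estimate $\|B(u^\alpha,u^\alpha)\|_*\leq c\,|u^\alpha|^{1/2}\|u^\alpha\|^{3/2}$ and the same H\"older step at exponent $4/3$.
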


\begin{proof}
The first part of the above estimate is a straightforward consequence of \eqref{eq:estind}. Now, integrating \eqref{eq:firstdiff}
on $(t,t+1)$, we find
$$
|u^\alpha(t+1)|^2+\alpha^2\|u^\alpha(t+1)\|^2 +\nu\int_t^{t+1}\|u^\alpha(\tau)\|^2\d\tau\leq \frac{|g|^2}{\nu\lambda_1}+|u^\alpha(t)|^2+\alpha^2\|u^\alpha(t)\|^2,
$$
so that, a further use of \eqref{eq:estind} entails
$$
\nu\int_t^{t+1}\|u^\alpha(\tau)\|^2\d\tau\leq c.
$$
For the estimate on $\dot{u}^\alpha$, we rewrite the Voigt model as
$$
\dot{u}^\alpha +\nu (I+\alpha^2A)^{-1} Au^\alpha+(I+\alpha^2A)^{-1}B(u^\alpha,u^\alpha)=(I+\alpha^2A)^{-1}g.
$$
Therefore,
$$
\|\dot{u}^\alpha\|_*\leq \nu \|(I+\alpha^2A)^{-1} Au^\alpha\|_*+\|(I+\alpha^2A)^{-1}B(u^\alpha,u^\alpha)\|_*+\|(I+\alpha^2A)^{-1}g\|_*.
$$
As a consequence,
$$
\|\dot{u}^\alpha\|_*\leq c \left[\nu \|u^\alpha\|+\|B(u^\alpha,u^\alpha)\|_*+\|g\|_*\right].
$$
By standard estimates, it is not hard to see that
$$
\|B(u^\alpha,u^\alpha)\|_*\leq c \|u^\alpha\|^{3/2}|u^\alpha|^{1/2}.
$$
Hence, in light the above estimates, we learn that
$$
\int_{t}^{t+1}\|\dot{u}^\alpha(\tau)\|^{4/3}_*\d\tau\leq c \left[\int_{t}^{t+1}\|u^\alpha(\tau)\|^2\d\tau+\|g\|^{4/3}_*\right]\leq c,
$$
and the proof is over.
\end{proof}

With the above lemma at our disposal, we then can prove the following convergence result, which is crucial for
our purposes.

\begin{lemma}\label{lem:conv}
Let $\{\alpha_n\}_{n\in\N}\subset (0,1]$ be any sequence such that $\alpha_n\to 0$ as $n\to \infty$. Assume
that, for each $n\in \N$, $u_{0,n}\in \B_{\alpha_n}$. There exists $u_0\in \B_0$, a Leray-Hopf
weak solution $u:[0,\infty)\to \H$ to \eqref{eq:NSABS} with $u(0)=u_0$ and a subsequence $\{n_k\}_{k\in\N}$
such that
\begin{equation}\label{eq:conv1}
\lim_{k\to\infty}\d_w(S_{\alpha_{n_k}}(t)u_{0,{n_k}}, u(t))=0, \qquad \forall t\geq0.
\end{equation}
Consequently, for each $t\geq 0$,
\begin{equation}\label{eq:conv2}
\lim_{k\to\infty}\dist_w(S_{\alpha_{n_k}}(t)u_{0,{n_k}}, S(t)u_0)=0.
\end{equation}
\end{lemma}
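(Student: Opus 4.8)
The plan is to extract a limit along a subsequence using the $\alpha$-uniform bounds of Lemma~\ref{lem:estind} and to identify it as a Leray--Hopf weak solution of the Navier--Stokes equations. First I would record the compactness. Since $u_{0,n}\in\B_{\alpha_n}\subset\B_0$, the initial data are bounded in $\H$, so a subsequence satisfies $u_{0,n_k}\rightharpoonup u_0$ weakly in $\H$ with $u_0\in\B_0$ by weak closedness of the ball. Writing $u^n=S_{\alpha_n}(\cdot)u_{0,n}$, Lemma~\ref{lem:estind} bounds $\{u^n\}$ in $L^\infty(0,\infty;\H)\cap L^2_{loc}(0,\infty;\V)$ and $\{\dot u^n\}$ in $L^{4/3}_{loc}(0,\infty;\V^*)$, uniformly in $n$. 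By Banach--Alaoglu and a diagonal argument over $T\to\infty$, I pass to a further subsequence (not relabeled) with $u^{n_k}\rightharpoonup u$ weakly-$*$ in $L^\infty(0,\infty;\H)$ and weakly in $L^2_{loc}(0,\infty;\V)$, and $\dot u^{n_k}\rightharpoonup\dot u$ weakly in $L^{4/3}_{loc}(0,\infty;\V^*)$. The compact embedding $\V\hookrightarrow\H\hookrightarrow\V^*$ together with the Aubin--Lions lemma then upgrades this to strong convergence $u^{n_k}\to u$ in $L^2_{loc}(0,\infty;\H)$, and hence a.e.\ in space-time.

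Next I would prove the pointwise weak convergence \eqref{eq:conv1}, which uses only the uniform bounds. For fixed $v\in\V$, the $L^{4/3}_{loc}(\V^*)$ bound on $\dot u^{n_k}$ yields, for $0\leq s\leq t$,
\[
|\l u^{n_k}(t)-u^{n_k}(s),v\r|\leq \|v\|\int_s^t\|\dot u^{n_k}(\tau)\|_*\,\d\tau\leq c\,\|v\|\,|t-s|^{1/4},
\]
so the scalar functions $t\mapsto\l u^{n_k}(t),v\r$ are equibounded and equicontinuous. By Arzel\`a--Ascoli applied along a countable dense family in $\V$, together with a diagonal extraction, these converge uniformly on compact time intervals, and the limit must agree with $\l u(t),v\r$ by the a.e.\ limit already identified. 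Density of $\V$ in $\H$ and the uniform $\H$-bound extend this to all $v\in\H$, giving $u^{n_k}(t)\rightharpoonup u(t)$ in $\H$ for every $t\geq0$, i.e.\ \eqref{eq:conv1}; in particular $u$ admits a representative in $C([0,\infty);\H_w)$ with $u(0)=u_0$.

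It then remains to verify that $u$ is a Leray--Hopf solution. I would integrate \eqref{eq:NSVABS} tested against $v\in\V$ over $(s,t)$; the Voigt contribution is $\alpha_{n_k}^2\l A(u^{n_k}(t)-u^{n_k}(s)),v\r$, bounded by $\alpha_{n_k}^2(\|u^{n_k}(t)\|+\|u^{n_k}(s)\|)\|v\|\leq c\,\alpha_{n_k}\|v\|$ because $\alpha\|u^\alpha\|$ is uniformly bounded, hence it vanishes as $k\to\infty$. The boundary terms pass to the limit by \eqref{eq:conv1}, the linear dissipation term by weak $L^2_{loc}(\V)$ convergence, and the forcing term is constant in $k$, while the nonlinear term $\int_s^t b(u^{n_k},u^{n_k},v)\,\d\tau\to\int_s^t b(u,u,v)\,\d\tau$ follows from the strong $L^2_{loc}(\H)$ convergence combined with the weak $L^2_{loc}(\V)$ convergence. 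This yields identity~(iv). For the energy inequality~(v) I would start from the Voigt energy equality \eqref{eq:zerodiff}, test it against a nonnegative $\psi\in C_0^\infty((0,T))$, and pass to the limit: the strong $L^2_{loc}(\H)$ convergence handles $\int_0^T|u^{n_k}|^2\psi'$, the term $\alpha_{n_k}^2\int_0^T\|u^{n_k}\|^2\psi'$ vanishes because $\int_0^T\|u^{n_k}\|^2$ is uniformly bounded, the forcing term converges, and weak lower semicontinuity of the $\V$-seminorm gives $\nu\int_0^T\|u\|^2\psi\leq\liminf\nu\int_0^T\|u^{n_k}\|^2\psi$; these combine into \eqref{eq:enerineq1}.

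Thus $u$ is Leray--Hopf with $u(0)=u_0$, so $u(t)\in S(t)u_0$ for every $t\geq0$, and since $\dist_w(S_{\alpha_{n_k}}(t)u_{0,n_k},S(t)u_0)\leq\d_w(u^{n_k}(t),u(t))$, the consequence \eqref{eq:conv2} follows at once from \eqref{eq:conv1}. I expect the main obstacle to be the passage to the limit in the nonlinear term, which genuinely requires the Aubin--Lions strong compactness, and the recovery of the energy inequality as an \emph{inequality} (weak convergence only supplies lower semicontinuity of the dissipation), rather than the vanishing of the Voigt regularization, which is immediate from the uniform bounds.
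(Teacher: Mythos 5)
Your proposal is correct and follows essentially the same route as the paper: uniform $\alpha$-independent bounds from Lemma \ref{lem:estind}, weak compactness plus Aubin--Lions to extract a limit, pointwise weak convergence in $\H$ from the equicontinuity of $t\mapsto\l u^{n_k}(t),v\r$, passage to the limit in the weak formulation (with the Voigt term vanishing since $\alpha\|u^\alpha\|$ is bounded), and recovery of the energy inequality from the Voigt energy identity via weak lower semicontinuity of the dissipation. The only cosmetic differences are that you invoke Arzel\`a--Ascoli where the paper appeals directly to the weak continuity of the limit, and you handle $\int_0^T|u^{n_k}|^2\psi'$ via strong $L^2$ convergence rather than a.e.\ convergence plus dominated convergence.
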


\begin{proof}
Thanks to the weak compactness of $\B_0$ and the inclusion
$$
\B_{\alpha_n}\subset \B_0, \qquad \forall n\in\N,
$$
we can find a limit point $u_0\in \B_0$ such that, up to not relabeled subsequences,
$$
u_{0,n}\rightharpoonup u_0, \qquad \text{weakly in }\H.
$$
Moreover,
$$
\alpha_n^2 u_{0,n}\to 0, \qquad \text{strongly in }\V.
$$
Thanks to Lemma \ref{lem:estind}, we realize at once that, for every $T>0$, the sequence
of solutions $u_n(t)=S_{\alpha_n}(t)u_{0,n}$ fulfill the uniform bounds
\begin{align*}
&\{u_n\}_{n\in\N}\subset \text{bdd set of } L^\infty(0,T;\H)\cap L^2(0,T,\V),\\
&\{\dot{u}_n\}_{n\in\N}\subset \text{bdd set of } L^{4/3}(0,T,\V^*).
\end{align*}
The  Aubin-Lions lemma then ensures the existence of a function $u:[0,\infty)\to \H$ such that,
for each $T>0$,
\begin{align*}
&u\in  L^\infty(0,T;\H)\cap L^2(0,T,\V)\cap C([0,T],\H_w),\\
&\dot{u}\in  L^{4/3}(0,T,\V^*),
\end{align*}
and such that the following convergences hold (again, up to subsequences):
\begin{align*}
&u_n\rightharpoonup  u \qquad \text{weakly in }     L^2(0,T,\V),\\
&u_n\to u\qquad \text{strongly in }     L^2(0,T,\H).
\end{align*}
Notice that the above implies also weak pointwise convergence for \emph{every} $t\in [0,T]$, due to
the (weak) continuity of the limit. Precisely, for every $v\in \H$ and every $t\in [0,T]$,
\begin{equation}\label{eq:weakl2}
\l u_n(t),v\r \to \l u(t),v\r, \qquad \text{as } n\to \infty,
\end{equation}
and in particular we find $u(0)=u_0$ as well. Finally, thanks to \eqref{eq:estind1} one more time, as $n\to\infty$ we have
\begin{equation}\label{eq:goes0}
\sup_{t\in[0,T]}\alpha_n^2\|u_n(t)\| \to 0, \qquad \forall t\in[0,T],
\end{equation}
and
\begin{equation}\label{eq:goes01}
\alpha_n^2\int_0^T\|u_n(t)\|^2\d t \to 0.
\end{equation}
Our next goal is to show that $u$ satisfies
the Navier-Stokes equations in the weak sense specified in Paragraph \ref{sub:LH}. For each
$n\in \N$ and every $s,t\in [0,T]$ with $s\leq t$, there holds
\begin{align*}
	&\l u_n(t),v\r +\alpha_n^2\l A^{1/2}u_n(t),A^{1/2}v\r+ \nu\int_s^t\l A^{1/2}u_n(\tau),A^{1/2}v \r\d\tau+\int_s^t\l B(u_n(\tau),u_n(\tau)),v \r\d\tau\\
&\quad=\l u_n(s),v\r +\alpha_n^2\l A^{1/2}u_n(s),A^{1/2}v\r+ \int_s^t\l g,v \r\d\tau, \qquad \forall v\in\V;
	\end{align*}
Each term in the above equation converges, and in particular, in view of \eqref{eq:goes0}, we have
$$
\lim_{n\to\infty}\alpha_n^2\l A^{1/2}u_n(t),A^{1/2}v\r=0, \qquad \forall t\in[0,T].
$$
Therefore, we find that
\begin{align*}
\l u(t),v\r  +\nu\int_s^t\l A^{1/2}u(\tau),A^{1/2}v \r\d\tau+\int_s^t\l B(u(\tau),u(\tau)),v \r\d\tau
=\l u(s),v\r + \int_s^t\l g,v \r\d\tau,
\end{align*}
which implies that $u$ is a weak solution to the Navier-Stokes equations. It now remains
to show that $u$ satisfies the energy inequality \eqref{eq:enerineq}, which we want to
verify in the sense of distribution, as explained in \eqref{eq:enerineq1}. To this end,
we first observe that, for every $n\in\N$, the function $u_n$
satisfies the energy equation
\begin{equation}\label{eq:energyeqw}
-\frac12\int_0^T\big[|u_n(t)|^2+\alpha_n^2\|u_n(t)\|^2\big]\psi^\prime(t)\d t+\nu\int_0^T\|u_n(t)\|^2\psi(t)\d t= \int_0^T\l g,u_n(t)\r\psi(t)\d t,
\end{equation}
for every $T>0$ and every positive function $\psi\in C^\infty_0((0,T))$.
Possibly passing to a further subsequence, we can assume that
$$
|u_n(t)|^2\to |u(t)|^2 \qquad \text{as }n\to\infty,\quad \text{for almost all }t\in[0,T],
$$
and, in view of \eqref{eq:goes01}, also that
$$
\alpha_n^2\|u_n(t)\|^2\to 0 \qquad \text{as }n\to\infty,\quad \text{for almost all }t\in[0,T].
$$
The sequence of real-valued functions $\big\{(|u_n(\cdot)|^2+\alpha_n^2\|u_n(\cdot)\|^2)\psi^\prime\big\}$ is in $L^1(0,T)$
and it is essentially bounded. Therefore, by the Lebesgue dominated convergence theorem, we have
$$
\lim_{n\to\infty}\int_0^T\big[|u_n(t)|^2+\alpha_n^2\|u_n(t)\|^2\big]\psi^\prime(t)\d t=\int_0^T|u(t)|^2\psi^\prime(t)\d t.
$$
Moreover, observing that $u_n\sqrt{\psi}$ converges weakly in $L^2(0,T;\V)$ to $u\sqrt{\psi}$, there holds
$$
\int_0^T\|u(t)\|^2\psi(t)\d t\leq \liminf_{n\to\infty}\int_0^T\|u_n(t)\|^2\psi(t)\d t.
$$
Finally, it is clear that
$$
\lim_{n\to\infty}\int_0^T\l g,u_n(t)\r\psi(t)\d t=\int_0^T\l g,u(t)\r\psi(t)\d t.
$$
Taking the ``$\liminf$'' of both sides in the energy equation \eqref{eq:energyeqw}, we find the
corresponding energy inequality for $u$, namely
$$
-\frac12\int_0^T|u(t)|^2\psi^\prime(t)\d t+\nu\int_0^T\|u(t)\|^2\psi(t)\d t\leq \int_0^T\l g,u(t)\r\psi(t)\d t,
$$
concluding the proof.
\end{proof}

Consequently, we can prove

\begin{corollary}\label{cor:conv}
For every $t\geq 0$, there holds
\begin{equation}\label{eq:conv3}
\lim_{\alpha\to0}\dist_w(S_{\alpha}(t)\B_{\alpha}, S(t)\B_0)=0.
\end{equation}
\end{corollary}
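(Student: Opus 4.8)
The plan is to deduce this purely from Lemma \ref{lem:conv} via a contradiction argument, exploiting that $S_\alpha(t)$ is single-valued so that $S_\alpha(t)\B_\alpha=\{S_\alpha(t)u_0:u_0\in\B_\alpha\}$. Fix $t\geq 0$. Unwinding the definition of the Hausdorff semidistance, the claim \eqref{eq:conv3} amounts to showing that
$$
\sup_{u_0\in\B_\alpha}\ \inf_{y\in S(t)\B_0}\d_w\big(S_\alpha(t)u_0,\,y\big)\longrightarrow 0,\qquad\text{as }\alpha\to 0.
$$
First I would negate this and set up the contradiction: if it fails, then there exist $\eps>0$, a sequence $\alpha_n\to 0$ in $(0,1]$, and points $u_{0,n}\in\B_{\alpha_n}$ such that
$$
\inf_{y\in S(t)\B_0}\d_w\big(S_{\alpha_n}(t)u_{0,n},\,y\big)\geq\eps,\qquad\forall n\in\N .
$$

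Next I would feed exactly this sequence $\{\alpha_n\}$ and the corresponding initial data $\{u_{0,n}\}$ into Lemma \ref{lem:conv}. The lemma produces a limit point $u_0\in\B_0$, a Leray--Hopf weak solution $u:[0,\infty)\to\H$ to \eqref{eq:NSABS} with $u(0)=u_0$, and a subsequence $\{n_k\}$ along which
$$
\lim_{k\to\infty}\d_w\big(S_{\alpha_{n_k}}(t)u_{0,n_k},\,u(t)\big)=0 .
$$
The decisive observation is that $u(t)$ is an admissible competitor in the infimum: since $u_0\in\B_0$ and $u$ is a weak solution with $u(0)=u_0$, we have $u(t)\in S(t)u_0\subset S(t)\B_0$. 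Hence
$$
\inf_{y\in S(t)\B_0}\d_w\big(S_{\alpha_{n_k}}(t)u_{0,n_k},\,y\big)\leq \d_w\big(S_{\alpha_{n_k}}(t)u_{0,n_k},\,u(t)\big)\longrightarrow 0,
$$
which contradicts the lower bound $\eps>0$ and closes the argument.

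The genuine mathematical content has already been absorbed into Lemma \ref{lem:conv}, so I do not expect a serious obstacle here; the proof is essentially a soft packaging of subsequential convergence into a semidistance statement. The two points that must be verified carefully are, first, that the limit object $u(t)$ really lands inside the target set $S(t)\B_0$ — this is exactly why Lemma \ref{lem:conv} was stated with $u_0\in\B_0$ and with $u$ a bona fide Leray--Hopf solution, rather than merely an abstract limit — and second, that the whole argument is run at each \emph{fixed} $t$, matching the pointwise-in-$t$ formulation of the corollary. The multivaluedness of $S(t)$ causes no difficulty, since it only enlarges the set over which the infimum is taken.
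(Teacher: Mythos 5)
Your proof is correct and follows essentially the same route as the paper: a contradiction argument that negates the semidistance convergence, extracts sequences $\alpha_n\to 0$ and $u_{0,n}\in\B_{\alpha_n}$, and then invokes Lemma \ref{lem:conv} to produce a competitor $u(t)\in S(t)\B_0$ that violates the assumed lower bound. The only difference is that you spell out explicitly why $u(t)$ lies in $S(t)\B_0$, which the paper leaves implicit.
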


\begin{proof}
If not, there exists $t\geq 0$, $\eps>0$ and sequences $\alpha_n\in(0,1]$ with $\alpha_n\to 0$ as $n\to\infty$,
$u_{0,n}\in \B_{\alpha_n}$ such that,
$$
\inf_{v\in S(t)\B_0} \dist_w(S_{\alpha_n}(t)u_{0,n},v)\geq \eps, \qquad \forall n\in\N.
$$
In particular,  for every $v\in S(t)\B_0$, we necessarily have that
$$
\d_w(S_{\alpha_n}(t)u_{0,n},v)\geq \eps, \qquad \forall n\in\N.
$$
However, this is contradicted by Lemma \ref{lem:conv}.
\end{proof}
We conclude this section the proof of Theorem \ref{thm:mainweak}.

\begin{proof}[Proof of Theorem \ref{thm:mainweak}]
By the triangle inequality, we have
$$
\dist_w(\A_\alpha,\A_w)\leq \dist_w(\A_\alpha, S(t)\B_0)+\dist_w(S(t)\B_0,\A_w),
$$
for every   $\alpha \in (0,1]$ and $t>0$.
Let $\eps>0$ be arbitrarily fixed. Since $\A_w$ is the weak attractor of $S(t)$, there exists
$t_\eps>0$ such that
$$
\dist_w(S(t_\eps)\B_0,\A_w)\leq \frac{\eps}{2}.
$$
Also, since $\A_\alpha$ is invariant under $S_\alpha(t)$ and is contained in $\B_\alpha$, we have
$$
\dist_w(\A_\alpha, S(t_\eps)\B_0)=\dist_w(S_\alpha(t_\eps)\A_\alpha, S(t_\eps)\B_0)\leq \dist_w(S_\alpha(t_\eps)\B_\alpha, S(t_\eps)\B_0).
$$
Now, Corollary \ref{cor:conv} ensures the existence of some $\alpha_\eps\in (0,1]$ such that
$$
\dist_w(S_\alpha(t_\eps)\B_\alpha, S(t_\eps)\B_0)\leq \frac{\eps}{2}, \qquad \forall \alpha\leq \alpha_\eps.
$$
Thus,
$$
\dist_w(\A_\alpha,\A_w)\leq \dist_w(S_\alpha(t_\eps)\B_\alpha, S(t_\eps)\B_0)+\dist_w(S(t_\eps)\B_0,\A_w)\leq \eps, \qquad \forall \alpha\leq \alpha_\eps.
$$
Since $\eps$ was arbitrary to begin with, we can conclude that
$$
\lim_{\alpha\to0}\dist_w(\A_\alpha,\A_w)=0,
$$
as wanted.
\end{proof}

\begin{remark}
The result of Theorem \ref{thm:mainweak} can be rephrased in terms of the Hausdorff semidistance in $\V^*$ as
$$
\lim_{\alpha\to 0}\dist_{\V^*}(\A_\alpha,\A_w)=0,
$$
and even in $\V^0_\alpha$, by writing
$$
\lim_{\alpha\to 0}\dist_{\V^0_\alpha}(\A_\alpha,\A_w)=0,
$$
where $\V^0_\alpha$ is the space $\H$ endowed with the $\alpha$-dependent norm (see Section \ref{sub:mathset})
$$
\|u\|^2_{\V_\alpha^0}= \|u\|_*^2+\alpha^2|u|^2.
$$
This is a consequence of the well-known fact that $\H$-bounded sequences converge weakly if and only if they
converge strongly in $\V^*$. The use of parameter-dependent spaces has been particularly useful in the context
of singular hyperbolic-parabolic limits and their dependencies on the perturbation parameter,
see \cites{EMZ,EZM,GGMP,GMPZ,MPZ,FGMZ} and references therein.
\end{remark}

\subsection{Strong attractors}
We address here the question of convergence of the attractors $\A_\alpha$ to $\A_w$ in the strong topology of $\H$.
In particular, we derive a necessary condition for $\A_w$ to be strong in terms of the upper semicontinuity of
$\A_\alpha$ as $\alpha\to 0$, while a sufficient condition is obtained in terms of the symmetric Hausdorff distance in $L^2$.
The arguments resemble those of the previous paragraph, and only the major changes will be highlighted.
To begin, we need a ``strong'' version of Lemma \ref{lem:conv}.
\begin{lemma}\label{lem:strongconv}
Assume that $\A_w$ is the strong attractor of $S(t)$.
Let $\{\alpha_n\}_{n\in\N}\subset (0,1]$ be any sequence such that $\alpha_n\to 0$ as $n\to \infty$. Assume
that, for each $n\in \N$, $u_{0,n}\in \A_{\alpha_n}$. There exists $u_0\in \B_0$, a global Leray-Hopf
weak solution $u:(-\infty,\infty)\to \H$ to \eqref{eq:NSABS} with $u(0)=u_0$ and a subsequence $\{n_k\}_{k\in\N}$
such that
\begin{equation}\label{eq:strongconv1}
\lim_{k\to\infty}|S_{\alpha_{n_k}}(t)u_{0,{n_k}}-u(t)|=0, \qquad \forall t\in(-\infty,\infty).
\end{equation}
\end{lemma}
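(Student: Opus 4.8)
The plan is to mimic the proof of Lemma \ref{lem:conv}, now working with complete trajectories on the attractors and using the hypothesis that $\A_w$ is strong in order to upgrade weak convergence to strong convergence. First, since each $\A_{\alpha_n}$ is fully invariant under $S_{\alpha_n}(t)$, I would lift the datum $u_{0,n}\in\A_{\alpha_n}$ to a complete trajectory $u_n:\R\to\A_{\alpha_n}$ with $u_n(0)=u_{0,n}$; because $\A_{\alpha_n}\subset\B_{\alpha_n}\subset\B_0$, all these trajectories lie in $\B_0$ at every time. Applying Lemma \ref{lem:estind} with an arbitrary initial time $s$ (legitimate since $u_n(s)\in\B_{\alpha_n}$) and covering $[-T,T]$ by unit intervals yields, for each $T>0$, bounds for $\{u_n\}$ in $L^\infty(-T,T;\H)\cap L^2(-T,T;\V)$ and for $\{\dot u_n\}$ in $L^{4/3}(-T,T;\V^*)$ that are uniform in $n$. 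A diagonal Aubin-Lions extraction then produces a subsequence and a limit $u:\R\to\H$ with $u_{n_k}\rightharpoonup u$ weakly in $L^2_{loc}(\R;\V)$, $u_{n_k}\to u$ strongly in $L^2_{loc}(\R;\H)$, and $u_{n_k}(t)\rightharpoonup u(t)$ weakly in $\H$ for every $t$. Exactly as in Lemma \ref{lem:conv}, and using the analog of \eqref{eq:goes01} on $[-T,T]$ to kill the Voigt term, $u$ is a global Leray-Hopf weak solution; moreover $u(t)\in\B_0$ for all $t$ by weak lower semicontinuity of the norm, so $u$ is a complete bounded trajectory and, by the characterization of $\A_w$ in Theorem \ref{thm:weakstrong} together with time-translation invariance, $u(t)\in\A_w$ for every $t$.

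Next, since we are assuming that $\A_w$ is the strong attractor of $S(t)$, condition (iii) of Theorem \ref{thm:weakstrong} guarantees that every solution lying on $\A_w$ is strongly continuous in $\H$; in particular $t\mapsto|u(t)|$ is continuous. This is the single place where the hypothesis is used, and it is precisely what will allow me to recover $|u(t)|^2$ as a limit from nearby times.

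It remains to upgrade the pointwise weak convergence $u_{n_k}(t)\rightharpoonup u(t)$ to strong convergence, which is the heart of the matter. Since each $u_n$ is a regular Voigt trajectory, it satisfies the energy equality
\[
E_n(t)+2\nu\int_s^t\|u_n\|^2\,\d\tau=E_n(s)+2\int_s^t\l g,u_n\r\,\d\tau,\qquad E_n=|u_n|^2+\alpha_n^2\|u_n\|^2,
\]
and dropping the nonnegative term $\alpha_n^2\|u_n(t)\|^2$ gives an upper bound for $|u_n(t)|^2$. I would then pass to $\limsup_k$ along times $s<t$ chosen in the full-measure set where both $|u_{n_k}(s)|\to|u(s)|$ (from strong $L^2_{loc}(\H)$ convergence) and $\alpha_{n_k}^2\|u_{n_k}(s)\|^2\to0$ (possible since $\int_{-T}^T\alpha_n^2\|u_n\|^2\,\d\tau\to0$, cf. \eqref{eq:goes01}), using weak lower semicontinuity for the dissipation integral. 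This yields
\[
\limsup_k|u_{n_k}(t)|^2\leq|u(s)|^2-2\nu\int_s^t\|u\|^2\,\d\tau+2\int_s^t\l g,u\r\,\d\tau.
\]
Letting $s\uparrow t$ through such good times and invoking the strong continuity of $u$ collapses the right-hand side to $|u(t)|^2$, so $\limsup_k|u_{n_k}(t)|^2\leq|u(t)|^2$. Combined with the bound $|u(t)|\leq\liminf_k|u_{n_k}(t)|$, this forces $|u_{n_k}(t)|\to|u(t)|$, and norm convergence together with weak convergence gives $u_{n_k}(t)\to u(t)$ strongly in $\H$ for every $t\in\R$.

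The main obstacle is exactly this last step: without the strong attractor hypothesis the limit $u$ need not be strongly continuous, and the passage $s\uparrow t$ would only return $\liminf_{s\uparrow t}|u(s)|^2\leq|u(t)|^2$, which is insufficient to close the $\limsup$ estimate. The remaining care is in selecting times $s$ that are simultaneously good for the $\H$-convergence and for the vanishing of the Voigt correction $\alpha_n^2\|u_n(s)\|^2$, and in carrying out all these refinements along a single subsequence, so that the weak pointwise convergence established in the first step is preserved.
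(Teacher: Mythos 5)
Your proposal is correct, and its first half coincides with the paper's proof: both lift the data to complete trajectories $u_n(t)=S_{\alpha_n}(t)u_{0,n}\in\A_{\alpha_n}\subset\B_0$, rerun the compactness argument of Lemma \ref{lem:conv} on $[-T,T]$ with a diagonal extraction, identify the limit $u$ as a global Leray--Hopf solution lying in $\B_0$ for all times (hence on $\A_w$, hence strongly continuous by hypothesis). Where you genuinely diverge is in the final upgrade from weak to strong pointwise convergence. The paper argues more briefly: from the strong $L^2_{loc}(\H)$ convergence it extracts a further subsequence with $u_{n_k}(t)\to u(t)$ strongly for almost every $t$, and then asserts that the strong continuity of the $u_{n_k}$ and of $u$ forces convergence at \emph{every} $t$. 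You instead run the classical energy method: the exact Voigt energy equality, evaluated from a ``good'' time $s<t$ (good for both the a.e.\ strong $\H$-convergence and the vanishing of $\alpha_{n_k}^2\|u_{n_k}(s)\|^2$), weak lower semicontinuity of the dissipation, and then $s\uparrow t$ using the strong continuity of $u$, yielding $\limsup_k|u_{n_k}(t)|^2\le|u(t)|^2$ and hence norm convergence. Your route costs a little more bookkeeping (keeping all refinements on one subsequence, choosing good times densely below each $t$), but it buys a fully self-contained justification of the everywhere-convergence step: a.e.\ convergence of continuous functions to a continuous limit does not by itself imply everywhere convergence, so the paper's concluding sentence is implicitly relying on exactly the kind of energy estimate you make explicit. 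Both proofs use the strong-attractor hypothesis in the same single place, namely to secure the strong continuity of the limit trajectory.
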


\begin{proof}
Since $\A_w$ is the strong attractor of the Navier-Stokes equations, Theorem \ref{thm:weakstrong}
guarantees that all solutions on $\A_w$ are strongly continuous in $\H$. This means that every Leray-Hopf
solution $u:(-\infty,\infty)\to \H$ with $u(t)\in \B_0$ for each $t\in\R$ is strongly continuous.

By the invariance of the attractors $\A_{\alpha_n}$ with respect to $S_{\alpha_{n}}(t)$, we deduce (for each $n\in\N$) the existence
of a global solution $u_n:(-\infty,\infty)\to \V_{\alpha_n}$ to \eqref{eq:NSVABS}, with $u_n(t)=S_{\alpha_n}(t)u_{0,n}\in \A_{\alpha_n}$
for every $t\in\R$. The fact that $\A_{\alpha_n}\subset \B_{\alpha_n}\subset\B_0$ yields the uniform bound
\begin{equation}\label{eq:boundinH}
\sup_{n\in\N} |u_n(t)|^2\leq \frac{2(1+\lambda_1)|g|^2}{\nu^2\lambda_1^2},  \qquad \forall t\in\R.
\end{equation}
Repeating word for word the proof of Lemma  \ref{lem:conv}, we find that there exist a
global Leray-Hopf solution $u:(-\infty,\infty)\to \H$ to \eqref{eq:NSABS}
with $u(0)=u_0$ and subsequence $\{n_k\}_{k\in\N}$ such that
$$
\lim_{k\to\infty}\d_w(u_{n_k}(t), u(t))=0, \qquad \forall t\in\R.
$$
In particular,
$$
|u(t)|^2\leq \liminf_{k\to\infty}|u_{n_k}(t)|^2\leq \frac{2(1+\lambda_1)|g|^2}{\nu^2\lambda_1^2},
$$
which implies that $u(t)\in \B_0$ for every $t\in\R$, so that $u(\cdot)$ is in fact a solution on the global attractor $\A_w$
and therefore \emph{strongly} continuous by assumption. Moreover, it is not hard to see
that, possibly up to passing to a further subsequence, $u_{n_k}(t)\to u(t)$ strongly in $\H$ for almost every $t\in \R$.
Since both the sequence and the limit are strongly continuous function, this forces everywhere convergence, namely
$$
\lim_{k\to\infty}|u_{0,{n_k}}(t)-u(t)|=0, \qquad \forall t\in(-\infty,\infty),
$$
concluding the proof.
\end{proof}
The above Lemma \ref{lem:strongconv} slightly differs from Lemma \ref{lem:conv}. Here, we only know that global
solutions on the Navier-Stokes attractor are strongly continuous, and therefore the statement involves initial
conditions $u_{0,n}\in \A_{\alpha_n}$ rather than emanating from the whole absorbing set $\B_{\alpha_n}$.

As before, this implies a similar version of Corollary \ref{cor:conv}, and it is here stated only for positive times.
The proof is left to the interested reader.
\begin{corollary}\label{cor:strongconv}
Assume that $\A_w$ is the strong attractor of $S(t)$. For every $t\geq 0$, there holds
$$
\lim_{\alpha\to0}\dist_\H(S_{\alpha}(t)\A_{\alpha}, S(t)\B_0)=0.
$$
\end{corollary}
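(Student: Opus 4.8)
The plan is to mimic the contradiction argument used for Corollary \ref{cor:conv}, replacing the weak convergence statement of Lemma \ref{lem:conv} by its strong counterpart, Lemma \ref{lem:strongconv}, which is available precisely because we now assume $\A_w$ to be the strong attractor of $S(t)$.

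First I would argue by contradiction. If the claimed limit fails for some fixed $t\geq 0$, then there exist $\eps>0$ and a sequence $\alpha_n\in(0,1]$ with $\alpha_n\to 0$ such that $\dist_\H(S_{\alpha_n}(t)\A_{\alpha_n}, S(t)\B_0)\geq \eps$ for all $n$. Since the Hausdorff semidistance is a supremum over the first set, I can select, for each $n$, a point $b_n\in S_{\alpha_n}(t)\A_{\alpha_n}$ with $\inf_{c\in S(t)\B_0}|b_n-c|\geq \eps/2$. By the very definition of $b_n$ as an element of the image $S_{\alpha_n}(t)\A_{\alpha_n}$, we may write $b_n=S_{\alpha_n}(t)u_{0,n}$ for some $u_{0,n}\in\A_{\alpha_n}$, so that the initial data fall into exactly the regime covered by Lemma \ref{lem:strongconv}.

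Next I would apply Lemma \ref{lem:strongconv} to the sequence $\{u_{0,n}\}$: since $\A_w$ is strong, it produces $u_0\in\B_0$, a global Leray-Hopf weak solution $u:(-\infty,\infty)\to\H$ with $u(0)=u_0$, and a subsequence $\{n_k\}$ along which $|S_{\alpha_{n_k}}(t)u_{0,n_k}-u(t)|\to 0$, that is, $|b_{n_k}-u(t)|\to 0$. The restriction of $u$ to $[0,\infty)$ is still a Leray-Hopf weak solution with $u(0)=u_0\in\B_0$, whence $u(t)\in S(t)u_0\subset S(t)\B_0$ by the definition of the multivalued semiflow. Consequently $\inf_{c\in S(t)\B_0}|b_{n_k}-c|\leq|b_{n_k}-u(t)|\to 0$, which contradicts the bound $\inf_{c\in S(t)\B_0}|b_n-c|\geq\eps/2$ and closes the argument.

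The genuine content of the statement resides in Lemma \ref{lem:strongconv}, so I do not expect a serious obstacle here; the corollary is a repackaging of that lemma through the one-sided nature of the Hausdorff semidistance. The only points requiring care are the extraction of a near-maximizing point $b_n$, which is harmless since $S_{\alpha_n}(t)\A_{\alpha_n}=\A_{\alpha_n}$ is compact by invariance and the supremum is actually attained, and the verification that the limit value $u(t)$ lands in $S(t)\B_0$, which rests on the hypothesis $u_0\in\B_0$ together with the admissibility of restrictions of global solutions. It is worth emphasizing that the strong-attractor assumption enters only through Lemma \ref{lem:strongconv}, where it upgrades almost-everywhere strong convergence to convergence for every $t$ via the strong continuity of solutions lying on $\A_w$ guaranteed by Theorem \ref{thm:weakstrong}.
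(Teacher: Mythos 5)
Your proof is correct and is essentially the argument the paper intends: the paper explicitly leaves this corollary to the reader as a ``similar version of Corollary \ref{cor:conv}'', whose proof is exactly the contradiction argument you reproduce, with Lemma \ref{lem:conv} replaced by Lemma \ref{lem:strongconv} and the weak metric replaced by the norm of $\H$. Your additional checks (that the restriction of the global solution to $[0,\infty)$ places $u(t)$ in $S(t)\B_0$, and that near-maximizing points $b_n=S_{\alpha_n}(t)u_{0,n}$ with $u_{0,n}\in\A_{\alpha_n}$ can be selected) are exactly the right details and are consistent with the paper's setup.
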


Following line by line the proof of Theorem \ref{thm:mainweak}, we infer a new necessary condition for the weak global attractor to be strong.

\begin{proposition}
Assume that the weak attractor $\A_w$ to the Navier-Stokes equations is strong. Then
\begin{equation}\label{eq:strongAttr}
\lim_{\alpha\to 0}\dist_{\H}(\A_\alpha,\A_w)=0.
\end{equation}
\end{proposition}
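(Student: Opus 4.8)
The plan is to mirror the proof of Theorem~\ref{thm:mainweak} verbatim, simply replacing the weak metric $\d_w$ by the strong $\H$-metric throughout and substituting the strong-convergence machinery (Lemma~\ref{lem:strongconv} and, crucially, Corollary~\ref{cor:strongconv}) for its weak counterparts. The starting point is the triangle inequality for the Hausdorff semidistance, now taken in $\H$:
$$
\dist_\H(\A_\alpha,\A_w)\leq \dist_\H(\A_\alpha, S(t)\B_0)+\dist_\H(S(t)\B_0,\A_w),
$$
valid for every $\alpha\in(0,1]$ and every $t>0$. The goal is to make each summand on the right at most $\eps/2$ by fixing a suitably large time $t$ and then taking $\alpha$ small enough.

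First I would control the second term using the standing hypothesis that $\A_w$ is the \emph{strong} attractor of $S(t)$. By definition this means $\A_w$ attracts the bounded absorbing set $\B_0$ in the strong topology of $\H$, so $\dist_\H(S(t)\B_0,\A_w)\to0$ as $t\to\infty$; hence, for any prescribed $\eps>0$ one may choose $t_\eps>0$ with $\dist_\H(S(t_\eps)\B_0,\A_w)\leq \eps/2$. For the first term, I would then invoke the full invariance of the Voigt attractor, $\A_\alpha=S_\alpha(t_\eps)\A_\alpha$, to rewrite
$$
\dist_\H(\A_\alpha,S(t_\eps)\B_0)=\dist_\H(S_\alpha(t_\eps)\A_\alpha,S(t_\eps)\B_0),
$$
at which point Corollary~\ref{cor:strongconv} applies directly and gives $\lim_{\alpha\to0}\dist_\H(S_\alpha(t_\eps)\A_\alpha,S(t_\eps)\B_0)=0$. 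Thus there exists $\alpha_\eps\in(0,1]$ such that $\dist_\H(S_\alpha(t_\eps)\A_\alpha,S(t_\eps)\B_0)\leq\eps/2$ for all $\alpha\leq\alpha_\eps$. Combining the two bounds yields $\dist_\H(\A_\alpha,\A_w)\leq\eps$ for every $\alpha\leq\alpha_\eps$, and since $\eps$ was arbitrary the desired conclusion~\eqref{eq:strongAttr} follows.

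The one structural point that deserves attention, and the reason the argument is not a literal copy of the weak case, is that here I must work with $S_\alpha(t_\eps)\A_\alpha$ rather than $S_\alpha(t_\eps)\B_\alpha$. The strong convergence furnished by Lemma~\ref{lem:strongconv} is available only for sequences of initial data drawn from the attractors $\A_{\alpha_n}$, because the proof hinges on the strong continuity of limiting Leray--Hopf trajectories lying on $\A_w$, which Theorem~\ref{thm:weakstrong} guarantees only for complete solutions confined to $\B_0$. Using the invariance identity $\A_\alpha=S_\alpha(t_\eps)\A_\alpha$ sidesteps this restriction cleanly. Consequently I do not expect any genuine analytical obstacle: all the hard work has already been done in establishing Lemma~\ref{lem:strongconv} and Corollary~\ref{cor:strongconv}, and the remaining task is the purely formal assembly described above.
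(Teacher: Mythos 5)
Your proof is correct and follows essentially the same route as the paper, which simply states that the proposition is obtained by following line by line the proof of Theorem \ref{thm:mainweak}, with Corollary \ref{cor:strongconv} replacing Corollary \ref{cor:conv}. Your observation that one must work with $S_\alpha(t_\eps)\A_\alpha$ directly (rather than enlarging to $S_\alpha(t_\eps)\B_\alpha$ as in the weak case) is exactly the right adjustment and matches the way Corollary \ref{cor:strongconv} is formulated.
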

One may wonder if the converse is also true, namely, does the convergence $\A_\alpha\to\A_w$ as $\alpha\to 0$ implies that
$\A_w$ is the strong attractor of the Navier-Stokes equations? A partial answer is given by the following trivial observation.

\begin{proposition}
Assume that
\begin{equation}\label{eq:strongAttr2}
\lim_{\alpha\to 0}\max\left\{\dist_{\H}(\A_\alpha,\A_w),\dist_{\H}(\A_w,\A_\alpha)\right\}=0.
\end{equation}
Then the weak attractor $\A_w$ to the Navier-Stokes equations is strong.
\end{proposition}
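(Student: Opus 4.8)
The plan is to reduce everything to showing that the weak attractor $\A_w$ is strongly compact in $\H$; once this is established, the conclusion is immediate from the equivalence (i) $\Leftrightarrow$ (ii) in Theorem \ref{thm:weakstrong}. The guiding observation is that, for each fixed $\alpha\in(0,1]$, the global attractor $\A_\alpha$ is compact in $\V_\alpha$, hence \emph{strongly} compact in $\H$ via the continuous embedding $\V_\alpha\hookrightarrow\H$ (the norm $\|u\|_{\V_\alpha}^2=|u|^2+\alpha^2\|u\|^2$ dominates $|u|^2$). Thus hypothesis \eqref{eq:strongAttr2} presents $\A_w$ as a strong-Hausdorff limit of strongly compact sets, and in the complete metric space $\H$ such a limit is forced to be compact.

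First I would record that $\A_w$ is strongly closed in $\H$. By Theorem \ref{thm:weakstrong} it is weakly compact, hence weakly closed; since the weak topology of $\H$ is coarser than the strong one, every weakly closed set is strongly closed, and in particular so is $\A_w$.

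Next I would verify that $\A_w$ is totally bounded for the strong metric of $\H$. Fix $\eps>0$. Using the semidistance $\dist_{\H}(\A_w,\A_\alpha)$ appearing in \eqref{eq:strongAttr2}, choose $\alpha$ small enough that $\dist_{\H}(\A_w,\A_\alpha)<\eps/2$, so that every point of $\A_w$ lies within $\eps/2$ of some point of $\A_\alpha$. Since $\A_\alpha$ is strongly compact it is totally bounded, hence admits a finite cover by $\H$-balls of radius $\eps/2$ centered at points $c_1,\dots,c_m$. A single application of the triangle inequality then shows that the $\H$-balls of radius $\eps$ centered at $c_1,\dots,c_m$ cover $\A_w$. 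As $\eps>0$ was arbitrary, $\A_w$ is totally bounded in the strong metric.

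Since $\H$ is complete and $\A_w$ is strongly closed and totally bounded, $\A_w$ is strongly compact. Invoking the equivalence (i) $\Leftrightarrow$ (ii) in Theorem \ref{thm:weakstrong}, strong compactness of $\A_w$ is the same as $\A_w$ being the strong attractor of $S(t)$, which is the desired assertion. The argument is entirely soft, as the ``trivial observation'' label suggests; the only point deserving care is the passage from Hausdorff convergence to total boundedness, where one crucially uses the total boundedness of the \emph{approximating} attractors $\A_\alpha$ rather than of $\A_w$ itself. Note that only the single semidistance $\dist_{\H}(\A_w,\A_\alpha)\to 0$ enters this direction, so the other half of the symmetric distance in \eqref{eq:strongAttr2} is in fact superfluous for the proof.
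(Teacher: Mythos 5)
Your proof is correct and follows essentially the same route as the paper: both arguments reduce the claim to the strong compactness of $\A_w$ in $\H$, deduce it from the compactness of the approximating attractors $\A_\alpha$ together with the Hausdorff convergence, and then invoke the equivalence (i) $\Leftrightarrow$ (ii) of Theorem \ref{thm:weakstrong}. The only difference is cosmetic: you carry out the total-boundedness $\eps/2$-net argument by hand, whereas the paper packages the same fact as the Lipschitz continuity of the Kuratowski measure of noncompactness with respect to the symmetric Hausdorff distance; your side remark that only the semidistance $\dist_\H(\A_w,\A_\alpha)$ is actually needed is also accurate.
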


\begin{proof}
Condition \eqref{eq:strongAttr2} immediately implies that $\A_w$ is strongly compact. Indeed, recall that
the Kuratowski measure of noncompacteness $\kur(\A_w)$ in $\H$ of $A_w$ is defined as
$$
\kur(\A_w)=\inf\big\{\delta\, :\, \A_w\ \text{has a finite cover by balls of } \H \text{ of diameter less than } \delta\big\}.
$$
It is well known that the Kuratowski measure is Lipschitz-continuous with respect to the symmetric
Hausdorff distance between sets. Therefore, since for each $\alpha\in (0,1]$ the attractors $\A_\alpha$
are compact in $\H$ (namely, $\kur(\A_\alpha)=0$), we have
$$
\kur(\A_w)=|\kur(\A_w)-\kur(\A_\alpha)|\leq c\max\left\{\dist_{\H}(\A_\alpha,\A_w),\dist_{\H}(\A_w,\A_\alpha)\right\}, \qquad \forall \alpha\in (0,1].
$$
In view of  \eqref{eq:strongAttr2}, we find $\kur(\A_w)=0$ as well. Hence, $\A_w$ is strongly compact in $\H$ and the conclusion
follows by Theorem \ref{thm:weakstrong}.
\end{proof}

\appendix

\section{Exponential attractors}\label{app:exp}
\noindent We here recall a result, first devised in \cite{EMZ}. Let $Y\Subset X$ be compactly embedded Banach
spaces, and let $S(t):X\to X$ be a semigroup. Finally, let $\B\subset X$ be a bounded closed invariant set.
\begin{theorem}\label{lem:suffexp}
Assume there exists a time $t_*>0$ such that the following hold:
\begin{enumerate}
	\item the map
	$$
	(t,x)\mapsto S(t)x: [0,t_*]\times \B\to \B
	$$
	is Lipschitz continuous (with the metric inherited from $X$);
	
	\item the map $S(t_*):\B\to \B$ admits a decomposition of the form
	$$
	S(t_*)=V+W, \qquad V:\B\to X, \qquad W:\B\to Y,
	$$
	where $V$ and $W$ satisfy the conditions
	$$
	\| V(x_1)-V(x_2)\|_X\leq \frac18\|x_1-x_2\|_X, \qquad \forall x_1,x_2\in \B,
	$$
	and
	$$
	\| W(x_1)-W(x_2)\|_Y\leq C_*\|x_1-x_2\|_X,  \qquad \forall x_1,x_2\in \B,
	$$
	for some $C_*>0$.
\end{enumerate}
Then there exist an invariant compact set $\E\subset X$ such that
$$
\dist_X(S(t)\B,\E)\leq J_0\e^{-\frac{\log 2}{t_*}t},
$$
where
$$
J_0= 2L_*\sup_{x\in \B}\|x\|_X\e^{\frac{\log 2}{t_*}}
$$
and $L_*$ is the Lipschitz constant of the map $S(t_*):\B\to \B$. Moreover, the fractal dimension of $\E$
can be estimated as
$$
\dim_X \E\leq 1+\frac{\log N_*}{\log 2},
$$
where $N_*$ is the minimum number of $\frac{1}{8 C_*}$-balls of $X$ necessary to cover
the unit ball of $Y$.
\end{theorem}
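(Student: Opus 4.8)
The plan is to pass first to the discrete dynamical system generated by the single map $S:=S(t_*)$ on $\B$, construct a discrete exponential attractor for it via an iterated covering argument, and then lift back to continuous time using the Lipschitz assumption (1). As a preliminary observation, condition (2) together with the compact embedding $Y\Subset X$ (which yields $\|\cdot\|_X\leq c\|\cdot\|_Y$) shows that $S$ is Lipschitz on $\B$, so that fractal dimension is preserved under all of its iterates; this is what makes the subsequent dimension bookkeeping legitimate.

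The core is a covering (squeezing) estimate. Suppose a subset of $\B$ is contained in a single $X$-ball of radius $r$ centered at some $x_0\in\B$. For any $x$ in this set I write $S(x)=V(x)+W(x)$. The contraction property gives $\|V(x)-V(x_0)\|_X\leq \frac{r}{8}$, while the smoothing property gives $W(x)\in B_Y(W(x_0),C_*r)$. By the very definition of $N_*$ and a scaling argument, the $Y$-ball $B_Y(W(x_0),C_*r)$ is covered by $N_*$ balls of $X$ of radius $C_*r\cdot\frac{1}{8C_*}=\frac{r}{8}$. Combining the two contributions, $S(x)$ lies within $\frac{r}{8}+\frac{r}{8}=\frac{r}{4}$ in $X$ of one of $N_*$ points; re-centering each ball at a point of the invariant set $\B$, at the cost of doubling the radius, shows that the image is covered by $N_*$ balls of $X$ of radius $\frac{r}{2}$ with centers in $\B$. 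This halving of the radius at the price of a factor $N_*$ in the number of balls is the heart of the argument, and I expect the bookkeeping here (the scaling of the $Y$-cover against the definition of $N_*$, and the re-centering into $\B$) to be the main technical point.

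Iterating this estimate starting from a single ball of radius $R_0:=\sup_{x\in\B}\|x\|_X$ covering $\B$, I obtain for every $n$ a cover of $S^n\B$ by $N_*^n$ balls of radius $R_0 2^{-n}$ with centers $E_n\subset\B$, whence at once the discrete attraction $\dist_X(S^n\B,\bigcup_m E_m)\leq R_0 2^{-n}$. Setting $\mathcal{E}_{d}^*=\bigcup_{n\geq0}E_n$ and using positive invariance $S^m\B\subset S^n\B$ for $m\geq n$ (so that every center of level $m\geq n$ already lies in the level-$n$ cover), I count $N(\eps,\mathcal{E}_{d}^*)\lesssim N_*^n$ for $\eps\sim R_0 2^{-n}$, which gives $\dim_X\mathcal{E}_{d}^*\leq \frac{\log N_*}{\log 2}$. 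Taking the closure of the forward orbit $\mathcal{E}_{d}=\overline{\bigcup_{n\geq0}S^n\mathcal{E}_{d}^*}$ produces a positively invariant compact set of the same dimension, since $S$ is Lipschitz.

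Finally I lift to continuous time by setting $\E=\overline{\bigcup_{t\in[0,t_*]}S(t)\mathcal{E}_{d}}$. Assumption (1) guarantees that this set is compact and positively invariant, and, since $\E$ is the image of $\mathcal{E}_{d}\times[0,t_*]$ under the Lipschitz map $(x,t)\mapsto S(t)x$ and $[0,t_*]$ is one-dimensional, it adds at most one to the fractal dimension, giving $\dim_X\E\leq 1+\frac{\log N_*}{\log 2}$. For the continuous attraction rate I write an arbitrary $t\geq0$ as $t=nt_*+s$ with $s\in[0,t_*)$ and combine the discrete bound $\dist_X(S^n\B,\mathcal{E}_{d})\leq R_0 2^{-n}$ with the Lipschitz constant $L_*$ of $S(t_*):\B\to\B$; this yields the stated estimate $\dist_X(S(t)\B,\E)\leq J_0\e^{-\frac{\log 2}{t_*}t}$ with $J_0=2L_*R_0\e^{\frac{\log 2}{t_*}}$, where the factor $2$, the exponential $\e^{\frac{\log 2}{t_*}}$, and $L_*$ are precisely the conversion constants bridging the discrete times $nt_*$ and the intermediate times.
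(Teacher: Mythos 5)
The paper does not actually prove Theorem \ref{lem:suffexp}: it is stated in the appendix and attributed to \cite{EMZ}, so there is no in-paper argument to compare against. Your proposal reconstructs essentially the construction of that reference, and it is correct in all essential respects: the squeezing step (covering $B_Y(W(x_0),C_*r)$ by $N_*$ balls of $X$ of radius $r/8$ via the scaling built into the definition of $N_*$, adding the $r/8$ contribution from the contraction $V$, and re-centering at the cost of a factor $2$) is exactly the engine of the proof, and the discrete-to-continuous lift through assumption (1) and the product with $[0,t_*]$ is standard. Two points deserve more care. First, the re-centering should place the new centers in the image $S(E)$ (hence at level $m$ in $S^m\B$), not merely in $\B$; you use this implicitly when you argue that every center of level $m\geq n$ lies in $S^n\B$, so the statement of the covering lemma should be sharpened accordingly. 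Second, and more substantively, the assertion that $\overline{\bigcup_{n\geq 0}S^n\mathcal{E}_d^*}$ has the same fractal dimension ``since $S$ is Lipschitz'' is not a valid inference as written: box-counting dimension is not stable under countable unions, and Lipschitz invariance only controls each $S^n\mathcal{E}_d^*$ separately. The claim is nevertheless true here, because $S^nE_m$ is a finite set of at most $N_*^m$ points contained in $S^{n+m}\B$, so for $\eps\sim R_0 2^{-k}$ all pieces with $n+m\geq k$ are absorbed by the $N_*^k$-ball cover of $S^k\B$ while the remaining finitely many pieces contribute only $O(kN_*^k)$ additional points, a factor that disappears in the $\limsup$. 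The cleaner route, and the one taken in \cite{EMZ}, is to build positive invariance into the recursion by setting $E_{k+1}=S(E_k)\cup V_{k+1}$, which makes the a posteriori orbit closure unnecessary. With either repair (and with the harmless replacement of your initial ball by one of radius $2R_0$ centered at a point of $\B$, so that the decomposition can be evaluated at the center) the proof is complete.
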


\noindent {\textbf{Acknowledgments.}}
The first author would like to express his gratitude to Professors H. Bercovici and E.S. Titi for
being a continuous source of inspiration. The work of the first author was also supported in part by
the NSF Grant DMS 1206438, and by the Research Fund of Indiana University.


\begin{bibdiv}
\begin{biblist}

\bib{BA}{article}{
   author={Ball, J. M.},
   title={Continuity properties and global attractors of generalized
   semiflows and the Navier-Stokes equations},
   journal={J. Nonlinear Sci.},
   volume={7},
   date={1997},
   pages={475--502},
}

\bib{CTV}{article}{
   author={Chepyzhov, Vladimir V.},
   author={Titi, Edriss S.},
   author={Vishik, Mark I.},
   title={On the convergence of solutions of the Leray-$\alpha$ model to the
   trajectory attractor of the 3D Navier-Stokes system},
   journal={Discrete Contin. Dyn. Syst.},
   volume={17},
   date={2007},
   pages={481--500},
}

\bib{CV}{book}{
   author={Chepyzhov, Vladimir V.},
   author={Vishik, Mark I.},
   title={Attractors for equations of mathematical physics},
   publisher={American Mathematical Society},
   place={Providence},
   date={2002},
}

\bib{CHES}{article}{
   author={Cheskidov, Alexey},
   title={Global attractors of evolutionary systems},
   journal={J. Dynam. Differential Equations},
   volume={21},
   date={2009},
   pages={249--268},
}

\bib{CF}{article}{
   author={Cheskidov, A.},
   author={Foias, C.},
   title={On global attractors of the 3D Navier-Stokes equations},
   journal={J. Differential Equations},
   volume={231},
   date={2006},
   pages={714--754},
}

\bib{CONFO}{article}{
   author={Constantin, P.},
   author={Foias, C.},
   title={Global Lyapunov exponents, Kaplan-Yorke formulas and the dimension of the attractors for $2$D Navier-Stokes equations},
   journal={Comm. Pure Appl. Math.},
   volume={38},
   date={1985},
   pages={1--27},
}

\bib{COST}{book}{
   author={Constantin, Peter},
   author={Foias, Ciprian},
   title={Navier-Stokes equations},
   series={Chicago Lectures in Mathematics},
   publisher={University of Chicago Press},
   place={Chicago, IL},
   date={1988},
}

\bib{CFMT}{article}{
   author={Constantin, P.},
   author={Foias, C.},
   author={Manley, O. P.},
   author={Temam, R.},
   title={Determining modes and fractal dimension of turbulent flows},
   journal={J. Fluid Mech.},
   volume={150},
   date={1985},
   pages={427--440},
}

\bib{CFT}{article}{
   author={Constantin, P.},
   author={Foias, C.},
   author={Temam, R.},
   title={Attractors representing turbulent flows},
   journal={Mem. Amer. Math. Soc.},
   volume={53},
   date={1985},
   pages={vii+67},
}

\bib{CZ}{article}{
   author={Coti Zelati, Michele},
   title={On the theory of global attractors and Lyapunov functionals},
   journal={Set-Valued Var. Anal.},
   volume={21},
   date={2013},
   pages={127--149},
}

\bib{Eden1}{article}{
   author={Eden, A.},
   author={Foias, C.},
   author={Nicolaenko, B.},
   title={Exponential attractors of optimal Lyapunov dimension for Navier-Stokes equations},
   journal={J. Dynam. Differential Equations},
   volume={6},
   date={1994},
   pages={301--323},
}

\bib{Eden2}{article}{
   author={Eden, A.},
   author={Foias, C.},
   author={Kalantarov, V.},
   title={A remark for two constructions of exponential attractors for $\alpha$-contractions},
   journal={J. Dynam. Differential Equations},
   volume={10},
   date={1998},
   pages={37--45},
}

\bib{EFNT}{book}{
   author={Eden, A.},
   author={Foias, C.},
   author={Nicolaenko, B.},
   author={Temam, R.},
   title={Exponential attractors for dissipative evolution equations},
   series={RAM: Research in Applied Mathematics},
   publisher={Masson},
   place={Paris},
   date={1994},
}

\bib{EMZ}{article}{
   author={Efendiev, Messoud},
   author={Miranville, Alain},
   author={Zelik, Sergey},
   title={Exponential attractors for a nonlinear reaction-diffusion system in ${\bf R}^3$},
   journal={C. R. Acad. Sci. Paris S\'er. I Math.},
   volume={330},
   date={2000},
   pages={713--718},
}

\bib{EZM}{article}{
   author={Efendiev, M.},
   author={Miranville, A.},
   author={Zelik, S.},
   title={Exponential attractors and finite-dimensional reduction for
   non-autonomous dynamical systems},
   journal={Proc. Roy. Soc. Edinburgh Sect. A},
   volume={135},
   date={2005},
   pages={703--730},
}

\bib{FGMZ}{article}{
   author={Fabrie, Pierre},
   author={Galusinski, Cedric},
   author={Miranville, Alain},
   author={Zelik, Sergey},
   title={Uniform exponential attractors for a singularly perturbed damped wave equation},
   journal={Discrete Contin. Dyn. Syst.},
   volume={10},
   date={2004},
   pages={211--238},
}

\bib{FRT}{article}{
   author={Foias, Ciprian},
   author={Rosa, Ricardo},
   author={Temam, Roger},
   title={Topological properties of the weak global attractor of the
   three-dimensional Navier-Stokes equations},
   journal={Discrete Contin. Dyn. Syst.},
   volume={27},
   date={2010},
   pages={1611--1631},
}

\bib{FRT2}{article}{
   author={Foias, Ciprian},
   author={Rosa, Ricardo},
   author={Temam, Roger},
   title={Properties of time-dependent statistical solutions of the three-dimensional Navier-Stokes equations},
   journal={Ann. Inst. Fourier (Grenoble), to appear},
}

\bib{FMRT}{book}{
   author={Foias, C.},
   author={Manley, O.},
   author={Rosa, R.},
   author={Temam, R.},
   title={Navier-Stokes equations and turbulence},
   publisher={Cambridge University Press},
   place={Cambridge},
   date={2001},
}

\bib{FT}{article}{
   author={Foias, Ciprian},
   author={Temam, Roger},
   title={The connection between the Navier-Stokes equations, dynamical
   systems, and turbulence theory},
   conference={
      title={Directions in partial differential equations},
      address={Madison, WI},
      date={1985},
   },
   book={
      series={Publ. Math. Res. Center Univ. Wisconsin},
      volume={54},
      publisher={Academic Press},
      place={Boston, MA},
   },
   date={1987},
   pages={55--73},
}

\bib{GMe}{article}{
   author={Gal, Ciprian G.},
   author={Medjo, Theodore T.},
   title={On a regularized family of models for homogeneous incompressible two-phase flows},
   journal={J. Nonlinear Science, to appear},
}

\bib{GGMP}{article}{
   author={Gatti, Stefania},
   author={Grasselli, Maurizio},
   author={Miranville, Alain},
   author={Pata, Vittorino},
   title={A construction of a robust family of exponential attractors},
   journal={Proc. Amer. Math. Soc.},
   volume={134},
   date={2006},
   pages={117--127 (electronic)},
}

\bib{GMPZ}{article}{
   author={Gatti, Stefania},
   author={Miranville, Alain},
   author={Pata, Vittorino},
   author={Zelik, Sergey},
   title={Continuous families of exponential attractors for singularly
   perturbed equations with memory},
   journal={Proc. Roy. Soc. Edinburgh Sect. A},
   volume={140},
   date={2010},
   pages={329--366},
}

\bib{GT}{article}{
   author={Gibbon, J. D.},
   author={Titi, E. S.},
   title={Attractor dimension and small length scale estimates for the
   three-dimensional Navier-Stokes equations},
   journal={Nonlinearity},
   volume={10},
   date={1997},
   pages={109--119},
}

\bib{H}{book}{
   author={Hale, J.K.},
   title={Asymptotic behavior of dissipative systems},
   publisher={American Mathematical Society},
   place={Providence},
   date={1988},
}

\bib{HLT}{article}{
   author={Holst, M.},
   author={Lunasin, E.},
   author={Tsogtgerel, G.},
   title={Analysis of a general family of regularized Navier-Stokes and MHD models},
   journal={J. Nonlinear Sci.},
   volume={20},
   date={2010},
   pages={523--567},
}

\bib{HO}{article}{
   author={Hopf, Eberhard},
   title={\"Uber die Anfangswertaufgabe f\"ur die hydrodynamischen
   Grundgleichungen},
   journal={Math. Nachr.},
   volume={4},
   date={1951},
   pages={213--231},
}

\bib{I3}{article}{
   author={Ilyin, Alexei A.},
   title={Attractors for Navier-Stokes equations in domains with finite
   measure},
   journal={Nonlinear Anal.},
   volume={27},
   date={1996},
   pages={605--616},
}

\bib{I1}{article}{
   author={Ilyin, Alexei A.},
   title={On the spectrum of the Stokes operator},
   journal={Funktsional. Anal. i Prilozhen.},
   volume={43},
   date={2009},
   pages={14--25},
}

\bib{I2}{article}{
   author={Ilyin, Alexei A.},
   title={Lower bounds for the spectrum of the Laplace and Stokes operators},
   journal={Discrete Contin. Dyn. Syst.},
   volume={28},
   date={2010},
   pages={131--146},
}

\bib{K1}{article}{
   author={Kalantarov, V. K.},
   title={Attractors for some nonlinear problems of mathematical physics},
   journal={Zap. Nau\v cn. Sem. Leningrad. Otdel. Mat. Inst. Steklov.
   (LOMI)},
   volume={152},
   date={1986},
   pages={50--54},
}

\bib{KLT}{article}{
   author={Kalantarov, Varga K.},
   author={Levant, Boris},
   author={Titi, Edriss S.},
   title={Gevrey regularity for the attractor of the 3D Navier-Stoke-Voight
   equations},
   journal={J. Nonlinear Sci.},
   volume={19},
   date={2009},
   pages={133--152},
}

\bib{KT}{article}{
   author={Kalantarov, Varga K.},
   author={Titi, Edriss S.},
   title={Global attractors and determining modes for the 3D
   Navier-Stokes-Voight equations},
   journal={Chin. Ann. Math. Ser. B},
   volume={30},
   date={2009},
   pages={697--714},
}

\bib{KV}{article}{
   author={Kapustyan, A. V.},
   author={Valero, J.},
   title={Weak and strong attractors for the 3D Navier-Stokes system},
   journal={J. Differential Equations},
   volume={240},
   date={2007},
   pages={249--278},
}

\bib{LADY}{book}{
   author={Ladyzhenskaya, Olga},
   title={Attractors for semigroups and evolution equations},
   publisher={Cambridge University Press, Cambridge},
   date={1991},
}

\bib{L}{article}{
   author={Leray, Jean},
   title={Sur le mouvement d'un liquide visqueux emplissant l'espace},
   language={French},
   journal={Acta Math.},
   volume={63},
   date={1934},
   pages={193--248},
}

\bib{LIU}{article}{
   author={Liu, Vincent Xiaosong},
   title={Remarks on the Navier-Stokes equations on the two- and
   three-dimensional torus},
   journal={Comm. Partial Differential Equations},
   volume={19},
   date={1994},
   pages={873--900},
}

\bib{MV}{article}{
   author={Melnik, Valery S.},
   author={Valero, Jos{\'e}},
   title={On attractors of multivalued semi-flows and differential
   inclusions},
   journal={Set-Valued Anal.},
   volume={6},
   date={1998},
   pages={83--111},
}

\bib{MET}{article}{
   author={M{\'e}tivier, Guy},
   title={Valeurs propres d'op\'erateurs d\'efinis par la restriction de
   syst\`emes variationnels \`a des sous-espaces},
   language={French},
   journal={J. Math. Pures Appl. (9)},
   volume={57},
   date={1978},
   pages={133--156},
}

\bib{MPZ}{article}{
   author={Miranville, Alain},
   author={Pata, Vittorino},
   author={Zelik, Sergey},
   title={Exponential attractors for singularly perturbed damped wave
   equations: a simple construction},
   journal={Asymptot. Anal.},
   volume={53},
   date={2007},
   pages={1--12},
}

\bib{Osk1}{article}{
   author={Oskolkov, A. P.},
   title={The uniqueness and solvability in the large of boundary value
   problems for the equations of motion of aqueous solutions of polymers},
   journal={Zap. Nau\v cn. Sem. Leningrad. Otdel. Mat. Inst. Steklov.
   (LOMI)},
   volume={38},
   date={1973},
   pages={98--136},
}

\bib{RS}{article}{
   author={Raugel, Genevi{\`e}ve},
   author={Sell, George R.},
   title={Navier-Stokes equations on thin 3D domains. I. Global attractors
   and global regularity of solutions},
   journal={J. Amer. Math. Soc.},
   volume={6},
   date={1993},
   pages={503--568},
}

\bib{RO}{article}{
   author={Rosa, Ricardo M. S.},
   title={Asymptotic regularity conditions for the strong convergence
   towards weak limit sets and weak attractors of the 3D Navier-Stokes
   equations},
   journal={J. Differential Equations},
   volume={229},
   date={2006},
   pages={257--269},
}

\bib{RT}{article}{
   author={Ramos, F.},
   author={Titi, E.S.},
   title={Invariant measures for the 3D Navier-Stokes-Voigt equations and their Navier-Stokes limit},
   journal={Discrete and Continuous Dynamical Systems},
   volume={28},
   date={2010},
   pages={375--403},
}

\bib{SE}{article}{
   author={Sell, George R.},
   title={Global attractors for the three-dimensional Navier-Stokes
   equations},
   journal={J. Dynam. Differential Equations},
   volume={8},
   date={1996},
   pages={1--33},
}

\bib{S}{book}{
   author={Sohr, Hermann},
   title={The Navier-Stokes equations},
   publisher={Birkh\"auser Verlag},
   place={Basel},
   date={2001},
}

\bib{T1}{book}{
   author={Temam, Roger},
   title={Navier-Stokes equations and nonlinear functional analysis},
   publisher={SIAM},
   place={Philadelphia},
   date={1995},
}

\bib{T3}{book}{
   author={Temam, Roger},
   title={Infinite-dimensional dynamical systems in mechanics and physics},
   publisher={Springer-Verlag},
   place={New York},
   date={1997},
}

\bib{T2}{book}{
   author={Temam, Roger},
   title={Navier-Stokes equations, theory and numerical analysis},
   publisher={AMS Chelsea Publishing},
   place={Providence},
   date={2001},
}

\bib{CTV2}{article}{
   author={Vishik, M. I.},
   author={Titi, E. S.},
   author={Chepyzhov, V. V.},
   title={On the convergence of trajectory attractors of the
   three-dimensional Navier-Stokes $\alpha$-model as $\alpha\to0$},
      journal={Sb. Math.},
      volume={198},
      date={2007},
      pages={1703--1736},
}

\end{biblist}
\end{bibdiv}

\end{document}